\newcounter{constant}
\newcommand{\newconstant}[1]{\refstepcounter{constant}\label{#1}}
\newcommand{\useconstant}[1]{c_{\textnormal{\tiny \ref{#1}}}}
\newcounter{bigconstant}
\newcommand{\newbigconstant}[1]{\refstepcounter{bigconstant}\label{#1}}
\newcommand{\usebigconstant}[1]{C_{\textnormal{\tiny \ref{#1}}}}
\newtheorem{teo}{Theorem}[section]
\newtheorem{prop}[teo]{Proposition}
\newtheorem{lemma}[teo]{Lemma}
\newtheorem{claim}[teo]{Claim}
\newtheorem{cor}[teo]{Corollary}
\theoremstyle{definition}
\newtheorem{ex}[teo]{Example}
\newtheorem{remark}[teo]{Remark}
\newtheorem{question}[teo]{Question}
\newcommand{\PP}{\mathbb{P}}
\newcommand{\EE}{\mathbb{E}}
\newcommand{\RR}{\mathbb{R}}
\newcommand{\NN}{\mathbb{N}}
\newcommand{\ZZ}{\mathbb{Z}}
\newcommand{\charf}[1]{\mathbf{1}_{#1}}
\DeclareMathOperator{\dist}{d}
\DeclareMathOperator{\per}{per}
\DeclareMathOperator{\poisson}{Po}
\DeclareMathOperator{\cov}{Cov}
\DeclareMathOperator{\capacity}{cap}
\begin{document}

\title{How can a clairvoyant particle escape \\ the exclusion process?}

\author{Rangel Baldasso\footnote{Email: \ baldasso@impa.br; \ IMPA, Estrada Dona Castorina 110, 22460-320,\newline Rio de Janeiro, RJ, Brazil} \and Augusto Teixeira\footnote{Email: \ augusto@impa.br; \ IMPA, Estrada Dona Castorina 110, 22460-320,\newline Rio de Janeiro, RJ, Brazil}}

\maketitle

\begin{abstract}
We study a detection problem in the following setting: On the one-dimensional integer lattice, at time zero, place nodes on each site independently with probability $\rho \in [0,1)$ and let them evolve as a simple symmetric exclusion process. At time zero, place a target at the origin. The target moves only at integer times, and can move to any site that is within distance $R$ from its current position. Assume also that the target can predict the future movement of all nodes. We prove that, for $R$ large enough (depending on the value of $\rho$) it is possible for the target to avoid detection forever with positive probability. The proof of this result uses two ingredients of independent interest. First we establish a renormalisation scheme that can be used to prove percolation for dependent oriented models under a certain decoupling condition. This result is general and does not rely on the specifities of the model. As an application, we prove our main theorem for different dynamics, such as independent random walks and independent renewal chains. We also proof existence of oriented percolation for random interlacements and for its vacant set for large dimensions . The second step of the proof is a space-time decoupling for the exclusion process.
\end{abstract}

\section{Introduction}
~

\par Suppose we are given a random set of moving points, called nodes. We think of these nodes as detectors. Suppose we also have a target that can be mobile or not. We are interested in knowing whether any of the nodes will detect the target in finite time, and if so, what are the properties of the detection time. Of course the answer to these problems depend on the specific model in question.

\par There exists a rich literature concerning this class of problems. The mobile geometric graph model is an example of structure where this has been studied. In this model, the starting positions of the nodes is given by a Poisson point process in the plane with intensity $\lambda>0$ and they evolve as independent Brownian Motions. A node detects everything that is within distance at most one from it. This model has been studied under different aspects. When the target is non-mobile, detection occurs in finite time almost surely. In this case, \cite{kkp} derive bounds on the tail distribution of the detection time (the first time a node detects the target). One can also consider a target that moves independently from the nodes, as in \cite{psss}. They also study the tail of the detection time, and which distributions for the movement of the target allows it to avoid detection for the longest time. They prove that, in dimension two, there are two equally good possibilities for it: Stay put or move as an independent Brownian Motion.

\par In the literature presented above, a central hypothesis is that the target moves independently from the nodes. It is also interesting to treat the case when we drop this assumption. We can suppose that the target is able to predict the future trajectories of all the nodes and moves cleverly trying to avoid detection. In \cite{stauffer}, this possibility is considered for the mobile geometric graph for dimensions $d \geq 2$. In this case, they prove a phase transition on the probability of detection as the value of $\lambda$ changes. For dimension one, detection always occurs in finite time for this model.

\bigskip

\par Here we will consider a model in the one-dimensional lattice $\ZZ$. Fix $\rho \in [0,1]$ and place a node in each integer site independently with probability $\rho$. Each node moves as a continuous time random walk obeying the exclusion rule, that says two nodes cannot occupy the same site at the same time. This means that when a node tries to jump to an occupied site, this jump is suppressed. Our initial distribution for the nodes is stationary to this evolution. The collective behaviour of the nodes is the well known exclusion process.

\par As for the target, it starts at the origin and, unlike the nodes, moves only at integer times. On the other hand, we allow the target to jump to any site within distance at most $R>0$ from its current position. The target is detected if it stays on top of some node. We assume that the target knows the future movement of all the nodes, and we ask if it can scape detection with positive probability.

\par We will prove that, for fixed $\rho \in [0,1)$, there exists a phase transition in the probability of detection in finite time, as we vary the value of $R$.

\begin{teo}\label{teo:detection}
Suppose $\rho \in [0,1)$. There exists $R_{0}=R_{0}(\rho)$ such that if  $R \geq R_{0}$, then the probability that the target is never detected by some node is positive.
\end{teo}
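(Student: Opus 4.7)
The plan is to reduce Theorem \ref{teo:detection} to an oriented percolation problem on a renormalised lattice and apply the general percolation criterion announced in the abstract. I would partition space-time $\mathbb{Z}\times[0,\infty)$ into rectangular boxes of some base dimensions $L\times T$, chosen so that a single integer-time jump of length at most $R$ can connect a box to a diagonal neighbour one time-step above. A box is declared \emph{good} if, from any entry point along a suitable portion of its bottom edge, there exists a valid trajectory (integer-time jumps of length $\le R$ avoiding every node) that exits into the corresponding portion of a neighbouring box. The target survives forever as soon as there is an infinite oriented path of good boxes starting at the origin box, so the theorem reduces to showing supercritical oriented percolation of good boxes.

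The base estimate is that, for $R$ large enough, a single box is good with probability arbitrarily close to $1$. Since the SSEP is stationary with product Bernoulli($\rho$) marginals and $\rho<1$, at each integer time the empty sites within a window of length $R$ are comparable in number to $(1-\rho)R$, with exponential concentration. Clairvoyance lets the target pick any empty site within distance $R$ at each integer time, and a union bound over the $O(T)$ integer times in the box, combined with a deterministic geometric argument (the target only needs to drift horizontally at a controlled rate across the box), yields a lower bound of the form $1-C(L,T)e^{-cR}$ for the probability of being good.

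The propagation step is the general renormalisation scheme: writing the probability of a bad box at scale $k$ as $p_k$, the abstract theorem gives an inequality essentially of the form $p_{k+1}\le C p_k^2 + \varepsilon(L_k)$, where $\varepsilon(L)$ is the decoupling error at separation $L$. For this to iterate to $p_k\to 0$, one needs a decoupling inequality for the SSEP: correlations of monotone events supported on space-time regions separated by $L$ must decay sufficiently fast. I would prove this via the graphical (stirring) construction, representing the occupation variables through finitely many labelled space-time paths, and showing that with high probability the influence cones of the two regions are disjoint, yielding polynomial decay of $\varepsilon$.

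The main obstacle is the SSEP decoupling. Because density fluctuations spread only diffusively, one cannot hope for exponential decay in $L$, so the renormalisation must be engineered to tolerate merely polynomial mixing; this is exactly what the abstract criterion is designed to handle, but it forces delicate quantitative choices of the scale ratio $L_{k+1}/L_k$ and of the aspect ratio $L/T$ so that the quadratic geometric term dominates the polynomial decoupling error. A secondary subtlety is to define the good event to be truly measurable with respect to the SSEP restricted to the box (by committing in advance to the admissible entrance strip), so that the decoupling estimate applies cleanly, and to make the entrance/exit strips consistent so that good boxes glue into a genuine oriented infinite cluster.
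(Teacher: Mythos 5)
The overall reduction to an oriented percolation problem in the renormalised lattice is the right idea and matches the paper's strategy (Sections~\ref{sec:percolation} and~\ref{sec:detection}). However, your proposal has a genuine gap in the decoupling step that would cause the whole argument to fail.

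You propose to establish the decoupling for the SSEP by a graphical-construction argument showing that with high probability the ``influence cones'' of the two space-time regions are disjoint, and you expect this to give polynomial decay of the error $\varepsilon(L)$. There are two problems. First, the disjoint-cone argument only works when the boxes are separated predominantly in \emph{space}: if the horizontal distance exceeds a constant multiple of the sum of perimeters plus the vertical separation, then the Poisson tail of the jump count makes the cones disjoint with exponentially high probability. But the renormalisation produces pairs of sub-boxes at scale $k-1$ whose separation may be almost entirely \emph{temporal}, with essentially no horizontal gap. In that regime the cones necessarily overlap and the argument breaks down completely. Second, even the raw covariance of single-site occupations decays only diffusively: Remark~\ref{remark:bad_correation_decay} shows $\cov_\rho(\eta_t(0),\eta_0(0))\ge \useconstant{c:correlation_decay}/\sqrt{t}$, i.e.\ the error is $\Omega(\dist^{-1/2})$. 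But the renormalisation criterion \eqref{eq:error_decay} requires $\limsup x^{7}H(x)<1/200$, and the scale ratio $l_{k+1}\approx l_k^{3/2}$ in \eqref{eq:scales} is tuned so that a seventh-power decay, not a half-power decay, is the threshold; $\dist^{-1/2}$ is nowhere near fast enough and no re-tuning of the aspect ratio can absorb a polynomial that slow. So ``tolerate merely polynomial mixing'' is not achievable with the covariance bound you have in mind.

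What the paper does instead is to give up on a pure covariance inequality and prove Theorem~\ref{teo:decoupling_ep}, a \emph{sprinkled} decoupling: $\EE_\rho(f_1 f_2)\le \EE_{\rho'}(f_1)\EE_{\rho'}(f_2) + \useconstant{c:decouplingep}\dist^2\exp\{-\useconstant{c:decouplingep}^{-1}(\rho'-\rho)^2\dist^{1/4}\}$ for $\rho<\rho'$. The density on the right-hand side is slightly increased, and the error term is then a stretched exponential in $\dist$, comfortably beating $\dist^{-7}$. The proof of this inequality in the time-separated case needs Lemma~\ref{lemma:coupling_ep}, a non-trivial coupling in which particles of the two densities are matched inside small intervals, allowed to coalesce, and re-matched at a sequence of intermediate ``coupling times'' to upgrade the one-step diffusive meeting probability to a stretched-exponential failure probability. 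You would also then need the sprinkled variant \eqref{eq:sprinkling_decoupling}--\eqref{eq:error_decay_sprinkling} of the abstract percolation theorem, with densities $\rho_k$ chosen along the scales as in \eqref{eq:u_samll}, rather than the plain version \eqref{eq:simple_decoupling}. Without the sprinkling and the coupling lemma, the decoupling step — which you yourself flag as the main obstacle — does not go through.
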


\begin{remark}
It is not the case that for all values of $R$ the target can escape with positive probability. If we take any $\rho>0$ and $R=1$ it is possible find two nodes at time zero, one at each side of the origin. Using a suitable construction of the exclusion process, we conclude that these nodes (as well as the empty sites) move as random walks. This implies that the two nodes we found will eventually meet and strangle the target, who is discovered.
\end{remark}

\par This result shares some similarities with \cite{ss} and \cite{stauffer}. We will get back to these papers later.

\par We use multiscale renormalisation to prove the existence of oriented percolation in dependent models. This allows us to perform some comparison, in a similar flavor of \cite{ss}. The main advantage is that our renormalisation does not rely on the specificities of the model and can be used in other contexts. In Subsection \ref{subsec:applications}, we use this step of the proof to prove existence of oriented percolation for random interlacements and its vacant set, if the dimension is large enough. Besides, in the same subsection, we prove analogous of Theorem \ref{teo:detection} for other underlying dynamics, namely, independent renewal chains and independent random walks. This illustrates different ways to use the renormalisation step of our proof.

\par In \cite{stauffer}, the author uses a multiscale renormalisation to prove the existence of a phase where detection always occurs. Our theorem goes in the opposite direction. It gives sufficient conditions to the existence of percolation, and we use it to prove that, in our model, detection may not happen. To prove that detection may fail, it is necessary to compare the process with oriented percolation, instead of loonking into non-oriented models. This adds a new complicating factor, since oriented paths are harder to exhibit.

\par Theorem \ref{teo:detection} is similar to the one in \cite{ss}. There, the authors consider a model in $\ZZ^{d}$, $d \geq 2$, where nodes are placed according to a Poisson point process with intensity $\lambda$ and move as independent random walks. The target also moves in continuous time, but with bounded speed. They compare this process with oriented percolation to prove a phase transition in the probability of detection as the value of $\lambda$ changes.

\par One may wonder if the techniques from \cite{ss} can be used to prove survival in our setting. There, the authors use a well-chosen subspace of $\ZZ^{d+1}$ and prove that each node only influences a small area. Hence, they can remove the forbidden sites and disregard the trajectory of the node. However, when $d=1$, this fails because, since each node intersects a fixed subspace infinitely many times, the area of influence of each node extends infinitely and can not be disregarded so easily. 

\par This makes the proof in dimension one more intricate, and requires some different machinery. We expect, however, that the proof presented in \cite{ss} can be adapted to our case for larger dimensions.

\par An additional difficulty comes from the choice of the exclusion process as an underlying dynamic due to its lack of good mixing properties, as pointed out in Remark \ref{remark:bad_correation_decay}. For this reason the usual techniques do not apply in a straightforward way. The existence of dependence among the movement of the particles is also a complicating factor.

\par For that reason, another interesting aspect of the paper is the decoupling presented in Section \ref{sec:decoupling}. Decouplig inequalities have been studied in many different contexts, such as random interlacements, in \cite{pt} and \cite{s}, Voronoi percolation, see \cite{br}, and Boolean percolation with random radii, in \cite{att} and \cite{br}. Here we prove a decoupling for a conservatice partilce system that presents dependencies. These techniques should be useful to solve several problems such as understanding the behavior of the random walk on top of the interacting particle system, see \cite{avena}, \cite{hhsst} and \cite{ff}. It is our hope that similar estimates can be proved for other conservative and interactive particle systems, such as the zero-range process.

\bigskip

\par There are two general tools we develop in order to establish Theorem \ref{teo:detection}: A renormalisation scheme for oriented percolation models and a decoupling for the exclusion process. These techniques are general facts that we hope can be used in different contexts. The proof of Theorem \ref{teo:detection} lies in the intersection of these techniques that we now briefly describe.

\par The first tool is a general statement about percolation in oriented models. We develop a renormalisation scheme that proves percolation using a fixed set of oriented paths. The main advantage of this technique is it does not require independence. Instead, we only need to take care of the decay of correlations on the environment. We will focus here in site percolation in dimension $1+1$, but the proof techniques can be adapted to more general models.

\par In order to apply our renormalisation scheme, we need to be able to verify the required correlation decay property. This is the content of the second part of the proof, a decoupling for the exclusion process. Our decoupling deals with correlation of functions that depend on the trajectory of the exclusion process only on compact boxes of the space-time $\ZZ \times \RR$. We use a sprinkling argument to obtain good bounds on the error decay in this case. Other potential applications of this estimate should include the study of random walk on the random environment given by the exclusion process, see \cite{avena} and \cite{ff}. In \cite{hhsst}, the authors develop a similar estimate for the particle system formed by independent random walks and prove a law of large numbers and central limit theorem for the random walk over this particle system. We hope that adaptations of their proofs should hold in this case, with the aid of our decoupling.

\par Let us now give a more detailed explanation on each of the two techniques used here. We begin with the introduction of our oriented percolation model. The second subsection is devoted to a discussion on the decoupling for the exclusion process.

\subsection{Overview of oriented dependent percolation}\label{subsec:int_percolation}
~
\par We present here a particular case of the percolation models we are interested in. Let $\mathcal{I} \subset \ZZ^{2}$ be a random subset of the integer lattice with distribution $\PP$ that is translation invariant. Fix also $S$ as the set of paths $f:\NN_{0} \to \ZZ^{2}$ that satisfy
\begin{equation}\label{eq:example}
f(n+1) - f(n) \in \{(0,1),(1,0)\}, \, \text{ for all } n \in \NN_{0}.
\end{equation}
We look for conditions over $\PP$ that ensure the existence of an infinite open path, i.e., a infinite path of $S$ contained in $\mathcal{I}$.

\par When $\PP$ is obtained by independently declaring each vertex open with probability $p$, one can easily prove that percolation occurs for large values of $p$, as proved in \cite{grimmett}. Our objective here is to drop the independence assumption. We will assume instead a good decay on the correlations of the environment.

\par Let us explain the type of estimate we need. Let $B_{1}$ and $B_{2}$ be two square boxes in $\ZZ^{2}$ which are distant enough, see \eqref{eq:support_distance}. Consider two non-decreasing functions $f_{1}: \{0,1\}^{B_{1}} \to [0,1]$ and $f_{2}:\{0,1\}^{B_{2}} \to [0,1]$. We will assume that
\begin{equation}\label{eq:correlation_decay_percolation}
\limsup \dist(B_{1},B_{2})^{7} \sup \{\text{Cov}(f_{1},f_{2})\} < \frac{1}{200},
\end{equation}
where the supremum is taken over all functions $f_{1}$ and $f_{2}$ that are non-decreasing with respective supports on $B_{1}$ and $B_{2}$ and the $\limsup$ is taken as $\dist(B_{1},B_{2})$ diverges.

\par Our theorem states that if the conditions above are satisfied and the probability that a site is open is big enough, then
\begin{displaymath}
\begin{array}{cl}
\text{the probability that there exists a path} \\ \text{in $S$ that is open is positive.}
\end{array}
\end{displaymath}
This is precisely stated later as Theorem \ref{teo:percolation}.

\begin{remark}
We want to apply this to prove Theorem \ref{teo:detection}. However, there are some complications in this. 
First of all, the exclusion process does not satisfy the correlation decay in \eqref{eq:correlation_decay_percolation}, as pointed out in Remark \ref{remark:bad_correation_decay}. Therefore, this condition has to be weakened. The second problem is that our set of paths is not given by functions that satisfy \eqref{eq:example}. Finally, we need to modify the condition that deals with the probability of a vertex to be open. Instead, we will use a finite size criterion that will be better explained in Subsection \ref{subsec:box_notation}. We will devote Subsections \ref{subsec:S} and \ref{subsec:P} to the discussion of the general hypothesis we need on the set $S$ of paths and on the probability $\PP$. 
\end{remark}

\par The main step in the proof of our result is a multiscale renormalisation scheme developed to bound the probability of a sequence of events. These events are defined as the absence of crossings of some well chosen sets. We will prove that, if such probabilities present fast enough decay, then it is possible to construct a concatenation of paths in $S$ to obtain an infinite open path.

\par The correlation decay is used to decouple events of this sequence that are far apart. We will deduce a recursive inequality involving the probability of these events. This will allow us to conclude the good decay of the probabilities and hence the existence of the crossings.

\subsection{Overview of decoupling for the exclusion process}\label{subsec:int_decoupling}
~
\par As we pointed out in the last subsection, the correlation decay in \eqref{eq:correlation_decay_percolation} is not true for the exclusion process. However, it is possible to prove a similar type of estimate.

\par We will work with a stationary exclusion process. The error bounds will be improved by the use of a sprinkling argument, i.e., slightly increasing the density in the exclusion process. This technique has been used in many different contexts, such as \cite{hhsst}, \cite{psss} and \cite{pt}.

\par The setting is similar to the one described above Equation \eqref{eq:correlation_decay_percolation}. In analogy with \eqref{eq:support}, we say that a function of the whole trajectory of the exclusion process has support in a box $B \subset \ZZ \times \RR_{+}$ if for every pair of configurations $\eta$ and $\xi$
\begin{equation}\label{eq:support_decoupling}
\eta_{t}(x)=\xi_{t}(x) \text{ for all } (x,t) \in B \text{ implies } f(\eta)=f(\xi).
\end{equation}

\bigskip

\par Denote by $\per(B)$ the perimeter of the box $B$. We are ready to state our decoupling.

\newconstant{c:decouplingep}
\newbigconstant{C:decouplingep}

\begin{teo}[Exclusion process decoupling]\label{teo:decoupling_ep}
There exist positive constants $\useconstant{c:decouplingep}$ and $\usebigconstant{C:decouplingep}$ such that if $f_{1},f_{2}:\{0,1\}^{\ZZ \times \RR} \to [0,1]$ are two non-decreasing functions with respective supports on space-time boxes $B_{1}$ and $B_{2}$ satisfying
\begin{equation}\label{eq:distance_hypothesis}
\dist=\dist(B_{1},B_{2}) \geq 6(\per(B_{1})+\per(B_{2}))+\usebigconstant{C:decouplingep},
\end{equation} 
then, for any densities $\rho < \rho' \in [0,1] $,
\begin{equation}\label{eq:decoupling_estimate}
\EE_{\rho}(f_{1}f_{2}) \leq \EE_{\rho'}(f_1)\EE_{\rho'}(f_2)+\useconstant{c:decouplingep} \dist^{2}\exp\left\{-\useconstant{c:decouplingep}^{-1}(\rho'-\rho)^{2}\dist^{\sfrac{1}{4}}\right\}.
\end{equation}
\end{teo}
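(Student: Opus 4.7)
The strategy is to couple $\eta$ (the stationary exclusion process at density $\rho$) with two processes $\tilde\eta_1,\tilde\eta_2$ that are independent of each other, stationary at density $\rho'$, and pointwise dominate $\eta$ on $B_1$ and $B_2$ respectively, on a high-probability ``good event'' $G$. Once this coupling is built, the monotonicity of $f_1,f_2$ yields
\[
\EE_\rho(f_1 f_2) \le \EE\!\left[f_1(\tilde\eta_1)f_2(\tilde\eta_2)\charf{G}\right] + \PP(G^c) \le \EE_{\rho'}(f_1)\,\EE_{\rho'}(f_2) + \PP(G^c),
\]
so the theorem reduces to showing $\PP(G^c) \le \useconstant{c:decouplingep}\,\dist^{2}\exp\!\left\{-\useconstant{c:decouplingep}^{-1}(\rho'-\rho)^{2}\dist^{\sfrac{1}{4}}\right\}$.

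The coupling relies on Harris's graphical representation: put independent rate-one Poisson clocks on every edge of $\ZZ$ and, at each ring, exchange the states of the endpoints. Sample i.i.d.\ uniforms $U(x)\in[0,1]$ and set $\eta_0(x) = \charf{\{U(x)\le\rho\}}$. Choose a slab width $L > 0$ to be optimised, denote by $\tilde B_i$ the enlargement of $B_i$ by an additional $L$ on each spatial side, and write $\tilde B_i^0 = \tilde B_i \cap (\ZZ\times\{0\})$. The hypothesis $\dist \ge 6(\per(B_1)+\per(B_2)) + \usebigconstant{C:decouplingep}$ guarantees $\tilde B_1 \cap \tilde B_2 = \emptyset$ for any $L$ of order $\dist$. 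Now define $\tilde\eta_i$ by setting its initial configuration to $\charf{\{U(x)\le\rho'\}}$ on $\tilde B_i^0$ (so that $\eta_0 \le \tilde\eta_{i,0}$ there by monotone coupling) and to an independent Bernoulli$(\rho')$ sample outside, and by using the clocks of $\eta$ on edges inside $\tilde B_i$ and independent fresh clocks elsewhere. Disjointness of the enlargements forces the data driving $\tilde\eta_1$ and $\tilde\eta_2$ to be mutually independent, and each $\tilde\eta_i$ is marginally a stationary exclusion process at density $\rho'$.

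Define $G$ as the event that, for every $(x,t)\in B_1\cup B_2$, the backward stirring trajectory starting from $(x,t)$ stays inside the corresponding enlargement $\tilde B_i$ throughout $[0,t]$. On $G$ this trajectory encounters only clocks inside $\tilde B_i$, which are shared by $\tilde\eta_i$, so the same backward trajectory determines the ancestor site $Y\in\tilde B_i^0$ for both $\eta$ and $\tilde\eta_i$, and
\[
\eta_t(x) = \eta_0(Y) \le \tilde\eta_{i,0}(Y) = \tilde\eta_{i,t}(x),
\]
which is the required pointwise domination on $B_i$.

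Finally, $\PP(G^c)$ is bounded using the fact that a backward stirring trajectory is a continuous-time simple random walk of jump rate two. A Gaussian/Chernoff estimate gives maximal displacement exceeding $L$ in time $T$ with probability at most $2e^{-cL^{2}/T}$. Since $T\le\per(B_i)\le\dist/6$, the choice $L\asymp(\rho'-\rho)\dist^{\sfrac{5}{8}}$ produces $L^{2}/T\gtrsim(\rho'-\rho)^{2}\dist^{\sfrac{1}{4}}$, and a union bound over the $O(\dist^{2})$ points of $B_1\cup B_2$ yields the claimed estimate. The main obstacle is the careful setup of the coupling: clocks on edges crossing $\partial\tilde B_i$ can transport particles between the interior and exterior of the enlargement, and one must ensure these ``leaks'' do not break the pointwise ordering. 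The backward-trajectory viewpoint sidesteps the difficulty by reducing the analysis to the Gaussian-type diffusion of a single random walk, while the sprinkling density $\rho'-\rho$ supplies exactly the slack needed to dominate the initial configurations; balancing it against the enlargement width $L$ produces the advertised exponent $(\rho'-\rho)^{2}\dist^{\sfrac{1}{4}}$.
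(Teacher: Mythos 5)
Your coupling correctly handles the case where $B_1$ and $B_2$ are far apart in \emph{space}, and in that regime your backward-trajectory argument is essentially the one the paper uses (the first half of the proof of Theorem \ref{teo:decoupling_ep}, which in fact needs no sprinkling and yields a bound of the form $(1+d_H)e^{-c(d_V+d_H)}$). The gap is in the case where the separation between $B_1$ and $B_2$ is mostly \emph{temporal}, and that is exactly the case the hypothesis~\eqref{eq:distance_hypothesis} allows and the sprinkling is there for. Two things go wrong.

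First, your claim that the distance hypothesis ``guarantees $\tilde B_1\cap\tilde B_2=\emptyset$'' is false. Condition~\eqref{eq:distance_hypothesis} controls the Euclidean space-time distance relative to the perimeters; it does not force the horizontal distance $d_H$ to be positive. Take for instance $B_1=[-1,1]\times[-1,0]$ and $B_2=[-1,1]\times[t,t+1]$ with $t$ large: the hypothesis holds, yet $d_H=0$. In order for the backward stirring trajectory from a point of $B_2$ to remain inside $\tilde B_2$ on the whole interval $[0,t]$, $\tilde B_2$ must be extended down to time $0$ (indeed you implicitly do this when you set $\tilde B_i^0 = \tilde B_i\cap(\ZZ\times\{0\})$). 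But once $\tilde B_1$ and $\tilde B_2$ are both extended to time $0$ while their spatial projections overlap, the two enlargements intersect, the shared clocks and shared uniforms make $\tilde\eta_1$ and $\tilde\eta_2$ dependent, and the factorisation $\EE[f_1(\tilde\eta_1)f_2(\tilde\eta_2)] = \EE_{\rho'}(f_1)\EE_{\rho'}(f_2)$ breaks down. There is no way to repair this by choosing $L$ smaller: the temporal overlap is forced regardless of $L$.

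Second, and more structurally, a deterministic (on the good event) pointwise domination that does not erase the memory of $\eta_0$ cannot exist in the temporal case, because Remark~\ref{remark:bad_correation_decay} shows $\cov_\rho(\eta_t(0),\eta_0(0))\gtrsim t^{-1/2}$ with no improvement from $\rho'-\rho$. If your construction were sound you could take $L\asymp\dist$ and obtain an error $e^{-c\dist}$ independent of $\rho'-\rho$, contradicting that lower bound. (Your choice $L\asymp(\rho'-\rho)\dist^{5/8}$ matches the advertised exponent only because you use $T\le\per(B_i)$, which is wrong: the backward trajectory from the top of $B_2$ must cover the full time lag $\approx\dist$, not just the height of $B_2$.) The role of the sprinkling, encapsulated in Lemma~\ref{lemma:coupling_ep}, is precisely to build a $\rho'$-density process $\xi$ that is \emph{independent of $\eta_0$} and yet dominates $\eta$ at time $t$; this requires actively merging the two processes over the elapsed time via pairing and iterated re-matching of particles, which is what produces the $(\rho'-\rho)^2\dist^{1/4}$ exponent. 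Your argument has no analogue of this merging, and cannot produce a process independent of $\eta_0$ by only re-using the graphical construction and monotone-coupling the initial layer.
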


\begin{remark}
We can also take $f_{1}$ and $f_{2}$ to be two non-increasing functions and assume that $\rho' < \rho \in [0,1]$. The proof carries out in the same way in this case.
\end{remark}

\begin{remark}
Observe that \eqref{eq:decoupling_estimate} is not a correlation estimate, since we need to add the sprinkling in order to have this bound on the error function.
\end{remark}

\par For the proof of this decoupling we look at the relative position between the boxes. When the horizontal distance between the boxes is large, the result follows easily from concentration bounds. The second case considered is when the vertical distance between the boxes is large but not necessarily the horizontal. In this case, the sprinkling is used. We couple two exclusion processes with densities $\rho < \rho'$ in a way that for a large time, the process with bigger density dominates the less dense process in an interval with large probability.

\par Let us describe the coupling. We will match each particle of the process that has smaller density, with a particle of the process with larger density $\rho'$, similarly to the coupling contructed in \cite{stauffer}. We do this in a careful way so that each pair of particles is not far apart at time zero. Once we have this matching between the particles, we set the evolution of the processes in a way that when a pair of matched particles meets, they stay together from this time on. This is not good enough, since the time it takes for a typical pair to meet does not decay fast enough. To fix this, the matching is remade at some particular times and all the process starts again in order to match more particles.

\bigskip

\textbf{Notation.} Throughout the text, we write $\NN=\{1,2,3,\dots\}$, $\NN_{0}=\NN \cup \{0\}$ and $\RR_{+}=[0,+ \infty)$. We will always use $\PP$ to denote a probability measure and $\EE$ for the expectation with respect to it. When a probability is indexed by some parameter, the expectations with respect to that probability measure will receive the same parameter, e.g., $\EE_{\rho}$ denotes the expected value with respect to $\PP_{\rho}$, the distribution of an exclusion process with density $\rho$.

\bigskip

\textbf{About constants.} Throughout the text, we use $c$ and $C$ to denote universal positive constants. If this constant depends on some variable, we explicitly write it, e.g., $c(\rho)$ denotes a constant that depends on the value of $\rho$. Constants may change from line to line in estimates. Numbered constants refer to their first appearance in the text or to its appearance in the Appendix.

\bigskip

\textbf{Structure of the paper.} Section \ref{sec:percolation} is devoted to the precise statement and proof of our oriented percolation model. In Section \ref{sec:decoupling}, we prove Theorem \ref{teo:decoupling_ep}. The sections corresponding to oriented percolation (\ref{sec:percolation}) and decoupling of the exclusion process (\ref{sec:decoupling}) can be read independently. Finally, in Section \ref{sec:detection} we conclude the proof of Theorem \ref{teo:detection}. This section uses the results in Section \ref{sec:percolation}, but Section \ref{sec:decoupling} can be skipped, since knowing the statement of Theorem \ref{teo:decoupling_ep} is enough.

\bigskip

\textbf{Acknowledgements.} The authors thank Tertuliano Franco, Milton Jara and Roberto Oliveira for valuable discussions on the initial stages of the work. RB thanks CAPES Proex and FAPERJ grant E-26/202.231/2015 for financial support. AT thanks CNPq grants 306348/2012-8 and 478577/2012-5 and FAPERJ grant 202.231/2015 for financial support.

\section{Oriented dependent percolation}\label{sec:percolation}
~
\par In this section we work on our renormalisation scheme for oriented percolation. We begin by working on the details for the construction of the set of paths $S$. The second subsection is dedicated to the study of the probability measure $\PP$. Subsection \ref{subsec:box_notation} is dedicated to the introduction of some additional notation concerning the scales in our renormalisation. Subsection \ref{subsec:tp} we state precisely our theorem and prove it. Finally, in the last subsection we examine some applications of Theorem \ref{teo:percolation}.

\subsection{The set $S$}\label{subsec:S}
~
\par In this subsection, we discuss the properties we need the set of paths.

\par Fix a convex set $\mathcal{C} \subset \RR \times [0,1]$ with $ 0 \in \partial \mathcal{C}$. We will assume that $S$ is formed by all the functions $f: \NN_{0} \to \ZZ^{2}$ such that
\begin{equation}\label{eq:construction_S}
f(n+1)-f(n) \in \mathcal{C} \setminus \{0\}.
\end{equation}

\par We also need to assure the set $S$ is rich enough to allow us to construct crossings of boxes. Hence, we assume
\begin{itemize}
\item[H1.] $(0,1) \in \mathcal{C}$;
\item[H2.] either $(1,0) $ or $(3,1)$ is in $\mathcal{C}$.
\end{itemize}

\par These hypothesis allow the construction of horizontal crossings in boxes of the form $[0,3L] \times [0,L]$ and vertical crossings in boxes of the form $[0,L] \times [0,3L]$, for $L \in \NN$.

\par One of the main reasons why the set $S$ is constructed in this way is a concatenating property we will make use of. For $f \in S$, define $\tilde{f}:\RR_{+} \to \RR^{2}$ as the linear interpolation of $f$:
\begin{equation}\label{eq:linear_interpotarion}
\tilde{f}(t)=(t-\lfloor t \rfloor)f(\lfloor t \rfloor)+(1+\lfloor t \rfloor-t)f(\lfloor t \rfloor+1).
\end{equation}

\par Suppose we are given $f,g \in S$ and that there exist $s,t \in \RR_{+}$ such that $\tilde{f}(s)=\tilde{g}(t)$. Then the concatenation of $f$ and $g$, given by $h: \NN_{0} \to \ZZ^{2}$ as
\begin{equation}\label{eq:concatenation}
h(n)=\begin{cases}
	f(n) & \text{if } n \leq s, \\
	g(\lfloor t \rfloor-\lfloor s \rfloor +n) & \text{if } n > s,
	\end{cases}
\end{equation}
is also in $S$. This is easily verified by observing that
\begin{equation}\label{eq:concatenation_proof}
g(\lfloor t \rfloor+1)-f(\lfloor s \rfloor) \in \mathcal{C}.
\end{equation}

\par We end this subsection with examples of sets that can be considered as the possible paths in our oriented model.

\begin{ex}\label{ex:oriented_percolation_set}
Notice that the set defined in \eqref{eq:example} clearly satisfy all hypothesis above, if we consider the convex set $\mathcal{C}$ to be the convex hull of the points $(0,0)$, $(0,1)$ and $(1,0)$.
\end{ex}

\begin{ex}\label{ex:detection_set}
The second example is important for the proof of Theorem \ref{teo:detection}. Fix $R \geq 3$ and define $\tilde{S}_{R}$ to be the set of paths obtained by using the set $\mathcal{C}_{R}$ given by the convex hull of $(-R,1)$, $(0,0)$ and $(R,1)$. It is easy to see that these sets also satisfy all the hypothesis above.
\end{ex}

\subsection{The probability measure $\PP$}\label{subsec:P}
~
\par In this subsection we state the necessary hypothesis on the measure $\PP$.

\par It will be useful to think of $\PP$ as a measure on $\{0,1\}^{\ZZ^{2}}$ and write $\eta:\{0,1\}^{\ZZ^{2}} \to \{0,1\}$ for the (random) characteristic function given by the (also random) set $\mathcal{I}$.

\par We require the probability $\PP$ to satisfy two conditions that will be discussed in the following.

\par First we assume that
\begin{equation}
\text{$\PP$ is translation invariant}.	
\end{equation}

\par The second condition deals with the decay of correlations. To state this precisely we need some additional notation.

\par Observe that the set $\{0,1\}^{\ZZ^{2}}$ has a partial order given by
\begin{equation}\label{eq:partial_order}
\eta \preceq \xi \text{ if and only if } \eta(x) \leq \xi(x), \text{ for all } x \in \ZZ^{2}.
\end{equation}
This allows us to say that a function $f: \{0,1\}^{\ZZ^{2}} \to \RR$ is non-increasing if
\begin{equation}\label{eq:non_decreasing}
\eta \preceq \xi \text{ implies } f(\eta) \geq f(\xi).
\end{equation}

\par We also say that $f:\{0,1\}^{\ZZ^{2}} \to \RR$ has support on the box $B=[a,b] \times [c,d] \subset \RR^{2}$ if for every pair of configurations $\eta$ and $\xi$
\begin{equation}\label{eq:support}
\eta|_{B \cap \ZZ^{2}}=\xi|_{B \cap \ZZ^{2}} \text{ implies } f(\eta)=f(\xi).
\end{equation}
We set $\per(B)=2(|b-a|+|d-c|)$.

\newbigconstant{C:support_distance_percolation_1}
\newbigconstant{C:support_distance_percolation_2}

\par We are now ready to state our second assumption on $\PP$. It says that there exist constants $\usebigconstant{C:support_distance_percolation_1}, \usebigconstant{C:support_distance_percolation_2} \geq 0$ such that for any non-increasing functions $f,g:\{0,1\}^{\ZZ^{2}} \to [0,1]$ with respective supports on boxes $B_{1}$ and $B_{2}$ that satisfy
\begin{equation}\label{eq:support_distance}
\dist(B_{1},B_{2}) \geq \usebigconstant{C:support_distance_percolation_1}(\per(B_{1})+\per(B_{2})) +\usebigconstant{C:support_distance_percolation_2},
\end{equation}
we have
\begin{equation}\label{eq:simple_decoupling}
\EE(f(\eta)g(\eta)) \leq \EE(f(\eta))\EE(g(\eta)) + H(\dist(B_{1},B_{2})),
\end{equation}
where the error term $H: \RR \to [0,+\infty) $ is a non-increasing function satisfying \begin{equation}\label{eq:error_decay}
\limsup_{x \to +\infty}x^{7}H(x) < \frac{1}{200}.
\end{equation}

\begin{remark}
Combining equations \eqref{eq:simple_decoupling} and \eqref{eq:error_decay}, is is easy to see that the condition above is analogous to the one in Equation \eqref{eq:correlation_decay_percolation}.
\end{remark}

\begin{remark}
In Equation \eqref{eq:support_distance} above, we will assume that $\usebigconstant{C:support_distance_percolation_1} \geq 1$. This does not weaken our hypothesis, it just simplifies some computations.
\end{remark}

\subsection{The box notation}\label{subsec:box_notation}
~
\par Before stating precisely our theorem we need some notation. We begin by the scale notation that will be used in our renormalisation scheme.

\par First we define the sequence of scales as
\begin{equation}\label{eq:scales}
l_{0}=10^{100}, \qquad l_{k+1}=\lfloor l_{k}^{\sfrac{1}{2}} \rfloor l_{k} \qquad \text{and} \qquad L_{k}=\left\lfloor \left(\frac{3}{2}+\frac{1}{k}\right)l_{k}\right\rfloor.
\end{equation}
Observe that $\frac{l_{k}^{\sfrac{3}{2}}}{2} \leq l_{k+1} \leq l_{k}^{\sfrac{3}{2}}$ and that $l_{k} \leq L_{k} \leq 2l_{k}$ if $k$ is large enough.

\par This allows us to define the sequence of sets (see Figure \ref{fig:crossing})
\begin{equation}\label{eq:rectangle}
A_{k}=[0,l_{k}]\times[0,L_{k}]\cup [0,l_{k}+L_{k}]\times[L_{k},L_{k}+l_{k}].
\end{equation}
We also set the box of $A_{k}$ as
\begin{equation}\label{eq:box}
B_{k}=[0,l_{k}+L_{k}] \times [0,l_{k}+L_{k}].
\end{equation}

\begin{figure}
\centering
\begin{tikzpicture}[scale=0.5]

\draw[-, thick]  (0,0)--(3,0)--(3,3.5)--(6.5,3.5)--(6.5,6.5)--(0,6.5)--(0,0);

\draw[thick, <->] (3.1,0)--(3.1,3.5);
\node[right] at (3,1.75) {$L_{k}$};

\draw[thick, <->] (0,-0.1)--(3,-0.1);
\node[below] at (1.5,-0.1) {$l_{k}$};

\fill[black] (2,0) circle (0.1);
\fill[black] (2.5,0.5) circle (0.1);
\fill[black] (1.5,1) circle (0.1);
\fill[black] (1.5,1.5) circle (0.1);
\fill[black] (2.5,2) circle (0.1);
\fill[black] (2,2.5) circle (0.1);
\fill[black] (2.5,3) circle (0.1);
\fill[black] (2.5,3.5) circle (0.1);
\fill[black] (3,4) circle (0.1);
\fill[black] (4,4.5) circle (0.1);
\fill[black] (5,5) circle (0.1);
\fill[black] (4.5,5.5) circle (0.1);
\fill[black] (5.5,6) circle (0.1);
\fill[black] (6.5,6) circle (0.1);

\draw[-]  (2,0)--(2.5,0.5)--(1.5,1)--(1.5,1.5)--(2.5,2)--(2,2.5)--(2.5,3)--(2.5,3.5)--(3,4)--(4,4.5)--(5,5)--(4.5,5.5)--(5.5,6)--(6.5,6);

\end{tikzpicture}
\caption{The set $A_{k}$ and a crossing of it.}\label{fig:crossing}
\end{figure}
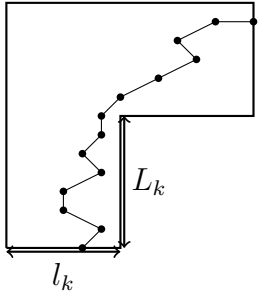

\par Recall the linear interpolation of a function $f \in S $, defined in \eqref{eq:linear_interpotarion}. We say that $f \in S$ is a crossing of $A_{k}$ (see Figure \ref{fig:crossing}) if there exists $T_{f} \in \RR_{+}$ such that

\begin{itemize}
\item[] $\tilde{f}(0)=f(0) \in [0,l_{k}] \times \{0\}$;
\item[] $\tilde{f}(T_{f}) \in \{l_{k}+L_{k}\} \times [L_{k}, l_{k}+L_{k}]$;
\item[] $f(n) \in A_{k}$ , for all $n \in [0,T_{f}] \cap \NN_{0}$.
\end{itemize}

\par We say that a crossing $f$ of $A_{k}$ is open if $\eta(f(n))=1$, for all $n \in [0,T_{f}] \cap \NN_{0}$. Define the events
\begin{equation}\label{eq:renormalization_event}
D_{k}=\left\{\text{there exists no open crossing $f$ of } A_{k} \right\}.
\end{equation}
These are the events whose probability we are interested in bounding. We also define
\begin{equation}\label{eq:renormalization_probability}
p_{k}(S)=\PP[D_{k}].
\end{equation}

\par Although the probabilities $p_{k}(S)$ depend on the set $S$, we will usually omit this dependence and write only $p_{k}$.

\par An important observation is that the event $D_{k}$ has support in the box $B_{k}$, in the sense of \eqref{eq:support}. Notice also that the characteristic function of $D_{k}$ is a non-increasing function.

\par For $x \in \ZZ^{2}$, define the translated sets $A_{k}(x)$, $B_{k}(x)$ and write $D_{k}(x)$ for the event in \eqref{eq:renormalization_event}, when replacing $A_{k}$ by $A_{k}(x)$ in its definition.

\subsection{Theorem and proof}\label{subsec:tp}
~
\par We begin by stating the theorem we prove in this subsection

\begin{teo}\label{teo:percolation}
Suppose $\PP$ satisfy all the hypothesis in Subsection \ref{subsec:P}. There exists a $\tilde{k} \in \NN$ such that, for any set $S$ satisfying the hypothesis in Subsection \ref{subsec:S}, if
\begin{equation}\label{eq:trigger}
p_{k} \leq l_{k}^{-4}, \qquad \text{for some } k \geq \tilde{k},
\end{equation}
then
\begin{equation}\label{eq:recurrence}
p_{n} \leq l_{n}^{-4}, \qquad \text{for all } n \geq k.
\end{equation}
Besides,
\begin{equation}\label{eq:existence_of_percotaion}
\PP[\textnormal{there exists an infinite open path } f \in S]>0.
\end{equation}
\end{teo}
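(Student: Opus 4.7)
The plan is to close the recursion \eqref{eq:recurrence} by multiscale induction and then extract an infinite open path by a compactness argument. I take $\tilde{k}$ large enough that the decoupling \eqref{eq:simple_decoupling} applies at scale-$k$ boxes separated by distance $\sim l_k^{3/2}$ (using \eqref{eq:support_distance}, since $\per(B_k) \sim l_k$) and that \eqref{eq:error_decay} gives $H(d) \leq \tfrac{1}{200}\, d^{-7}$ in the relevant range.

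The heart of the argument is an interception lemma: I construct a family $\mathcal{P}_k \subset \ZZ^{2} \times \ZZ^{2}$ with
\[
|\mathcal{P}_k| \leq C\, l_k, \quad \dist(B_k(x_1), B_k(x_2)) \geq c\, l_k^{3/2} \text{ for each } (x_1,x_2) \in \mathcal{P}_k, \quad B_k(x_i) \subset B_{k+1},
\]
such that
\[
D_{k+1} \subseteq \bigcup_{(x_1,x_2) \in \mathcal{P}_k} \bigl( D_k(x_1) \cap D_k(x_2) \bigr).
\]
To build $\mathcal{P}_k$, I cover $A_{k+1}$ with staircase chains of $A_k$-translates running from the bottom of its vertical arm to the right edge of its horizontal arm; by the concatenation identity \eqref{eq:concatenation}--\eqref{eq:concatenation_proof}, together with axioms H1 and H2, if every $A_k$ in one such chain has an open crossing, then these glue into an open $A_{k+1}$-crossing. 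Hence $D_{k+1}$ forces at least one $D_k$ event on every chain. Selecting a thin strip at the bottom of the vertical arm and one inside the horizontal arm—separated vertically by $\sim L_{k+1} \sim l_k^{3/2}$—and taking a "first failure/last failure" pair along each chain (reminiscent of the endpoints of a dual blocker), yields $\sim l_k^{1/2}\cdot l_k^{1/2} = l_k$ admissible pairs, each lying in $D_k \times D_k$ whenever $D_{k+1}$ holds.

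Granted the interception lemma, translation invariance and the decoupling \eqref{eq:simple_decoupling} (applicable since $\charf{D_k}$ is non-increasing in the configuration) give, for each pair,
\[
\PP[D_k(x_1) \cap D_k(x_2)] \leq p_k^{2} + H\bigl(\dist(B_k(x_1), B_k(x_2))\bigr) \leq p_k^{2} + \tfrac{1}{200}(c\, l_k^{3/2})^{-7}.
\]
A union bound over $\mathcal{P}_k$ together with $p_k \leq l_k^{-4}$ and $l_{k+1}^{-4} \geq l_k^{-6}$ (from $l_{k+1} \leq l_k^{3/2}$) gives
\[
p_{k+1} \leq C\, l_k\bigl( l_k^{-8} + c\, l_k^{-21/2} \bigr) \leq C\, l_k^{-7} + C\, l_k^{-19/2} \leq l_{k+1}^{-4}
\]
for all $k$ large enough, closing the recursion \eqref{eq:recurrence}. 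The resulting $p_n$ are super-exponentially summable, so Borel--Cantelli along scales yields, almost surely, some $N(\omega)$ with open $A_n(0)$-crossings existing for every $n \geq N$. These crossings are finite open paths in $S$ starting inside $[0, l_N] \times \{0\}$ and of length $\geq l_n \to \infty$. Since $\mathcal{C} \cap \ZZ^{2}$ is finite (so the oriented tree of open paths rooted in $[0, l_N] \times \{0\}$ has bounded branching), König's lemma delivers an infinite open path $f \in S$, yielding \eqref{eq:existence_of_percotaion}.

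The delicate step is the interception lemma: its combinatorial budget $|\mathcal{P}_k| \leq C\, l_k$ and its geometric budget $\dist \geq c\, l_k^{3/2}$ are exactly what the polynomial exponent $7$ in \eqref{eq:error_decay} can absorb into $l_{k+1}^{-4}$. Any slack—e.g., $|\mathcal{P}_k| \sim l_k^{2}$ or pairs only at distance $\sim l_k$—would break the recursion, and the abstract axioms H1, H2 combined with the concatenation identity \eqref{eq:concatenation_proof} are the only tools available for the staircase construction, since $\mathcal{C}$ is specified only qualitatively.
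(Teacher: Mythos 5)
Your multiscale renormalisation step is essentially Lemma \ref{lemma:renormalization} of the paper, and the recursion that follows is the same bookkeeping: roughly $l_k$ pairs of scale-$k$ events at distance $\gg l_k$, absorbed by the exponent $7$ in \eqref{eq:error_decay}, with $l_{k+1}^{4} \leq l_k^{6}$ closing the induction. (Your uniform separation $\dist \geq c\, l_k^{3/2}$ is a touch stronger than what the two-chain geometry actually yields — the paper's computation \eqref{eq:estimate_1} carries an additional $k^{-2}$ factor, and the recursion there only invokes $\dist \geq \usebigconstant{C:support_distance_percolation_1} l_k$ — but this costs nothing since the inequality has ample slack.)

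The genuine gap is in the extraction of the infinite open path. Applying Borel--Cantelli directly to the events $D_n$ gives, almost surely for some random $N$, an open crossing of $A_n$ for every $n \geq N$; but by definition a crossing of $A_n$ starts somewhere in $[0, l_n]\times\{0\}$, \emph{not} in $[0, l_N]\times\{0\}$ as you assert. Since $l_n\to\infty$, there is no fixed finite root set, and one can imagine open crossings of every $A_n$ whose starting points escape to infinity and are pairwise disjoint — yielding no infinite path — so König's lemma cannot be invoked as stated. The paper sidesteps this by running Borel--Cantelli over the \emph{second-chain} events $\{D_{k-1}(x): x\in U_k\}$ from Lemma \ref{lemma:renormalization}, using $\sum_k 10\, l_{k-1}^{1/2}\, p_{k-1}<\infty$ as in \eqref{eq:borel_cantelli}; the second chain of $A_k$ is anchored through a specific nested sequence of scale-$(k-1)$ boxes near the origin and the corner $(l_k,L_k)$, so the crossings it produces concatenate across scales into a single infinite path. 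You would need to reproduce this anchoring (or otherwise force the crossings through a fixed finite set, e.g.\ a bounded window at some level $L_{N}$) before König's lemma or a concatenation argument can deliver \eqref{eq:existence_of_percotaion}.
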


\begin{remark}
The value of $\tilde{k}$ does not depend on the set $S$. In fact, its dependence on the probability measure $\PP$ is only through the error function $H$ in \eqref{eq:simple_decoupling}.
\end{remark}

\par The proof of this theorem begins with a lemma that relates the events $D_{k}$ and $D_{k-1}$. We will prove that if $D_{k}$ holds, then there exists two events in the scale $k-1$ that hold and are far apart, in the sense of \eqref{eq:support_distance}.

\begin{lemma}\label{lemma:renormalization}
There exists $k_{0} \in \NN$ such that for each $ k \geq k_{0}$, there exists $M_{k} \in \ZZ^{2}$ satisfying
\begin{itemize}
\item[1.] $|M_{k}| \leq 10l_{k-1}^{1/2} $;
\item[2.] If $D_{k}$ happens, there exists $x,y \in M_{k}$ such that $D_{k-1}(x)$ and $D_{k-1}(y)$ happen and
\begin{equation}\label{eq:support_distance_renormalization}
\dist(B_{k-1}(x),B_{k-1}(y)) \geq \usebigconstant{C:support_distance_percolation_1}(\per(B_{k-1}(x))+\per(B_{k-1}(y))) +\usebigconstant{C:support_distance_percolation_2},
\end{equation}
with the constants $\usebigconstant{C:support_distance_percolation_1}$ and $\usebigconstant{C:support_distance_percolation_2}$ as in \eqref{eq:support_distance}.
\end{itemize}
\end{lemma}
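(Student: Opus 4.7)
The plan is to construct $M_k$ as the union of base points for two disjoint concatenating chains of translates of $A_{k-1}$ that traverse $A_k$, each of which, if entirely open, yields an open crossing of $A_k$ by successive applications of the concatenation rule \eqref{eq:concatenation}. The two chains are placed along opposite boundary portions of the L-shape $A_k$ (one hugging the upper-left boundary, the other hugging the lower-right boundary), so that they remain separated by a distance of order $l_k$ everywhere.

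For the single-chain construction, I would use the scale relation $l_k = \lfloor l_{k-1}^{\sfrac{1}{2}}\rfloor\, l_{k-1}$ from \eqref{eq:scales}, which shows that $N = O(l_{k-1}^{\sfrac{1}{2}})$ translates of $A_{k-1}$ are needed to span $A_k$. Base points $x_0, x_1, \dots, x_{N-1}$ are placed so that the translates $A_{k-1}(x_i) \subset A_k$ trace out a path from the bottom entry of $A_k$ to the right exit: first moving upward (by shifts which are multiples of $(0,1) \in \mathcal{C}$, guaranteed by H1), and then moving rightward through the top rectangle (by shifts whose direction lies in $\mathcal{C}$, guaranteed by H2). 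The spacings $x_{i+1} - x_i$ have magnitude of order $l_{k-1}$ and are chosen so that consecutive translates $A_{k-1}(x_i), A_{k-1}(x_{i+1})$ overlap in a region that any open crossing of either must traverse, forcing the interpolated paths \eqref{eq:linear_interpotarion} to meet and satisfying the gluing condition \eqref{eq:concatenation_proof}. Repeatedly applying \eqref{eq:concatenation} then produces an open crossing of $A_k$ whenever every $A_{k-1}(x_i)$ is open. I construct two such chains: chain~$1$ along the upper-left boundary of $A_k$ (vertical leg near $x=0$, horizontal leg near $y = L_k + l_k$), and chain~$2$ along the lower-right boundary (vertical leg near $x = l_k$, horizontal leg near $y = L_k$). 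Taking $M_k$ to be the union of the two sets of base points, $|M_k|\leq 2N \leq 10\,l_{k-1}^{\sfrac{1}{2}}$ for $k$ large, giving property~1.

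If $D_k$ occurs, neither chain can be entirely open (otherwise the concatenated crossing contradicts $D_k$), so there exist $x$ in chain~$1$ and $y$ in chain~$2$ with $D_{k-1}(x)$ and $D_{k-1}(y)$ both holding. Since chain~$1$ sits in the upper-left portion of $A_k$ and chain~$2$ in the lower-right, any two base points from different chains are separated by a distance $\geq l_k - O(l_{k-1}) \geq l_k/2$ for $k$ large. The boxes $B_{k-1}(\cdot)$ have side $l_{k-1}+L_{k-1} \leq 3\,l_{k-1}$ and perimeter at most $12\,l_{k-1}$ by \eqref{eq:scales}, \eqref{eq:box}, so
\[
\dist(B_{k-1}(x), B_{k-1}(y)) \;\geq\; l_k/2 - 6\,l_{k-1}.
\]
Since $l_k/l_{k-1} = \lfloor l_{k-1}^{\sfrac{1}{2}}\rfloor \to \infty$, the right-hand side dominates $24\,\usebigconstant{C:support_distance_percolation_1}\,l_{k-1} + \usebigconstant{C:support_distance_percolation_2}$ for all $k \geq k_0$ with $k_0$ sufficiently large, verifying \eqref{eq:support_distance_renormalization}.

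The main obstacle is verifying the concatenation step of the chain construction: that consecutive translates $A_{k-1}(x_i), A_{k-1}(x_{i+1})$ can be positioned so that any two open crossings are forced to share a point of their linear interpolation. This rests on a careful geometric choice of overlap regions together with the oriented nature of $S$, which prevents paths from bypassing the overlap since they are constrained to move in the cone $\mathcal{C}$. Once this geometric step is set up, the rest of the argument is elementary bookkeeping with the scale relation $l_k \sim l_{k-1}^{\sfrac{3}{2}}$ coming from \eqref{eq:scales}.
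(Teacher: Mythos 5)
Your proposal shares the paper's basic plan: build two disjoint chains of translates of $A_{k-1}$ inside $A_k$, observe that if $D_k$ occurs then some box in each chain must fail (else concatenation would produce an open crossing of $A_k$), take $M_k$ to be the union of base points, and bound the distance between a failing box in one chain and one in the other. The size bound $|M_k|\leq 10\,l_{k-1}^{1/2}$ is also the same.

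Where the argument breaks is the claimed lower bound $\dist(B_{k-1}(x), B_{k-1}(y)) \geq l_k/2$: that separation cannot be realised by any admissible pair of chains, and this is the key quantitative step of the lemma. Any chain of $A_{k-1}$-translates that supports concatenation of crossings must move along a shift essentially equal to $(l_{k-1}, L_{k-1})$, so it climbs along a line of slope $L_{k-1}/l_{k-1}$. If such a chain starts at the bottom of $A_k$ with horizontal offset $x_0$, then by the time it reaches height $L_k$ its narrow part sits near $x_0 + l_{k-1}(L_k/L_{k-1})$, and staying inside $[0,l_k]\times[0,L_k]$ on the way up forces $x_0 + l_{k-1} L_k/L_{k-1} \leq l_k + O(L_{k-1})$. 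By \eqref{eq:scales} (recall $L_j/l_j = \tfrac32 + \tfrac1j$ up to rounding), this gives $x_0 = O(l_k/k^2)$. In particular there is no chain hugging the inner corner near $x = l_k$; every admissible chain starts within $O(l_k/k^2)$ of $x=0$ and runs parallel to the line $y = \frac{L_{k-1}}{l_{k-1}}(x-L_{k-1})$ of \eqref{eq:line_r}. The best achievable separation between two disjoint chains is therefore of order $l_k/k^2$, not $l_k/2$.

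This is exactly what the paper does: the two chains of \eqref{eq:first_chain} and \eqref{eq:second_chain} are nested parallel diagonals, one through the origin and one through the inner corner $(l_k, L_k)$ (see Figure \ref{fig:strip}), and \eqref{eq:estimate_1}--\eqref{eq:estimate_2} produce a separation of order $l_{k-1}\lfloor l_{k-1}^{1/2}\rfloor\,\bigl|\tfrac1k - \tfrac1{k-1}\bigr|$, i.e.\ of order $l_k/k^2$. Since $l_k$ grows super-exponentially in $k$ and $\per(B_{k-1}) \leq 12\,l_{k-1}$, this weaker bound still dominates the right-hand side of \eqref{eq:support_distance_renormalization} for all $k\geq k_0$. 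Your distance step needs to be reworked along these lines; once that is done the rest of your argument matches the paper's. The concatenation subtlety you flag at the end is genuine, but the paper treats it at the same level of informality you do, so that is not where the proposal diverges.
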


\begin{proof}
We will look into the event $D_{k}$ for fixed $k$. The idea of the proof is to construct two chains of events in the scale $k-1$ in a way that, if $D_{k}$ holds, then one event in each chain necessarily holds.

We will construct a chain of sets of the form $A_{k-1}$ and take the corresponding events $D_{k-1}$. First, define
\begin{equation}\label{eq:first_chain}
x_{j}=j(l_{k-1},L_{k-1}), \,\,\, 0 \leq j \leq \frac{L_{k}+l_{k}}{L_{k-1}}.
\end{equation}
Observe that $(A_{k-1}(x_{j}))_{0 \leq j \leq \frac{L_{k}+l_{k}}{L_{k-1}}}$ crosses the set $A_{k}$ from the bottom to the top, as in Figure \ref{fig:chain_1}. Notice that the sequence $(A_{k-1}(x_{j}))_{0 \leq j \leq \frac{L_{k}+l_{k}}{L_{k-1}}}$ does not touch the point $(l_{k},L_{k})$. This is a simple consequence of
\begin{equation}\label{eq:touching_corner}
l_{k-1}\frac{L_{k}}{L_{k-1}}+L_{k-1} \leq L_{k}.
\end{equation}

\begin{figure}[h]
\centering
\begin{tikzpicture}[scale=0.6]

\draw[-, thick]  (0,0)--(3,0)--(3,3.5)--(6.5,3.5)--(6.5,6.5)--(0,6.5)--(0,0);
\draw[-, thick, shift={(7.5,0)}]  (0,0)--(3,0)--(3,3.5)--(6.5,3.5)--(6.5,6.5)--(0,6.5)--(0,0);

\foreach \x in {0,1, ...,15}{

\draw[-, shift={(0.15*\x,0.4*\x)}] (0,0)--(0.15,0)--(0.15,0.4)--(0.55,0.4)--(0.55,0.55)--(0,0.55)--(0,0);
\draw[-, shift={(13.45-0.4*\x,5.95-0.15*\x)}] (0,0)--(0.15,0)--(0.15,0.4)--(0.55,0.4)--(0.55,0.55)--(0,0.55)--(0,0);

}

\end{tikzpicture}
\caption{The first collection of sets and its reflection.}\label{fig:chain_1}
\end{figure}
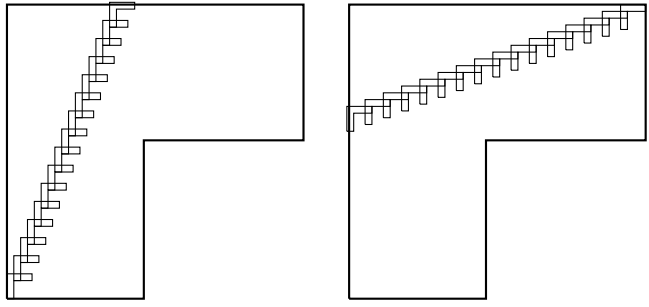

Reflecting this construction across the diagonal of the set $A_{k}$ it is possible to find a sequence that connects the left boundary of $A_{k}$ to its upper right boundary (see Figure \ref{fig:chain_1}).

We now take the first chain of events to be the corresponding events $D_{k-1}$, i.e., take $D_{k-1}(x)$ for the values of $x$ in \eqref{eq:first_chain} or in its reflection. This concludes the construction of the first chain.

For the second chain we consider
\begin{equation}\label{eq:second_chain}
y_{j}=(l_{k},L_{k})-j(l_{k-1},L_{k-1}), \,\,\, 1 \leq j \leq \frac{L_{k}}{L_{k-1}}.
\end{equation}
Again in this case we use a reflection argument and construct the events as in the first chain (see Figure \ref{fig:chain_2}).

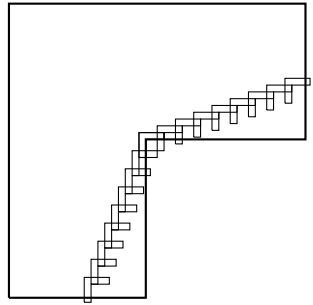
\begin{figure}[h]
\centering
\begin{tikzpicture}[scale=0.6]

\draw[-, thick]  (0,0)--(3,0)--(3,3.5)--(6.5,3.5)--(6.5,6.5)--(0,6.5)--(0,0);

\foreach \x in {0,1, ..., 8}{
\draw[-, shift={(2.85-0.15*\x,3.1-0.4*\x)}] (0,0)--(0.15,0)--(0.15,0.4)--(0.55,0.4)--(0.55,0.55)--(0,0.55)--(0,0);
\draw[-, shift={(2.85+0.4*\x,3.1+0.15*\x)}] (0,0)--(0.15,0)--(0.15,0.4)--(0.55,0.4)--(0.55,0.55)--(0,0.55)--(0,0);
}

\end{tikzpicture}
\caption{The second chain constructed.}\label{fig:chain_2}
\end{figure}

Take $M_{k}$ to be the set of all points $x \in \ZZ^{2}$ such that $A_{k-1}(x)$ is in some of the two chains described above. Observe that, by \eqref{eq:first_chain} and \eqref{eq:second_chain},
\begin{equation*}
|M_{k}| \leq 2\left(\frac{L_{k}+l_{k}}{L_{k-1}}+\frac{L_{k}}{L_{k-1}}\right) \leq 10l_{k-1}^{1/2},
\end{equation*}
that is exactly the first conclusion of the lemma.

Now, suppose that $D_{k}$ holds. We will prove that one event in each of the two chains necessarily occurs. Suppose not, and assume, without loss of generality, that all events in the second chain do not happen. In this case, every set $A_{k-1}(x)$ with $x$ as in \eqref{eq:second_chain} or in its reflection, has an open crossing by some function of $S$. If we concatenate these open paths (see Equation \eqref{eq:concatenation}), we obtain an open crossing of $A_{k}$, contradicting out assumption that $D_{k}$ holds.

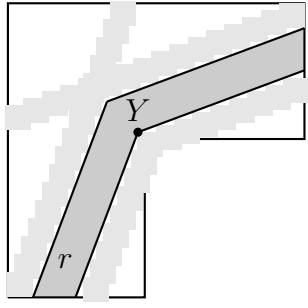
\begin{figure}[h]
\centering
\begin{tikzpicture}[scale=0.6]


\fill[black!20!] (0.55,0)--(1185/800,0)--(2.85, 3.65)--(6.5, 6.5-1185/800)--(6.5,5.95)--(239/110, 238/55);

\draw[-, thick]  (0,0)--(3,0)--(3,3.5)--(6.5,3.5)--(6.5,6.5)--(0,6.5)--(0,0);

\foreach \x in {0,1, ...,15}{

\fill[black!10!, shift={(0.15*\x,0.4*\x)}] (0,0)--(0.55,0)--(0.55,0.55)--(0,0.55)--(0,0);
\fill[black!10!, shift={(5.95-0.4*\x,5.95-0.15*\x)}] (0,0)--(0.55,0)--(0.55,0.55)--(0,0.55)--(0,0);
}

\foreach \x in {0,1, ..., 8}{
\fill[black!10!, shift={(2.85-0.15*\x,3.1-0.4*\x)}] (0,0)--(0.55,0)--(0.55,0.55)--(0,0.55)--(0,0);
\fill[black!10!, shift={(2.85+0.4*\x,3.1+0.15*\x)}] (0,0)--(0.55,0)--(0.55,0.55)--(0,0.55)--(0,0);
}

\draw[-, thick] (0.55,0)--(239/110, 238/55);
\draw[-, thick] (6.5,5.95)--(239/110, 238/55);

\draw[-, thick] (1185/800,0)--(2.85, 3.65);
\draw[-, thick] (6.5, 6.5-1185/800)--(2.85, 3.65);


\node[above] (a) at (2.85, 3.65){$Y$};
\fill[black] (2.85, 3.65) circle (0.1);
\node[right] (a) at (0.85, 0.8){$r$};
\end{tikzpicture}
\caption{The strip, the point $Y$ and the line $r$. Notice that we represented the boxes corresponding to the chains instead of the sets $A_{k-1}$ and that the line $r$ is defined by the vertices of these boxes.}\label{fig:strip}
\end{figure}

To get the distance estimate in Equation \eqref{eq:support_distance_renormalization} observe that there is a strip (see Figure \ref{fig:strip}) that splits the two chains. As a consequence, we can bound the distance in \eqref{eq:support_distance_renormalization} by the distance between the line $r$ and the point $Y$ of Figure \ref{fig:strip}. Since the line $r$ has equation
\begin{equation}\label{eq:line_r}
y=\frac{L_{k-1}}{l_{k-1}}\left(x-L_{k-1}\right),
\end{equation}
a simple computations yields
\begin{equation}\label{eq:estimate_1}
\begin{split}
\dist(B_{k-1}(x),B_{k-1}(y)) & \geq \dist(r,Y) \\
& = \frac{\left|L_{k-1}l_{k}-L_{k-1}l_{k-1}-l_{k-1}L_{k}-l_{k}^{2}-L_{k-1}^{2} \right|}{\sqrt{l_{k-1}^{2}+L_{k-1}^{2}}} \\
& \geq l_{k-1}\frac{1}{\sqrt{5}l_{k-1}}\left[L_{k}-\lfloor l_{k-1}^{1/2}\rfloor L_{k-1}+L_{k-1}^{2}+l_{k-1}^{2}+\frac{L_{k-1}^{2}}{l_{k-1}}\right] \\
& \geq l_{k-1} \frac{\lfloor l_{k-1}^{1/2}\rfloor}{\sqrt{5}} \left(\frac{1}{k}-\frac{1}{k-1}\right),
\end{split}
\end{equation}
for $k$ large enough.

Now, since $l_{k}$ has super-exponential growth (see \eqref{eq:scales}), it is easy to conclude that
\begin{equation}\label{eq:estimate_2}
\lim_{k \rightarrow \infty}\lfloor l_{k-1}^{1/2}\rfloor \left(\frac{1}{k}-\frac{1}{k-1}\right)=+\infty
\end{equation}

If we combine equations \eqref{eq:estimate_1} and \eqref{eq:estimate_2} and use that $\per(B_{k-1}) \leq 12 l_{k-1}$, it is easy to conclude that for $k$ large enough we have
\begin{equation*}
\dist(B_{k-1}(x),B_{k-1}(y)) \geq \usebigconstant{C:support_distance_percolation_1}(\per(B_{k-1}(x))+\per(B_{k-1}(y))) +\usebigconstant{C:support_distance_percolation_2},
\end{equation*}
which is exactly Estimate \eqref{eq:support_distance_renormalization}.
\end{proof}

\par The lemma above provides us with a way to estimate the probability $p_{k}$ in terms of $p_{k-1}$, if $k$ is large. Since we know the realisation of $D_{k}$ implies that two events of order $k-1$ with indices in $M_{k}$ hold, and that they satisfy \eqref{eq:support_distance_renormalization}, we can use \eqref{eq:simple_decoupling} with an union bound to obtain
\begin{equation}\label{eq:probability_decay}
p_{k+1} \leq |M_{k}|^{2}(p_{k}^{2}+H(\usebigconstant{C:support_distance_percolation_1} l_{k})),
\end{equation}
since the distance between the boxes is at least $\usebigconstant{C:support_distance_percolation_1} l_{k}$ and the error function $H$ is non-increasing. This will help us to conclude the proof of Theorem \ref{teo:percolation}, our next goal.

\begin{proof}[Proof of Theorem \ref{teo:percolation}.]

Take $\tilde{k} \geq k_{0}$ so that, for all $k \geq \tilde{k}$
\begin{equation}\label{eq:tilde_k_choice}
100\left(l_{k}^{-1}+l_{k}^{7}H(\usebigconstant{C:support_distance_percolation_1}l_{k})\right) \leq 1,
\end{equation}
where $H$ is the error function in \eqref{eq:simple_decoupling} and $\usebigconstant{C:support_distance_percolation_1}$ is the constant in \eqref{eq:support_distance}. Observe that this is possible by \eqref{eq:error_decay}, since $l_{k} \rightarrow +\infty$ as $k \rightarrow \infty$.

Suppose now that \eqref{eq:trigger} holds, i.e., for some $ k \geq \tilde{k}$, $p_{k} \leq l_{k}^{-4}$. Inductively, using \eqref{eq:probability_decay} we get
\begin{align*}
l_{n+1}^{4}p_{n+1} & \leq 100l_{n+1}^{4}l_{n}\left(p_{n}^{2}+H(\usebigconstant{C:support_distance_percolation_1}l_{n})\right)  \\
& \leq 100l_{n}^{7}\left(l_{n}^{-8}+H(\usebigconstant{C:support_distance_percolation_1}l_{n})\right) \leq 1,
\end{align*}
which concludes the proof of \eqref{eq:recurrence}.

Let us now verify that percolation occurs with positive probability. We will use an adaptation of the construction in the proof of Lemma \ref{lemma:renormalization}. Begin by observing that
\begin{equation}\label{eq:borel_cantelli}
\sum_{k=1}^{\infty}10l_{k}^{1/2}p_{k} < \infty.
\end{equation}

For each $k$, let $U_{k} \subset M_{k}$ to be the set of points $x \in \ZZ^{2}$ such that $D_{k-1}(x)$ is in the second chain constructed in the proof of Lemma \ref{lemma:renormalization}. By Borel-Cantelli Lemma, \eqref{eq:borel_cantelli} implies that only finitely many events in the collection $\{D_{k-1}(x), x \in U_{k} , k \in \NN\}$ can hold. Thus, we may assume that $D_{k-1}(x)$ does not hold for all $x \in U_{k}$ and all sufficiently large $k$. This implies that for each of these points $x$ it is possible to find an open crossing of $A_{k-1}(x)$ by some function of $S$. We use a concatenation of the crossings to find an infinite open path $f \in S$, which concludes the proof.
\end{proof}

\begin{remark}\label{remark:vacant_percolation}
If one is interested in the vacant set, the verification of Equation \eqref{eq:simple_decoupling} for non-decreasing functions allows to prove an analogous result from Theorem \ref{teo:percolation}, but looking for closed paths in $S$.
\end{remark}

\par It may be the case that the probability measure $\PP$ allows us to construct a family $(\mathcal{I}_{u})_{u \in \mathcal{U}}$, with either $\mathcal{U}=[0,1]$ or $\mathcal{U}=\RR_{+}$, of increasing subsets of $\ZZ^{2}$. In this case we can replace the correlation decay \eqref{eq:simple_decoupling} by the inequality
\begin{equation}\label{eq:sprinkling_decoupling}
\EE_{u}(fg) \leq \EE_{u(1-\varepsilon)}(f)\EE_{u(1-\varepsilon)}(g)+H(\varepsilon, d(B_{1},B_{2})),
\end{equation}
with error function $H:\RR_{+}\times \RR_{+} \to \RR_{+}$ that is non-increasing in each of the variables and satisfies that, for some $\delta>0$,
\begin{equation}\label{eq:error_decay_sprinkling}
\lim_{x \to +\infty}x^{7}H(x^{-\delta},x)=0.
\end{equation}

\par Theorem \ref{teo:percolation} can be extended for such values of $u \in \mathcal{U}$. Fix $u_{\infty} \in \mathcal{U}$, set
\begin{equation}\label{eq:u_samll}
u_{0}=u_{\infty}\prod_{k=0}^{\infty}(1-l_{k}^{-\delta}), \,\,\,\,\, 
 u_{k+1}=\frac{u_{k}}{(1-l_{k}^{-\delta})},
\end{equation}
and define
\begin{equation}\label{eq:renormalization_probability_sprinkling}
p_{k}=\PP_{u_{k}}[D_{k}].
\end{equation}

\par With these definitions, the proof of Theorem \ref{teo:percolation} carries on in the same way. In \eqref{eq:borel_cantelli}, we can replace $p_{k}$ by $\PP_{u_{\infty}}(D_{k})$  just by noticing that $\PP_{u}(D_{k})$ is non-increasing in $u \in \mathcal{U}$.

\par This generalization states that in order to conclude the existence of percolation for the density $u_{\infty}$, one needs to understand the probability of the existence of crossings in smaller densities.

\begin{remark}
The error decay on \eqref{eq:error_decay_sprinkling} is not sharp and may be modified to fit in other cases. Sometimes, for example, it may be the case that the error depends also on $u$ and not on $\varepsilon$ as in \eqref{eq:sprinkling_decoupling}.  In these cases, we only need to find $\tilde{k}$ large enough so that \eqref{eq:tilde_k_choice} holds for all $k \geq \tilde{k}$. To do so, one can change the scales $u_{k}$, but not the ones in \eqref{eq:scales}, since the proof of Lemma \ref{lemma:renormalization} strongly uses their growth rate.
\end{remark}

\subsection{Applications}\label{subsec:applications}
~
\par In this subsection we work on examples where Theorem \ref{teo:percolation} can be applied.

\bigskip
\bigskip

\par \textbf{Oriented percolation.} An easy application of Theorem \ref{teo:percolation} is to prove the existence of a supercritical phase in i.i.d. oriented site percolation. Here, we set $S$ as in Example \ref{ex:oriented_percolation_set} and look for a path in $S$ that only crosses open vertices. Independence trivially implies our decoupling assumption. This implies the existence of $n_{0}$ such that, if the probability of not crossing $A_{n_{0}}$ is small enough, then percolation using paths in $S$ is possible with positive probability. This is true if $p$ is large enough, since this probability tends to zero as $p$ goes to one.

\bigskip
\bigskip

\par \textbf{Oriented percolation for random interlacements.} The model of random interlacements was introduced by A.S. Sznitman in \cite{s}. This model is defined in $\ZZ^{d}$, for $d \geq 3$, and consists of a Poissonian cloud of doubly-infinite random walk trajectories. A non-negative parameter $u$ measures the intensity of the model, and $\mathcal{I}^{u}$ denotes the intelacement set at level $u$. Percolation in this setting has been extensively studied. Our focus is in proving oriented percolation in this setting using the set of paths $S$ given by Example \ref{ex:oriented_percolation_set}.

\par Although this model is defined for dimensions at least three, we study the behaviour of its intersection with a plane. Observe that this is not a limitation because the existence of an oriented path in the plane implies the existence of oriented percolation. We prove that if the parameter $u$ is large enough, then oriented percolation is possible in this plane.

\begin{teo}[Oriented percolation for random interlacements]
If the parameter $u$ is large enough (depending on the dimension $d$), then
\begin{equation}
\PP[\mathcal{I}^{u} \cap \ZZ^{2} \text{ contains an infinite path of } S]=1.
\end{equation}
\end{teo}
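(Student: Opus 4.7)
My plan is to apply the sprinkled extension of Theorem~\ref{teo:percolation} (the framework built around \eqref{eq:sprinkling_decoupling}--\eqref{eq:renormalization_probability_sprinkling}) to the increasing family $\mathcal{I}_{u} := \mathcal{I}^{u} \cap \ZZ^{2}$, where $\ZZ^{2}$ is identified with the sublattice $\ZZ^{2} \times \{0\}^{d-2} \subset \ZZ^{d}$, together with the oriented path set $S$ of Example~\ref{ex:oriented_percolation_set}. Translation invariance of $\mathcal{I}_{u}$ along $\ZZ^{2}$-shifts is inherited from the full $\ZZ^{d}$-invariance of the random interlacement law, so the only non-trivial inputs are the sprinkled decoupling and a trigger bound. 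Once these are in place, Theorem~\ref{teo:percolation} produces an infinite open oriented path with positive probability, and the standard ergodicity of $\mathcal{I}^{u}$ under translations upgrades this to probability one, since ``$\mathcal{I}^{u} \cap \ZZ^{2}$ contains an infinite $S$-path'' is translation invariant.

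For the sprinkled decoupling I would invoke the classical Sznitman estimate for random interlacements in its non-increasing form. For non-increasing $f_{1}, f_{2}$ with supports $B_{1}, B_{2} \subset \ZZ^{2}$, viewed as finite subsets of $\ZZ^{d}$ via the embedding above, and $L := \dist(B_{1}, B_{2})$, one has
\begin{equation*}
\EE_{u}(f_{1} f_{2}) \;\leq\; \EE_{u(1-\varepsilon)}(f_{1})\,\EE_{u(1-\varepsilon)}(f_{2}) \;+\; C\exp\!\bigl(-c\,\varepsilon^{2} u\, L^{d-2}\bigr).
\end{equation*}
Setting $H(\varepsilon, L) := C \exp(-c\, \varepsilon^{2} u_{\infty}\, L^{d-2})$ and choosing any $\delta \in (0, (d-2)/2)$, we have $H(x^{-\delta}, x) = C \exp(-c\, u_{\infty}\, x^{d-2-2\delta})$, which decays faster than any polynomial, so \eqref{eq:error_decay_sprinkling} is satisfied for every $d \geq 3$.

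For the trigger, let $\tilde{k}$ be the scale furnished by Theorem~\ref{teo:percolation}; by monotonicity of $H$ in $u_{\infty}$ it may be chosen uniformly for all $u_{\infty}$ above some threshold. The event $D_{\tilde{k}}$ forces at least one vertex of $A_{\tilde{k}}$ to lie outside $\mathcal{I}^{u}$, since whenever every site of $A_{\tilde{k}}$ is occupied the $S$-path travelling right along the bottom edge and then up along the right edge realises an open crossing. Using the classical identity $\PP_{u}(x \notin \mathcal{I}^{u}) = \exp(-u/g_{d}(0,0))$, where $g_{d}$ is the Green function of simple random walk on $\ZZ^{d}$ (finite for $d \geq 3$), a union bound yields
\begin{equation*}
p_{\tilde{k}} \;=\; \PP_{u_{\tilde k}}(D_{\tilde k}) \;\leq\; |A_{\tilde{k}}|\,\exp\!\bigl(-u_{\tilde{k}}/g_{d}(0,0)\bigr).
\end{equation*}
Since $u_{\tilde k} = u_{\infty} \prod_{j \geq \tilde k}(1 - l_{j}^{-\delta})$ is a fixed positive multiple of $u_{\infty}$, the bound is at most $l_{\tilde{k}}^{-4}$ for $u_{\infty}$ sufficiently large in terms of $d$.

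The main technical obstacle is to match the direction of sprinkling in the Sznitman decoupling to the non-increasing formulation used by Theorem~\ref{teo:percolation}: the standard references usually phrase the estimate for non-decreasing cylindrical events with sprinkling from $u$ up to $u(1+\varepsilon)$, whereas the non-increasing version sprinkling down to $u(1-\varepsilon)$ is required here. This version is obtained from exactly the same soft-local-time coupling of $\mathcal{I}^{u(1-\varepsilon)}$ and $\mathcal{I}^{u}$ used in the vacant-set literature, with an essentially identical error term. With this adaptation in hand, Theorem~\ref{teo:percolation} applies, and the ergodicity argument yields the claimed probability-one statement.
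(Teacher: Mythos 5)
Your argument follows essentially the same route as the paper: apply the sprinkled extension of Theorem~\ref{teo:percolation} to the embedded plane $\ZZ^{2}\times\{0\}^{d-2}$ with the path set of Example~\ref{ex:oriented_percolation_set}, verify the sprinkled decoupling and the trigger hypothesis, and upgrade to probability one by ergodicity. Two small points: the paper simply cites Theorem~1.1(ii) of \cite{pt}, which already gives the downward-sprinkled decoupling for non-increasing functions, so the direction-matching concern you raise is resolved by the cited reference; and the density in the exponential error term should be bounded below uniformly along the scale sequence (e.g.\ by $u_{0}$, or by assuming $u\geq 1$ as the paper does) rather than replaced by $u_{\infty}$, though this is only a constant-factor adjustment and does not affect the argument.
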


\par We use the extended version of Theorem \ref{teo:percolation}. The verification of \eqref{eq:sprinkling_decoupling} is done in \cite{pt}, and we restate it here:

\newconstant{c:decoupling_random_interlacements}

\begin{teo}[Theorem 1.1 of \cite{pt}] \label{teo:pt}
Let $A_{1}$ and $A_{2}$ be two non-intersecting subsets of $\ZZ^{d}$, with at least one of them finite. Let $s$ be the distance between them and $r$ be the minimum of their diameters. Then, for all $u > 0$ and $\varepsilon \in (0,1)$, 

\begin{itemize}

\item[i.] for any non-decreasing functions $f_{1}:\{0,1\}^{A_{1}} \rightarrow [0,1]$ and $f_{2}:\{0,1\}^{A_{2}} \rightarrow [0,1]$
\begin{equation}\label{eq:decoupling_random_interlacements}
\EE_{u}(f_{1}f_{2}) \leq \EE_{u(1+\varepsilon)}(f_{1})\EE_{u(1+\varepsilon)}(f_{2}) + \useconstant{c:decoupling_random_interlacements}(r+s)\exp(-\useconstant{c:decoupling_random_interlacements}^{-1}\varepsilon^{2}u s^{d-2});
\end{equation}

\item[ii.] for any non-increasing functions $f_{1}:\{0,1\}^{A_{1}} \rightarrow [0,1]$ and $f_{2}:\{0,1\}^{A_{2}} \rightarrow [0,1]$
\begin{equation}\label{eq:decoupling_random_interlacements_2}
\EE_{u}(f_{1}f_{2}) \leq \EE_{u(1-\varepsilon)}(f_{1})\EE_{u(1-\varepsilon)}(f_{2}) + \useconstant{c:decoupling_random_interlacements}(r+s)\exp(-\useconstant{c:decoupling_random_interlacements}^{-1}\varepsilon^{2}u s^{d-2}).
\end{equation}
\end{itemize}
\end{teo}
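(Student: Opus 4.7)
The plan is to exploit the Poisson point process (PPP) representation of random interlacements and use a sprinkling argument that absorbs precisely the trajectories responsible for the dependence between $f_{1}$ and $f_{2}$. Recall that $\mathcal{I}^{u}$ is the trace of a PPP $\omega^{u}$ on the space $W^{*}$ of doubly-infinite simple random walk trajectories modulo time shift, with intensity $u\cdot \nu$, where for any finite $K\subset \ZZ^{d}$ the $\nu$-mass of trajectories hitting $K$ equals $\capacity(K)$. I treat case (i) with $A_{1}$ finite; case (ii) is symmetric (one sprinkles \emph{down} instead of up and inclusions reverse).

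First, decompose $W^{*}$ according to which of $A_{1},A_{2}$ a trajectory visits:
\[
W_{1}^{*}=\{w^{*}\cap A_{1}\neq\emptyset,\ w^{*}\cap A_{2}=\emptyset\},\quad W_{2}^{*}\ (\text{symmetric}),\quad W_{12}^{*}=\{\text{hits both}\}.
\]
By Poisson thinning, the restrictions $\omega_{1}^{u},\omega_{2}^{u},\omega_{12}^{u}$ are independent Poisson processes; letting $\mathcal{I}_{i,\mathrm{alone}}^{u}$ and $\mathcal{I}_{\mathrm{both}}^{u}$ be their traces, $\mathcal{I}^{u}\cap A_{i}=(\mathcal{I}_{i,\mathrm{alone}}^{u}\cup \mathcal{I}_{\mathrm{both}}^{u})\cap A_{i}$, so the only source of correlation between $f_{1}(\mathcal{I}^{u})$ and $f_{2}(\mathcal{I}^{u})$ is $\mathcal{I}_{\mathrm{both}}^{u}$. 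Standard Green's function bounds give $P_{x}[\text{hit }A_{2}]\leq c\,\capacity(A_{2})/s^{d-2}$ for $x\in A_{1}$, hence $\nu(W_{12}^{*})\leq c\,\capacity(A_{1})\capacity(A_{2})/s^{d-2}$; this is the source of the $s^{d-2}$ factor in the exponent.

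Next, let $\tilde\omega$ be an independent PPP on $W^{*}$ of intensity $\varepsilon u\cdot \nu$, so $\omega^{u}\cup \tilde\omega$ is distributed as $\omega^{u(1+\varepsilon)}$. Split $\tilde\omega$ by the same trichotomy and retain the pieces $\tilde\omega_{i}$ that hit $A_{i}$ but not $A_{j}$ ($j\neq i$); these are mutually independent and independent of $\omega_{1}^{u},\omega_{2}^{u}$. The central claim is that one can couple everything on a single probability space so that, on an event $G$ with $\PP(G^{c})\leq c(r+s)\exp(-c^{-1}\varepsilon^{2}u s^{d-2})$,
\[
\mathcal{I}_{\mathrm{both}}^{u}\cap A_{i}\ \subset\ \tilde{\mathcal{I}}_{i}:=\text{trace of }\tilde\omega_{i}\text{ on }A_{i},\qquad i=1,2.
\]
Given this, since each $f_{i}$ is non-decreasing with support in $A_{i}$,
\begin{align*}
\EE_{u}(f_{1}f_{2}) &\leq \EE\bigl[f_{1}(\mathcal{I}_{1,\mathrm{alone}}^{u}\cup\tilde{\mathcal{I}}_{1})\,f_{2}(\mathcal{I}_{2,\mathrm{alone}}^{u}\cup\tilde{\mathcal{I}}_{2})\bigr]+\PP(G^{c})\\
&\leq \EE_{u(1+\varepsilon)}(f_{1})\,\EE_{u(1+\varepsilon)}(f_{2})+c(r+s)e^{-c^{-1}\varepsilon^{2}u s^{d-2}},
\end{align*}
using the independence of the two sides (they involve disjoint pieces of the PPP and its sprinkle) together with the inclusion $\mathcal{I}_{i,\mathrm{alone}}^{u}\cup\tilde{\mathcal{I}}_{i}\subset \mathcal{I}^{u(1+\varepsilon)}\cap A_{i}$ and monotonicity.

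The main obstacle is constructing the coupling that delivers site-wise domination $\mathcal{I}_{\mathrm{both}}^{u}\cap A_{i}\subset \tilde{\mathcal{I}}_{i}$: it is not enough that the sprinkled process has larger expected \emph{mass} than the ``both''-part of the original process, one must certify that the sprinkled trajectories actually cover the same sites of $A_{i}$. The natural tool is the soft local times technique: parametrise, for each entry point $x\in A_{i}$, both the Poisson collection of ``both''-trajectories entering $A_{i}$ at $x$ and the Poisson collection of $\tilde\omega_{i}$-trajectories starting at $x$ as marks of a single Poisson process on $A_{i}\times\RR_{+}$, then exploit the imbalance of their intensities (roughly $u\,e_{A_{i}}(x)\cdot s^{-(d-2)}$ against $\varepsilon u\,e_{A_{i}}(x)$) to arrange inclusion except on a bad set. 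Taking a union bound over $|A_{1}\cup A_{2}|\lesssim (r+s)^{O(1)}$ relevant starting points and applying a Poisson deviation estimate of the form $\PP[\mathrm{Po}(\mu)>M]\leq e^{-c(M-\mu)^{2}/M}$ with $\mu\sim u\,\capacity(A_{i})s^{-(d-2)}$ and $M-\mu\gtrsim \varepsilon u\,\capacity(A_{i})$, one obtains the advertised $\exp(-c^{-1}\varepsilon^{2}u s^{d-2})$ bound, the prefactor $c(r+s)$ absorbing polynomial terms from the union bound and the capacity/diameter comparison.
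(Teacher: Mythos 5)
This statement is not proved in the paper you are reading; it is quoted verbatim from Popov--Teixeira \cite{pt} (their Theorem~1.1 / 2.1) and used as a black box to verify the sprinkled decoupling hypothesis \eqref{eq:sprinkling_decoupling} for random interlacements. There is therefore no in-paper proof to compare your sketch against, and a full proof here would be out of scope for this paper.

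That said, your outline does track the actual strategy of \cite{pt}: the trichotomy of trajectories into $W_1^*$, $W_2^*$, $W_{12}^*$ by hitting pattern, Poisson thinning to isolate $W_{12}^*$ as the sole source of dependence, the superposition identity $\omega^u \cup \tilde\omega \sim \omega^{u(1+\varepsilon)}$ for sprinkling, and the soft local times coupling to turn a mass surplus into a site-wise inclusion. You are also right that the last step is the crux, and your description of it is the place where the sketch is materially incomplete. Two specific issues: (a) the soft local times construction does not simply index trajectories by their entry vertex $x\in A_i$ and compare two Poisson intensities per $x$; it compares cumulative intensity functions on the space of excursions (including the excursion's full behaviour in $A_i$, not just the entry point), and the domination event is controlled by comparing these accumulated densities, not by a per-vertex Poisson count — so the union bound over $|A_1\cup A_2|$ points is not the correct bookkeeping; and (b) your concentration computation $\exp(-c\,(M-\mu)^2/M)$ with $\mu\sim u\,\capacity(A_i)s^{-(d-2)}$ and $M-\mu\gtrsim \varepsilon u\,\capacity(A_i)$ produces an exponent $\varepsilon^2 u\,\capacity(A_i)\,s^{d-2}$, which does not reduce to the stated $\varepsilon^2 u\, s^{d-2}$ without an additional argument; in \cite{pt} the dependence on the sets enters through the entrance-measure comparison rather than through a raw capacity factor, and getting the exponent to be set-independent as stated is precisely one of the delicate points their proof handles. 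So: correct high-level plan, genuinely faithful to the reference, but the core coupling step and the exponent bookkeeping are gestured at rather than established.
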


\par If we assume $u \geq 1$, the theorem above verifies \eqref{eq:sprinkling_decoupling} for any $\delta < \sfrac{1}{2}$. This assures us the existence of $k_{0}$ such that, if the probability of not having a crossing of $A_{k_{0}}$ is small enough, then percolation is possible. Now, as $u \rightarrow \infty$, $u_{k_{0}} \rightarrow \infty$ and hence
\begin{equation}\label{eq:trigger_random_nterlacement}
\PP\left[A_{k_{0}} \subset \mathcal{I}^{u_{k_{0}}}\right] \rightarrow 1.
\end{equation}
This implies oriented percolation for large values of $u$.

\bigskip
\bigskip

\par \textbf{The vacant set of random interlacements for large dimensions.} We keep working on the random interlacements model, but now we focus on the vacant set. We want to prove that if $u$ is small enough, then oriented percolation occurs in the vacant set.

\par We will restrict ourselves in $u \in [0,1]$ and define the vacant set as
\begin{equation}\label{eq:vacant_set}
\mathcal{V}_{u}=\ZZ^{d} \setminus \mathcal{I}^{u}.
\end{equation}
Notice that the vacant sets form a non-increasing collection of random subsets. Since we are interested in the vacant set, we use Remark \ref{remark:vacant_percolation}.

\newconstant{c:decoupling_random_interlacements_2}

\par It follows from (2.15) of \cite{s} that, if $f$ and $g$ are two non-decreasing functions of the vacant set, with respective support on boxes $B \subset \ZZ^{2} \times \{0\}^{d-2}$ and $B+x \subset \ZZ^{2} \times \{0\}^{d-2}$, then
\begin{align}\label{eq:polynomial_decoupling}
\EE(f(\mathcal{V}_{u})g(\mathcal{V}_{u})) & \leq \EE(f(\mathcal{V}_{u}))\EE(g(\mathcal{V}_{u}))+\useconstant{c:decoupling_random_interlacements_2}\frac{u \capacity(B)^{2}}{d(B,B+x)^{d-2}} \\
& \leq \EE(f(\mathcal{V}_{u}))\EE(g(\mathcal{V}_{u}))+\useconstant{c:decoupling_random_interlacements_2}\frac{|B|^{2}}{d(B,B+x)^{d-2}}. \nonumber
\end{align}

\par This implies that if $d > 13$, assuming \eqref{eq:support_distance}, we have \eqref{eq:simple_decoupling} with an error that satisfies \eqref{eq:error_decay}.

\par Since this error is uniform on the parameter $u$, we can find $k_{0}$ such that, if
$\PP_{u}(D_{k_{0}})$ is small enough (here, $D_{k_{0}}$ looks into the existence of open crossings on the vacant sets), then percolation occurs with positive probability on the vacant set. But once again we have
\begin{displaymath}
\lim_{u \rightarrow 0} \PP_{u}[D_{k_{0}}] =0,
\end{displaymath}
and this concludes the proof.

\begin{remark}
The reason why we use the polynomial bound in \eqref{eq:polynomial_decoupling} instead of the exponential bound given by Theorem \ref{teo:pt} is the dependence of the later on the parameter $u$. We need to make $u \rightarrow 0$ and the bounds of Theorem \ref{teo:pt} get worse as $u$ decreases, while \eqref{eq:polynomial_decoupling} is true for all values of $u \in [0,1]$.
\end{remark}

\begin{question}
It remains to prove that oriented percolation occurs for the vacant set in all dimensions $d$. We suspect this is true. For a proof, one should find inspiration on the techniques developed in \cite{ssz}.
\end{question}

\bigskip
\bigskip

\par \textbf{Independent renewal chains.} As an application, we prove an analogous of Theorem \ref{teo:detection} for a different environment. This is the same setting described in Subsection 3.5 from \cite{hhsst}.

\par Fix the probability distribution $p=(p_{n})_{n \in \NN_{0}}$ in $\NN_{0}$ given by $p_{n}=Z^{-1}e^{-n^{\sfrac{1}{4}}}$, where $Z= \sum_{n \in \NN_{0}} e^{-n^{\sfrac{1}{4}}}$ is a normalizing constant. Define the transition rates as
\begin{equation*}
g(l,m)=
\begin{cases}
\delta_{l-1}(m), \,\, & \text{ if } l>0; \\
p_{m}, \,\, & \text{ otherwise}.
\end{cases}
\end{equation*}

\par This Markov chain is called the renewal chain with interarrival distribution $p$. Its stationary measure $q$ is given by
\begin{equation*}
q_{n}=\frac{1}{Z'}\sum_{j \geq n} e^{-j^{\sfrac{1}{4}}} \,\,\,\, \text{ and }\,\,\,\, Z'=\sum_{n \in \NN_{0}}\sum_{j \geq n} e^{-j^{\sfrac{1}{4}}}.
\end{equation*}

\newbigconstant{c:decoupling_irc}
\newconstant{c:decoupling_irc_small}

\par Now, for each $x \in \ZZ$, we consider an independent copy $(N_{x}(n))_{n \in \NN_{0}}$ of the Markov chain described above.  We prove that survival in the empty sites is possible for the set of paths $S_{R}$ from Example \ref{ex:detection_set}, if $R$ is large enough. Here we consider a site $(x,t) \in \ZZ^{2}$ open if $N_{x}(t)=0$.

\par Denote by $\PP_{q}$ the probability measure induced by this process with initial distribution given by independent copies of $q$ in each coordinate of $\ZZ$. In \cite{hhsst}, the authors prove that, for non-increasing functions $f_{1},f_{2}: \NN_{0}^{\ZZ^{2}} \to [0,1]$ with respective supports on boxes $B_{1}, B_{2} \subset \ZZ^{2}$ (in the sense of Equation \eqref{eq:support}),
\begin{equation*}
\EE_{q}[f_{1}f_{2}] \leq \EE_{q}[f_{1}]\EE_{q}[f_{2}]+\usebigconstant{c:decoupling_irc}(\per(B_{1})+\per(B_{2}))e^{-\useconstant{c:decoupling_irc_small}\dist^{\sfrac{1}{8}}},
\end{equation*}
where $\dist$ is the distance between the boxes $B_{1}$ and $B_{2}$. Observe that, if the two boxes have disjoint projections in the $x$-axis, then the functions $f_{1}$ and $f_{2}$ are independent.

\par This implies we can use Theorem \ref{teo:percolation}. It remains to verify condition \eqref{eq:trigger}. We proceed by taking $R_{k}=l_{k}+L_{k}+1$. Notice that we can bound
\begin{equation}\label{eq:trigger_irc}
\begin{split}
l_{k}^{4}p_{k} & \leq l_{k}^{4}\PP_{q} \left[\begin{array}{cl}
[0,l_{k}] \times [0,L_{k}] \text{ does not} \\ \text{have a vertical open crossing} \end{array}\right] \\
& \leq l_{k}^{4}L_{k}(1-q_{0})^{l_{k}}.
\end{split}
\end{equation}
If $k$ is large enough, the quantity above is smaller than one. For such value of $k$, we take $R=R_{k}=l_{k}+L_{k}+1$, and Equation \eqref{eq:trigger} is verified.

\bigskip
\bigskip

\par \textbf{Level sets in independent random walks.} This is a good example of how to apply Theorem \ref{teo:percolation} using condition \eqref{eq:sprinkling_decoupling}. Here we work with the discrete framework. At time zero, each integer site receives independently a $\poisson(\lambda)$ number of particles that move independently as discrete time random walks. We will verify that survival occurs on the level sets $\{\eta_{t}(x) \geq j\}$, for $j \in \NN_{0}$ fixed. Here, the set of open vertices is formed by all points $(x,t) \in \ZZ^{2}$ with at least $j$ particles in site $x$ at time $t$ and the set $S_{R}$ is given in Example \eqref{ex:detection_set}.

\par We use of the decoupling for this dynamics proved in \cite{hhsst}.
\begin{prop}[Corollary 3.1 from \cite{hhsst}]
Let $B_1 = ([a, b] \times [n, m]) \cap \ZZ^2$ and $B_2 = ([a', b'] \times [-n', 0]) \cap \ZZ^2$ be two
space-time boxes and assume that $n \geq c$, for some $c$ large enough. Assume that $f_{1}$ and $f_{2}$ are non-increasing random variables with support
in $B_1$ and $B_2$, respectively, taking values in $[0,1]$. Then, for any $\rho \geq 1$,
\begin{equation}
\EE_{\rho(1+n^{-1/16})}[f_{1} f_{2}] \leq \EE_{\rho(1+n^{-1/16})}[f_{1}] \,\,
\EE_{\rho}[f_2] + C \big(\per(B_{1}) + n\big)\,e^{-C^{-1} n^{1/8}}.
\end{equation}
\end{prop}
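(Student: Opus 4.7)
My plan is to combine a Poisson-superposition coupling with the Markov and reversibility structure of independent random walks, and use sprinkling to absorb the residual dependence. First I would decompose the higher-density system as an independent superposition $X_{\rho'} = X_\rho \sqcup X_{\mathrm{ex}}$, where $X_{\mathrm{ex}}$ is an independent particle system of density $\rho n^{-1/16}$ whose particles also evolve as simple random walks; this yields a pointwise domination $X_{\rho'} \succeq X_\rho$, which is compatible with non-increasing $f_1, f_2$ and lets sprinkling play the role of filling in possible deficits at density $\rho$.

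Next I would exploit the Markov property at the intermediate time $t^* = n/2$. Conditionally on the full configuration $\eta_{t^*}$, the past $(\eta_s)_{s\le t^*}$ and the future $(\eta_s)_{s\ge t^*}$ are independent, so setting $g(\eta) = \EE[f_1 \mid \eta_{t^*} = \eta]$ and $h(\eta) = \EE[f_2 \mid \eta_{t^*} = \eta]$, both non-increasing in $\eta$, one obtains
\begin{equation*}
\EE_{\rho'}[f_1 f_2] = \EE_{\rho'}\bigl[g(\eta_{t^*})\, h(\eta_{t^*})\bigr].
\end{equation*}
I would then use Gaussian displacement bounds for the simple random walk to localize these conditional expectations: $g(\eta)$ depends essentially only on $\eta|_{W_1}$ where $W_1$ is a window of radius $r \asymp n^{5/8}$ around $[a,b]$, since the probability that a random walk of length $\le n$ travels distance $\ge r$ is bounded by $e^{-c r^2/n} = e^{-c n^{1/4}}$. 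After a union bound over sites in the relevant space-time slab and over Poisson particle counts, the truncation error is at most $(\per(B_1) + n)\, e^{-C^{-1} n^{1/8}}$; a symmetric argument localizes $h$ to a window $W_2$ around $[a', b']$.

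Under $\PP_{\rho'}$ the configuration $\eta_{t^*}$ is iid Poisson($\rho'$), so if $W_1 \cap W_2 = \emptyset$ the factorisation is immediate. The sprinkling is used to handle the overlap: by FKG on the product Poisson measure, $g(\eta_{t^*})$ and $h(\eta_{t^*})$ are positively correlated, but the gap $\EE_\rho[h] - \EE_{\rho'}[h] > 0$ (positive since $h$ is non-increasing and $\rho<\rho'$) absorbs this covariance. Quantitatively, Chernoff bounds for Poisson variables show that conditioning on a value of $h(\eta_{t^*})$ biases each coordinate by at most the sprinkling gap $\rho n^{-1/16}$, which is exactly the room needed to replace $\EE_{\rho'}[h]$ by $\EE_\rho[h]$ while preserving an additive error of the claimed form.

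The main obstacle I expect is this last step: producing a sharp quantitative estimate for the sprinkled FKG-type decoupling on the product Poisson measure. One must track how conditioning on a bounded non-increasing $h$ with support in $W_2$ biases the marginal of $\eta_{t^*}|_{W_1\cap W_2}$ and show this bias is uniformly dominated by the density shift $n^{-1/16}$. The balance between the window radius $n^{5/8}$ (from random walk displacement) and the sprinkling gap $n^{-1/16}$ (absorbing the bias) is what produces the exponent $n^{1/8}$ in the error term.
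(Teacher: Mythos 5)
This proposition is not proved in the paper --- it is cited verbatim from \cite{hhsst} and used as a black box. The natural reference point is the paper's own proof of the analogous exclusion-process decoupling (Theorem~\ref{teo:decoupling_ep} via Lemma~\ref{lemma:coupling_ep}), which shows the intended strategy: condition at one temporal boundary of one box, not at a midpoint, and then use a one-sided coupling. Concretely, one conditions on $\eta_n$ (or, by Markov and reversibility, on $\sigma(\eta_s : s\geq n)$, which makes $f_1$ measurable), and proves a coupling lemma showing that the process run backward from a typical $\eta_n$ for time $n$ dominates, inside the relevant interval, a fresh Poisson field at the lower density $\rho$, up to a stretched-exponential failure probability. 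Since $f_2$ is non-increasing, this yields $\EE_{\rho'}[f_2\mid\eta_n]\leq \EE_{\rho}[f_2]+\text{error}$, and multiplying by $f_1\in[0,1]$ finishes the argument. The sprinkling gap $\rho'-\rho=\rho n^{-1/16}$ is spent \emph{inside} the coupling over the whole time window of length $n$.

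Your version diverges in a way that creates a genuine gap. By splitting at $t^*=n/2$ and writing $\EE[f_1f_2]=\EE[g(\eta_{t^*})h(\eta_{t^*})]$ with both $g,h$ non-increasing, you have produced two monotone functions of the \emph{same} Poisson random field, and FKG gives $\EE[gh]\geq\EE[g]\EE[h]$ --- exactly the inequality you need to \emph{overcome}, not to use. You acknowledge this and claim that sprinkling absorbs the covariance, asserting that ``conditioning on a value of $h(\eta_{t^*})$ biases each coordinate by at most the sprinkling gap $\rho n^{-1/16}$.'' That claim is false in general: take $h(\eta)=\charf{\{\eta_{t^*}(0)=0\}}$, which is bounded, non-increasing, and has support of size one; conditioning on $h=1$ drives the occupation at site $0$ to zero, a bias of order $\rho$, not $\rho n^{-1/16}$. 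There is no mechanism at a \emph{single} time slice that turns a small global density surplus into a uniform per-site bound on conditional bias --- that mechanism is precisely the dynamics over a macroscopic time window, which your midpoint split has thrown away. Moreover, the ``disjoint windows'' case you lean on does not cover the statement: the boxes $[a,b]$ and $[a',b']$ carry no separation assumption and can coincide, in which case $W_1=W_2$ and the factorisation step has no content. You should instead condition at one end of the time gap and prove (or invoke) a coupling-forward lemma of the type Lemma~\ref{lemma:coupling_ep} establishes for the exclusion process; the midpoint decomposition is a dead end here.
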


\par Now, if we take any two boxes $B_{1}$ and $B_{2}$, satisfying
\begin{equation}
\dist > 6(\per(B_{1})+\per(B_{2}))+c,
\end{equation}
for some constant $c$, we simply observe that, if the boxes are far away in space, the functions are independent. The reason to why this is true is that we are working on discrete time, and then no particle can cross both of them. On the other hand, if they are close in space, they are necessarily far away in time, and we can use the proposition above to conclude that
\begin{equation}\label{eq:decoupling_irw}
\EE_{\rho}[f_{1} f_{2}] \leq \EE_{\rho(1-\tilde{c}\dist^{-\sfrac{1}{16}})}[f_{1}] \,\,
\EE_{\rho(1-\tilde{c}\dist^{-\sfrac{1}{16}})}[f_2] + C \dist\,e^{-C^{-1} \dist^{1/8}},
\end{equation}
for some constants $\tilde{c}$ and $C$.

\par This verifies conditions \eqref{eq:sprinkling_decoupling} and \eqref{eq:error_decay_sprinkling} with $\delta=\sfrac{1}{16}$. Since we need to use this assumption in the proof of Theorem \ref{teo:percolation} with $\epsilon$ given by $l_{k}^{-\delta}$, we observe that $\dist \geq l_{k}$, for the sets taken in Lemma \ref{lemma:renormalization}.

\par Now, if we are given a density $\lambda_{\infty}$ for which we want to verify Theorem \ref{teo:detection}, we define the sequence $\lambda_{k}$ as in \eqref{eq:u_samll}, but with $l_{k}^{-\delta}$ replaced by $\tilde{c}l_{k}^{-\sfrac{1}{16}}$. The verification of the trigger assumption \eqref{eq:trigger} is done in the same way as in \eqref{eq:trigger_irc}.

\par This proves that, for any $\lambda>0$, if $R$ is large enough, survival is possible on top of level $j$.

\bigskip
\bigskip

\par \textbf{Independent random walks} Although Theorem \ref{teo:detection} is already known for this underlying dynamics, see \cite{stauffer} for the case when Brownian motions are considered, here we apply a combination of Remark \ref{remark:vacant_percolation} and condition \eqref{eq:sprinkling_decoupling} to prove that survival on top of the empty sites using the set of paths $S_{R}$ from Example \eqref{ex:detection_set}.

\par As a corollary of Lemma 6.5 from \cite{st} it is easy to obtain
\begin{lemma}
Let $B_1 = ([a, b] \times [n, m]) \cap \ZZ^2$ and $B_2 = ([a', b'] \times [-n', 0]) \cap \ZZ^2$ be two
space-time boxes and assume that $n \geq c$, for some $c$ large enough. Assume that $f_{1}$ and $f_{2}$ are non-decreasing random variables with support
in $B_1$ and $B_2$, respectively, taking values in $[0,1]$. Then, for any $\rho \in [r,R]$ and $\epsilon \in (0,1)$,
\begin{equation}
\EE_{\rho}[f_{1} f_{2}] \leq \EE_{\rho(1+\epsilon)}[f_{1}] \,\,
\EE_{\rho(1+\epsilon)}[f_2] + C \big(\per(B_{1}) + n\big)\,e^{-C^{-1} r \epsilon^{2} n^{1/4}}.
\end{equation}
\end{lemma}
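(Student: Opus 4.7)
The plan is to derive this lemma as a corollary of Lemma 6.5 of \cite{st}, which is a sprinkled decoupling for the independent random walks system formulated for non-increasing functions and a fixed density. The task is to translate that statement into the setup of the present paper, where $f_1, f_2$ are non-decreasing, and to read off the explicit error in the stated form.

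First, I would verify that the hypotheses line up. The boxes $B_1, B_2$ lie in disjoint time slabs separated by a gap at least $n \geq c$, and the functions are monotone with values in $[0,1]$. Since $f_1, f_2$ are \emph{non-decreasing} here, the sprinkling goes in the direction of increasing density: one couples the density $\rho$ field with the density $\rho(1+\epsilon)$ field by superimposing an independent Poisson field of intensity $\rho\epsilon$ at time zero. Under this coupling, the monotonicity of $f_i$ gives $f_i(\eta_\rho) \leq f_i(\eta_{\rho(1+\epsilon)})$, and this is what produces the factor $\EE_{\rho(1+\epsilon)}[f_1]\,\EE_{\rho(1+\epsilon)}[f_2]$ on the right-hand side, in place of the $\EE_{\rho(1-\epsilon)}$ expression used in the non-increasing version cited from \cite{st}.

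Second, I would trace the error $C(\per(B_1)+n)\exp(-C^{-1}r\epsilon^2 n^{1/4})$ to the Chernoff mechanism underlying \cite{st}. During a gap of length $n$, a random walk diffuses on a scale $\sqrt n$, so dependence between the two slabs is mediated by Poisson counts of particles over spatial windows of size of order $n^{1/4}$, a scale chosen to balance the concentration rate against the number of windows to be controlled. The Poisson concentration rate on such a window is proportional to $\rho\epsilon^2 n^{1/4}$, and the explicit $r$ in the exponent is simply the worst-case lower bound $\rho \geq r$. The combinatorial prefactor $\per(B_1)+n$ arises from a union bound over the relevant spatial windows and sites of $B_1$, together with time-translate endpoints.

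Third, the uniformity in $\rho \in [r, R]$ is automatic: only the lower bound $r$ enters the exponent (via Poisson concentration), while the upper bound $R$, together with the constraint $\epsilon \in (0,1)$, only affects the implicit constant $C$. I do not expect any serious obstacle; the work is essentially bookkeeping — aligning the direction of sprinkling with the direction of monotonicity of $f_1, f_2$, and matching the specific exponent $n^{1/4}$ with the partition scheme used in \cite{st}. Once the dictionary between the two formulations is set, the stated inequality follows immediately.
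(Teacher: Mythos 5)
Your reduction to Lemma~6.5 of \cite{st} is exactly what the paper does: the lemma is stated as a corollary of that reference with no further argument, so the route is the same. One point in your elaboration is conceptually off, however. The coupling you describe in your first paragraph --- superimposing an independent Poisson field of intensity $\rho\epsilon$ on the density-$\rho$ field at time zero --- only yields the pointwise stochastic domination $\eta^\rho\preceq\eta^{\rho(1+\epsilon)}$, hence $\EE_\rho[f_1 f_2]\leq\EE_{\rho(1+\epsilon)}[f_1 f_2]$; it does not produce the product form. The decoupling instead comes from conditioning on the time-zero configuration (which, via the Markov property and reversibility, separates $B_2\subset\ZZ\times[-n',0]$ from $B_1\subset\ZZ\times[n,m]$) and then coupling the \emph{conditional} law of the process at time $n$ given $\eta_0$ against a fresh stationary process at density $\rho(1+\epsilon)$. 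The sprinkle $\epsilon$ is what makes that second, harder coupling succeed across the time gap $n$, and its failure probability is exactly what gives the stretched-exponential error in $n$. Since $\eta_0$ after conditioning is a fixed configuration and not a Poisson field, ``superimpose a Poisson field'' is not the right picture for that step: the matching must be built particle by particle, in the spirit of the paper's Lemma~\ref{lemma:coupling_ep} for the exclusion process. None of this undermines the reduction, since the machinery lives in \cite{st}; it only corrects the account you give of why the lemma is a corollary.
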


\par From the lemma above, just like in the discussion before Equation \eqref{eq:decoupling_irw}, we conclude the existence of constants $A, B$ and $C$ such that, if
\begin{equation}
\dist = \dist(B_{1}, b_{2}) \geq A \left(\per(B_{1} + \per(B_{2})\right)+B,
\end{equation}
then
\begin{equation}
\EE_{\rho}[f_{1} f_{2}] \leq \EE_{\rho(1+\epsilon)}[f_{1}] \,\,
\EE_{\rho(1+\epsilon)}[f_2] + C \dist \,e^{-C^{-1} r \epsilon^{2} \dist^{1/4}}.
\end{equation}

\par This implies we can apply Therem \ref{teo:percolation}, using Remark \ref{remark:vacant_percolation} and condition \eqref{eq:sprinkling_decoupling}. The verification of \eqref{eq:trigger} is done just like in \eqref{eq:trigger_irc}.

\section{Decoupling for the exclusion process}\label{sec:decoupling}
~
\par This section is devoted to the proof of the exclusion process decoupling. This decoupling will be used to prove the correlation decay need in order to apply Theorem \ref{teo:percolation} in the proof of Theorem \ref{teo:detection}.

\par We split our discussion in three subsections. We begin with a brief review of some facts that will be used in our proof. In the second subsection, the decoupling is proved assuming Lemma \ref{lemma:coupling_ep}. The third subsection is devoted to the proof of Lemma \ref{lemma:coupling_ep}, a coupling that plays a central role in the proof of the decoupling.

\subsection{A brief review of the exclusion process}\label{subsec:ep}
~
The exclusion process $(\eta_{t})_{t \geq 0}$ on $\ZZ$ is a Markov process with state space $\{0,1\}^{\ZZ}$ and generator given by
\begin{equation}\label{eq:generator}
Lf(\eta)=\frac{1}{2}\sum_{x \in \ZZ} \sum_{h=\pm 1}\eta(x)(1-\eta(x+h))\left[f(\eta^{x,x+h})-f(\eta)\right],
\end{equation}
where $f: \{0,1\}^{\ZZ} \to \RR $ is any local function and $ \eta^{x,y} $ is the configuration given by
\begin{equation*}
(\eta^{x,y})(z)=\left\{\begin{array}{cl}
\eta(y),& \mbox{if}\,\,\, z=x,\\ 
\eta(x),& \mbox{if} \,\,\, z=y,\\ 
\eta(z),& \mbox{ otherwise.}
\end{array}
\right .
\end{equation*}

\par For each $ \rho \in [0,1] $, define $\mu_{\rho}$ as the product measure on $\{0,1\}^{\ZZ}$ with marginals given by
\begin{equation}\label{eq:invariant_measures}
\mu_{\rho}\{\eta : \eta(k)=1\}=1-\mu_{\rho}\{\eta: \eta(k)=0\}=\rho, \,\,\,\,\, \text{for all }\, k \in \ZZ.
\end{equation}

\par It is a well known fact that the process $(\eta_{t})_{t \geq 0}$ is reversible with respect to the measure $\mu_{\rho}$. We call the parameter $\rho$ the density of the process if its starting configuration is distributed as $\mu_{\rho}$. Denote by $\PP_{\rho}$ the distribution of the exclusion process $(\eta_{t})_{t \geq 0}$ with density $\rho$.

\par We also recall a classical graphical construction of the exclusion process that will be useful. This construction is made with the help of the interchange process, that we denote by $\gamma$. First consider an independent Poisson process of rate $\sfrac{1}{2}$ for each edge $(x,x+1)$ of $\ZZ$. We will represent the Poisson processes in the edges by arrows (as in Figure \ref{fig:interchange_process}). Observe that for each site $ x \in \ZZ$ and $t \geq 0$ there exists an almost sure unique path that starts at $(x,t)$, ends in $\ZZ \times \{0\}$, goes downwards and is forced to cross all arrows it encounters. We denote the end position of this path by $\gamma_{t}(x) \in \ZZ$, the label of the interchange process in site $x$ at time $t$ (see Figure \ref{fig:interchange_process}).

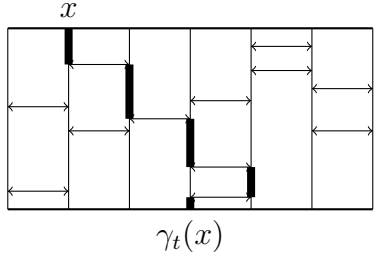
\begin{figure}[h]\label{fig:interchange_process}
\begin{center}
\begin{tikzpicture}[scale=0.8]

\draw[-, thick]  (-3,0) -- (3,0);
\draw[-, thick]  (-3,3) -- (3,3);

\foreach \x in {-3,-2,-1,0,1,2,3}
	{
	\draw[-] (\x,0) -- (\x, 3);
	}

\foreach \y in {0.3,1.7}
	{
	\draw[<->](-3,\y)--(-2,\y);
	}
\foreach \y in {1.3,2.4}
	{
	\draw[<->] (-2,\y) -- (-1,\y);
	}
\foreach \y in {1.5}
	{
	\draw[<->] (-1,\y) -- (0,\y);
	}
\foreach \y in {0.2,0.7,1.8}
	{
	\draw[<->] (0,\y) -- (1,\y);
	}
\foreach \y in {2.3,2.7}
	{
	\draw[<->] (1,\y) -- (2,\y);
	}
\foreach \y in {2,1.3}
	{
	\draw[<->] (2,\y) -- (3,\y);
	}
	
\draw[line width=3pt] (0,0)--(0,0.2);
\draw[line width=3pt] (1,0.2)--(1,0.7);
\draw[line width=3pt] (0,0.7)--(0,1.5);
\draw[line width=3pt] (-1,1.5)--(-1,2.4);
\draw[line width=3pt] (-2,2.4)--(-2,3);


\node[below] at (0,0) {$\gamma_{t}(x)$};
\node[above] at (-2,3) {$x$};

\end{tikzpicture}
\caption{The points $x$ and $\gamma_{t}(x)$.}
\end{center}
\end{figure}

\par Given an initial configuration $\eta_{0}$ for the exclusion process we obtain the configuration at time $t$ by setting
\begin{equation}\label{eq:graphical_construction}
\eta_{t}(x)=\eta_{0}(\gamma_{t}(x)), \,\,\,\,\, \text{for all } \, x \in \ZZ.
\end{equation}

\par This construction results in the Markov process with generator given by \eqref{eq:generator}. Observe that each particle as well as each hole in $\eta_{0}$ performs a continuous time random walk.

\par The space of configurations $\{0,1\}^{\ZZ}$ has a partial order (similar to the order in $\{0,1\}^{\ZZ^{2}}$ defined in Equation \eqref{eq:partial_order}) given by
\begin{equation}
\eta \preceq \xi \text{ if and only if } \eta(x) \leq \xi(x), \,\,\,\, \text{for all } \,x \in \ZZ.
\end{equation}
If we use the same Poisson clocks in the graphical construction presented above for two different initial conditions, this partial order is preserved.

\par When the starting configuration of the exclusion process is distributed as $\mu_{\rho}$, by reversibility it is possible to use the graphic construction presented above to construct the exclusion process for negative times.

\par Using a coupling of the measures $(\mu_{\rho})_{\rho \in [0,1]}$ that is increasing in the partial order of $\{0,1\}^{\ZZ}$, we can construct in the same probability space all the processes $(\eta_{t}^{\rho})_{t \in \RR, \rho \in [0,1]}$ in a way that
\begin{itemize}
\item[1.] $(\eta^{\rho}_{t})_{t \in \RR}$ is an exclusion process with density $\rho$;
\item[2.] if $\rho \leq \rho '$ then $ \eta_{t}^{\rho} \preceq \eta_{t}^{\rho '}$, for all $ t \in \RR$.
\end{itemize}

\subsection{Decoupling}
~
\par In this subsection we prove the exclusion process decoupling stated in Theorem \ref{teo:decoupling_ep}. In the proof we will assume the existence of the coupling stated in Lemma \ref{lemma:coupling_ep}, a central tool in the proof.

\par We will work with functions defined in the space of trajectories $\mathbf{S}=\text{D}_{\RR}\{0,1\}^{\ZZ}$ of the exclusion process. It will be useful to think of the domain of the functions as the set $\{0,1\}^{\ZZ \times \RR}$.

\par Notice that the correlation decay in \eqref{eq:correlation_decay_percolation} is not true for the exclusion process, as pointed out in the next remark.

\newconstant{c:correlation_decay}

\begin{remark}\label{remark:bad_correation_decay}
Recall the construction of the exclusion process from the interchange process in \eqref{eq:graphical_construction}. Using the independence between the configuration $\eta_{0}$ and the interchange process $\gamma$, we compute
\begin{align*}
\cov_{\rho}(\eta_{t}(0), \eta_{0}(0)) & = \EE_{\rho}(\eta_{t}(0) \eta_{0}(0))-\rho^{2} \\
& = \EE_{\rho}(\eta_{t}(0) \eta_{0}(0)(\mathbf{1}_{\{\gamma_t(0)=0\}}+\mathbf{1}_{\{\gamma_t(0)\neq 0\}}))-\rho^{2} \\
& = \rho^{2}\PP[\gamma_t(0)\neq 0]+\rho\PP[\gamma_t(0) = 0] - \rho^{2} \\
& = (\rho-\rho^{2})\PP[\gamma_t(0) = 0] \\
& \geq \frac{\useconstant{c:correlation_decay}}{\sqrt{t}},
\end{align*}
which proves that \eqref{eq:correlation_decay_percolation} does not hold. The last inequality is a consequence of the fact that $\gamma_{t}(0)$ has the distribution of a continuous time random walk.
\end{remark}

\par For the proof of Theorem \ref{teo:decoupling_ep}, there are two different cases to take care of: Either the horizontal distance between the boxes is large or the vertical distance is. In the first case, we only need to use some moderate deviation estimates to get the bounds we need. In the second case, we use a coupling between two exclusion process with densities $\rho < \rho'$. This coupling assures us that the process with density $\rho$ is dominated by the process with higher density in an interval $I$ if the time is large enough. The existence of this coupling is the content of the following lemma:

\newbigconstant{C:couplingep}
\newconstant{c:couplingep}
\begin{lemma}\label{lemma:coupling_ep}
There exist positive constants $\useconstant{c:couplingep}$ and $\usebigconstant{C:couplingep}$ such that the following holds. For $\rho < \rho' \in [0,1]$, any given interval $ I =[c,d] \subset \RR$ with $c,d \in \ZZ$ and time $t \geq \usebigconstant{C:couplingep}$, there exists a coupling $\PP$ of two exclusion process with independent initial conditions $\eta_{0} \sim \mu_{\rho}$ and $\xi_{0} \sim \mu_{\rho'}$ in a way that $(\xi_{s})_{s \geq 0}$ is independent of $\eta_{0}$ and
\begin{displaymath}
\PP \Big[\exists \, x \in I\cap\ZZ : \eta_{t}(x)>\xi_{t}(x)\Big] \leq \useconstant{c:couplingep} t(t+|I|)\exp\left\{-\useconstant{c:couplingep}^{-1}(\rho'-\rho)^{2}t^{\sfrac{1}{4}}\right\}. 
\end{displaymath}
\end{lemma}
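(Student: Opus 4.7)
The strategy is a multi-phase matching coupling. Split $[0,t]$ into $k$ phases of length $s=t/k$ and, at the start of each phase, pair every $\eta$-particle that has not yet been matched lying in an enlargement $\tilde I\supset I$ (large enough to contain everything that can reach $I$ by time $t$) with a nearby $\xi$-particle, where $(\xi_u)_{u\geq 0}$ is the already-sampled, $\eta_0$-independent copy of the exclusion process at density $\rho'$. Within a phase, couple the dynamics so that a matched pair shares Poisson clocks (basic coupling) until the first time the two particles occupy the same site, at which instant the $\eta$-particle coalesces with its $\xi$-partner and follows its motion for the remainder of the computation; unmatched $\eta$-particles are driven by an auxiliary pool of independent Poisson clocks. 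Once a pair has coalesced, the pointwise inequality $\eta(x)\leq\xi(x)$ is maintained at that location thereafter, so it suffices to prove that, with the required probability, every $\eta$-particle in $\tilde I$ has coalesced with some $\xi$-particle by time $t$.

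The matching step uses concentration of density. Subdivide $\tilde I$ into consecutive blocks of length $L=C(\rho'-\rho)^{-2}$ for a large absolute constant $C$. On such a block, $\mu_\rho$ and $\mu_{\rho'}$ place Binomial numbers of particles with expectations $\rho L$ and $\rho'L$ and fluctuations of order $\sqrt L$; with our choice of $L$ the surplus $(\rho'-\rho)L$ is of order $\sqrt{CL}$, which for $C$ large dominates the fluctuations, and a greedy block-by-block pairing assigns each $\eta$-particle to a distinct $\xi$-particle at distance at most $L$, with a per-block failure probability exponentially small in $C$. At the end of a phase we take a snapshot: pairs already coalesced are viewed as single particles, and because $\xi_s\sim\mu_{\rho'}$ by stationarity the same block argument furnishes a fresh matching of the residual $\eta$-particles at distance at most $L$.

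For a single matched pair initially at distance at most $L$, the difference of the two positions evolves during the phase like a random walk with standard deviation of order $\sqrt s$, a fact made rigorous through the Harris/interchange representation of the exclusion dynamics. Standard one-dimensional hitting-time estimates then bound the probability that the pair fails to meet during one phase by $CL/\sqrt s$, and since the randomness employed in different phases is essentially independent, the probability that a tagged particle remains unmatched after all $k$ phases is at most $(CL/\sqrt s)^k$. A union bound over the $O(t+|I|)$ particles that can reach $I$ by time $t$, together with a union bound over the blocks where the matching could fail, and an optimisation of $s=t/k$, lead to the announced estimate $\useconstant{c:couplingep}\,t(t+|I|)\exp\{-\useconstant{c:couplingep}^{-1}(\rho'-\rho)^2 t^{1/4}\}$. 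The main obstacle is the rigorous implementation of the coupling: one must arrange the graphical construction so that $(\xi_u)_{u\geq 0}$ remains independent of $\eta_0$, while the coalescence rule makes matched $\eta$-particles follow their $\xi$-partners and still guarantees that the marginal law of $\eta$ is exactly that of an exclusion process with initial condition $\eta_0$, and this consistency must be preserved through the exclusion interactions and the successive rematchings.
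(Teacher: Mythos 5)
Your overall architecture is the right one and matches the paper: enlarge $I$ to an interval $H$ big enough to contain everything that can reach $I$ by time $t$, cover $H$ with blocks, use Binomial concentration to greedily match $\eta$-particles to $\xi$-particles block by block, run the dynamics so matched pairs coalesce when they meet, and refresh the matching at a sequence of coupling times. However, there are two genuine gaps, one quantitative and one structural.

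\textbf{The block length is miscalibrated.} You take blocks of length $L=C(\rho'-\rho)^{-2}$ with $C$ an \emph{absolute} constant. Then the per-block matching failure probability is $e^{-cC}$, a constant that does not decay with $t$. Summed over $\Theta\bigl(t(t+|I|)/L\bigr)$ block--phase pairs, this contribution is polynomially \emph{growing} in $t$, while the claimed bound $t(t+|I|)\exp\{-c^{-1}(\rho'-\rho)^{2}t^{1/4}\}$ requires it to decay stretched-exponentially. The error term in the conclusion tells you what the block length must be: the per-block concentration failure must itself be of order $\exp\{-c(\rho'-\rho)^{2}t^{1/4}\}$, which forces $L\sim t^{1/4}$. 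The paper indeed takes $L=\lfloor t^{1/4}\rfloor$ and $k\sim t^{1/4}$ phases of length $t^{3/4}$; with your $L$ fixed in $t$, no choice of $k$ rescues the estimate.

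\textbf{The in-phase coupling mechanism does not work.} You say a matched pair ``shares Poisson clocks (basic coupling) until the first time the two particles occupy the same site, at which instant the $\eta$-particle coalesces.'' In the graphical (interchange) construction, running both processes on the \emph{same} clocks makes any two labels at distinct sites perform exactly the same sequence of exchanges; their displacement is rigidly preserved and they never meet. Coalescence therefore never occurs under shared clocks, and the relative position is certainly not a diffusing random walk. What the paper does, and what you need, is the opposite logic: introduce \emph{two independent} families of Poisson clocks $N^{1},N^{2}$; let $\xi$ always follow $N^{2}$; let an $\eta$-particle follow $N^{1}$ as long as it is \emph{separated} from its $\xi$-partner (so the pair's relative displacement is a genuine continuous-time random walk), and switch the $\eta$-particle onto $N^{2}$ only at sites where it coincides with its $\xi$-partner (so that after meeting they move together). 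One then has to check separately that this rule still produces a bona fide exclusion process for the $\eta$-marginal; this is a nontrivial point that your sketch acknowledges at the end but does not resolve, and it is precisely what forces the two-clock construction rather than a naive shared-clock coupling.
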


\par The proof of this lemma is contained in the next subsection. We now use it to conclude Theorem \ref{teo:decoupling_ep}.

\begin{proof}[Proof of Theorem \ref{teo:decoupling_ep}]
Let $d_{H}$ and $d_{V}$ denote the horizontal and vertical distances between the boxes $B_{1}$ and $B_{2}$:
\begin{equation*}
d_{H}=\inf \{|x-y|:(x,t) \in B_{1} \text{ and } (y,s) \in B_{2}\},
\end{equation*}
and
\begin{equation*}
d_{V}=\inf \{|t-s|:(x,t) \in B_{1} \text{ and } (y,s) \in B_{2}\}.
\end{equation*}

Assume first that
\begin{equation}\label{eq:large_horizontal_distance}
d_{H} \geq 3(\per(B_{1})+\per(B_{2})+d_{V}).
\end{equation}

In this case, observe that, if a particle (or a hole) of the exclusion process touched both boxes, it jumped at least $d_{H}$ times in at most $\per(B_{1})+\per(B_{2})+d_{V}$ units of time. Since these particles (as well as the empty sites) move as random walks, the number of jumps in a given period of time has Poisson distribution. This implies that
\begin{multline}
\PP\left[\begin{array}{cl}
\text{a fixed particle (or hole)}  \\ \text{touches both boxes}\end{array}\right] \\ \leq  \PP\left[\poisson(\per(B_{1})+\per(B_{2})+d_{V}) \geq d_{H}\right]
\leq e^{-\useconstant{c:poisson_concentration_3}(d_{V}+d_{H})},
\end{multline}
with $\useconstant{c:poisson_concentration_3}>0$ given by Lemma \ref{concentration_poisson_3} in the Appendix.

Now we only need to count how many particles can touch both boxes. Fix an arbitrary site between the two boxes. Each particle that touches both boxes must cross this fixed site. This implies that we can bound the number of particles by the number clocks that ring in the neighboring edges of this site. It turns out that this has also Poisson distribution with parameter $\per(B_{1})+\per(B_{2})+d_{V}$. Hence, with probability at least $1-e^{-\useconstant{c:poisson_concentration_3}(d_{V}+d_{H})}$, there are at most $d_{H}$ particles that can cross this site. Using a union bound, we get
\begin{align*}
\PP\left[\begin{array}{cl}
\text{some particle (or hole)}  \\ \text{touches both boxes}\end{array}\right] & \leq  \PP\left[\begin{array}{cl}
\text{more than $d_{H}$ clocks ring in the} \\ \text{neighboring edges of the fixed site}\end{array}\right] \\
& + d_{H}\PP\left[\begin{array}{cl}
\text{a fixed particle (or hole)}  \\ \text{touches both boxes}\end{array}\right] \\
& \leq (1+d_{H})e^{-\useconstant{c:poisson_concentration_3}(d_{V}+d_{H})}.
\end{align*}

Now, if we condition on the trajectories inside $B_{1}$, we can split the expectation below according to the existence of particles that touch both boxes and conclude that
\begin{equation*}
\EE_{\rho}(f_{1}f_{2}) \leq \EE_{\rho}(f_1)\EE_{\rho}(f_2)+(1+d_{H})e^{-\useconstant{c:poisson_concentration_3}(d_{V}+d_{H})},
\end{equation*}
that is stronger than \eqref{eq:decoupling_estimate}.

Assume now that \eqref{eq:large_horizontal_distance} does not hold, i.e., assume that $d_{H} \leq 3(\per(B_{1})+\per(B_{2})+d_{V})$. This, combined with Equation \eqref{eq:distance_hypothesis}, implies that
\begin{equation}\label{eq:time_distance}
d_{V} \geq \frac{\usebigconstant{c:decouplingep}}{4}.
\end{equation}
If we take $\usebigconstant{c:decouplingep} \geq 4\usebigconstant{C:couplingep}$, the equation above will allow us to use Lemma \ref{lemma:coupling_ep}. In this case we use a different approach.

We will assume that the boxes have the form
\begin{align*}
B_{1}=[\tilde{a}, \tilde{b}]\times[-\tilde{t},0], \\
B_{2}=[a,b]\times[t,t+s].
\end{align*}
Figure \ref{fig:boxes} can be used as a reference in this part of the proof.

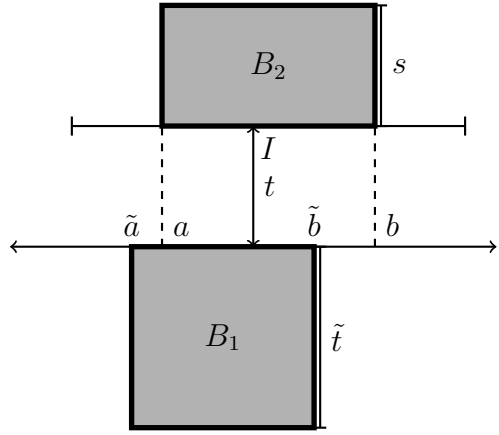
\begin{figure}[h]\label{fig:boxes}
\begin{center}
\begin{tikzpicture}[scale=0.8]

\draw[<->, thick]  (-4,0) -- (4,0);

\fill [black!30!](-2,-3) rectangle (1,0);
\draw [line width=2pt] (-2,-3) rectangle (1,0);
\node at (-0.5,-1.5) {$B_{1}$};

\fill [black!30!](-1.5,2) rectangle (2,4);
\draw [line width=2pt] (-1.5,2) rectangle (2,4);
\node at (0.25,3) {$B_{2}$};

\draw[|-|, thick](-3,2)--(3.5,2);
\node[below] at (0.25,2) {$I$};

\draw[<->, thick]  (0,0) -- (0,2);
\node[right] at (0,1) {$t$};

\draw[thick] (2.1,2)--(2.1,4);
\draw[thick] (2,2)--(2.2,2);
\draw[thick] (2,4)--(2.2,4);
\node[right] at (2.1,3) {$s$};

\draw[thick] (1.1,0)--(1.1,-3);
\draw[thick] (1,0)--(1.2,0);
\draw[thick] (1,-3)--(1.2,-3);
\node[right] at (1.1,-1.5) {$\tilde{t}$};


\node[above] at (-2,0) {$\tilde{a}$};
\node[above] at (1,0) {$\tilde{b}$};

\draw[thick, dashed] (-1.5,0)--(-1.5,2);
\node[above right] at (-1.5,0) {$a$};

\draw[thick, dashed] (2,0)--(2,2);
\node[above right] at (2,0) {$b$};

\end{tikzpicture}
\caption{The boxes $B_{1}$, $B_{2}$ and the interval $I$ (used in the proof of Theorem \ref{teo:decoupling_ep}).}
\end{center}
\end{figure}

Let $\mathcal{F}=\sigma\left(\eta_{u};u \leq 0\right)$ and use the Markov property to get
\begin{align}\label{eq:expectation_markov_property}
\EE_{\rho}(f_{1}f_{2})& = \EE_{\rho}(\EE_{\rho}(f_{1}f_{2}|\mathcal{F})) \nonumber \\
& = \EE_{\rho}(f_{1}\EE_{\rho}(f_{2}|\mathcal{F})) \\
& = \EE_{\rho}(f_{1}\EE_{\rho}(f_{2}|\eta_{0})). \nonumber
\end{align}
To estimate the conditional expectation above, we apply Lemma \ref{lemma:coupling_ep} with $I=[a-2s-t,b+2s+t]$ (see Figure \ref{fig:boxes}) and define the event
\begin{displaymath}
A=\left\{\eta\,:\,\begin{array}{cl}
\text{all particles of $\eta$ that pass through} \\ \text{the box $B_{2}$ are inside $I$ at time $t$}\end{array}\right\}.
\end{displaymath}

Now, we split the conditional expectation in estimate \eqref{eq:expectation_markov_property}, use Lemma \ref{lemma:coupling_ep}, and the fact that both functions $f_{1}$ and $f_{2}$ are positive, bounded by one and non-decreasing to get
\begin{align*}
\EE_{\rho}(f_{1}f_{2})& = \EE_{\rho}\Big(f_{1}(\eta)\EE_{\rho}\Big(f_{2}(\eta)\Big|\eta_{0}\Big)\Big) \\
& \leq \EE\Big(f_{1}(\eta)\EE\Big(f_{2}(\xi)\charf{\{\forall \, x \in I\cap\ZZ : \eta_{t}(x) \leq \xi_{t}(x)\}}\charf{\{A\}}\Big|\eta_{0}\Big)\Big) \\
& \quad +\EE\Big(f_{1}(\eta)\EE\Big(f_{2}(\xi)(\charf{\{\exists \, x \in I\cap\ZZ : \eta_{t}(x) > \xi_{t}(x)\}}+\charf{\{A^{c}\}})\Big|\eta_{0}\Big)\Big) \\
& \leq \EE_{\rho}(f_1)\EE_{\rho'}(f_2)+\PP\left[A^{c}\right] +\PP\left[\exists \, x \in I\cap\ZZ : \eta_{t}(x)>\xi_{t}(x)\right].
\end{align*}

To bound the last probability above we use Lemma \ref{lemma:coupling_ep}. Now all we need to do is estimate the probability of $A^{c}$. We use a similar argument to the one used in the first part of the proof. Begin by observing that in order for a particle that is at the $k$-th site at the right of $I$ at time $t$ to enter the box $B_{2}$ it is necessary that it jumps more than $2s+t+k$ times before time $t+s$. We also know that the number of jumps of a particle during the time interval $[t,t+s]$ has distribution Poisson distribution with parameter $s$. This allows us to estimate
\begin{displaymath}
\PP[A^{c}] \leq 2\sum_{k=0}^{+\infty}\PP[\poisson(s) \geq 2s+t+k] \leq 2\sum_{k=0}^{+\infty}e^{-\useconstant{c:poisson_concentration_1}(t+k)} = c e^{-\useconstant{c:poisson_concentration_1} t},
\end{displaymath}
where in the second inequality we used a simple large deviation estimate given by Lemma \ref{concentration_poisson} in the Appendix.

If we put all this together we get:
\begin{align}\label{eq:final_estimate_decoupling}
\EE_{\rho}(f_{1}f_{2})  & \leq \EE_{\rho}(f_1)\EE_{\rho'}(f_2)+c e^{-\useconstant{c:poisson_concentration_1} t} \nonumber \\
& +\useconstant{c:couplingep} t(t+|I|)\exp\left\{-\useconstant{c:couplingep}^{-1}(\rho'-\rho)^{2}t^{\sfrac{1}{4}}\right\} \\
& \leq \EE_{\rho'}(f_1)\EE_{\rho'}(f_2)+ \useconstant{c:couplingep} t(t+|I|)\exp\left\{-\useconstant{c:couplingep}^{-1}(\rho'-\rho)^{2}t^{\sfrac{1}{4}}\right\}, \nonumber
\end{align}
by possibly changing the constants in the last estimate.

Now, since $t=d_{V}$ and $d_{H} \leq 3(\per(B_{1})+\per(B_{2})+d_{V})$, we have
\begin{align*}
\dist & \leq \sqrt{2}(d_{H} + d_{V}) \leq \sqrt{2}\left(4d_{V}+3(\per(B_{1})+\per(B_{2})\right) \leq \sqrt{2}\left(\frac{\dist}{2}+4d_{V}\right),
\end{align*}
and hence $\dist \leq 4\left(1-\frac{\sqrt{2}}{2}\right)^{-1} d_{V}$. Substituting this on estimate \eqref{eq:final_estimate_decoupling} concludes the proof.
\end{proof}

\subsection{Coupling}\label{subsec:coupling_ep}
~
\par In this subsection we construct the coupling of Lemma \ref{lemma:coupling_ep}. We begin by giving an informal description of the coupling and then we make all the estimates need to get the domination.

\par Consider two independent initial configurations $\eta_{0} \sim \mu_{\rho}$ and $\xi_{0} \sim \mu_{\rho'}$ with $\rho<\rho'$. In our coupling, we want to obtain domination in an interval $I$ for a large time $t$. Due to the bounded velocity that the particles have, we only need to look at particles that at time zero are inside a sufficiently large interval $H$ that contains $I$.  Once we have a bound on the probability that some particle spends time outside $H$ and is inside $I$ at time $t$ we can restrict ourselves to particles that stay inside the interval $H$ for all times before time $t$.

\par We know that each particle, in both processes, performs a random walk. We want our coupling to behave in a way that if two particles of different process are in the same site of $H$ at some time $s \leq t$, then they move together from this time on.

\par With this greedy strategy it is not possible to get good bounds on the probabilities we need. To get around this problem, at time zero we will match the particles in pairs that will stay together if they meet.

\par We would like that these particles do not take a long time to do so and to control this we need to assure that they are close at time zero. Therefore, we introduce a partition of the interval $H$ with intervals $(I_{j})_{j=1}^{N}$ of controlled length. Due to the difference of densities, we expect that with high probability each of the intervals $I_{j}$ has more particles of the configuration $\xi_{0}$ than particles of $\eta_{0}$. When this happens, we can match all the particles of $\eta_{0}$ to some particle of $\xi_{0}$ in a way that they belong to the same interval of the covering at time zero.

\par Once we have the couples at time zero we need to set the evolution. We will make use of two independent copies of the graphical construction of the exclusion process presented in Subsection \ref{subsec:ep}. We make the process $\xi$ follow one of them and the evolution of the process $\eta$ will alternate between the two graphical constructions in order to get the property that coupled particles stay together.

\par We can get bounds on the probability that two particles do not meet up to time $t$, but the decay is not as good as the one in Lemma \ref{lemma:coupling_ep}. To get the desired bound we have to repeat the same procedure more than one time. So we split the time interval $[0,t]$ into smaller intervals $[t_{i-1},t_{i})$, where $0=t_{0} < t_{1}< \dots <t_{k}=t$, and set the evolution on these intervals. When we reach the end point $t_{i}$ we take another matching (this is done in a way that couples that already met stay together) and let the system evolve once again. This will allow us to get stretched exponential bounds as claimed in Lemma \ref{lemma:coupling_ep}.

\bigskip

\par Now we present the rigorous construction of this coupling. First we introduce the intervals we will consider and the matching that we need. The second step is to set the evolution of the coupled process and the last step is to repeat this procedure.

\par Given the interval $I=[a,b]$, we define $H=[a-\lceil 3t \rceil,b+\lceil 3t \rceil]$ and cover it with disjoint subintervals with length $L=\lfloor t^{\sfrac{1}{4}} \rfloor$. Let us call $(I_{j})_{j=1}^{N}$ these intervals. Observe that we have at most $|H|$ intervals in the covering. Figure \ref{fig:intervals} can be used to keep track of the notation. It may be necessary to increase the size of $H$ to make sure that all the intervals of the covering $(I_{j})_{j=1}^{N}$ have exactly $L$ integer points. Notice that we need to increase the size of $H$ by at most $L$.

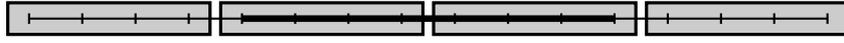
\begin{figure}[h]\label{fig:intervals}
\centering
\begin{tikzpicture}[scale=0.7]

\foreach \x in {-7.5,-3.5,0.5,4.5}
	{
	\fill[black!20!] (\x-0.4,-0.3) rectangle (\x+3.4,0.3);
	\draw[very thick](\x-0.4,-0.3) rectangle (\x+3.4,0.3);
	}

\draw[thick] (-7.5,0)--(7.5,0);
\foreach \x in {-7.5,-6.5,-5.5,-4.5,-3.5,-2.5,-1.5,-0.5,0.5,1.5,2.5,3.5,4.5,5.5,6.5,7.5}
	{
	\draw[thick](\x,-0.1)--(\x,0.1);
	}
\draw[line width=2.5](-3.5,0)--(3.5,0);
\end{tikzpicture}
\caption{The grid is the interval $H$, the thicker line is the interval $I$ and the covering $(I_{j})_{j=1}^{N}$ is represented by the gray rectangles.}
\end{figure}

\par We want to match particles that are inside the same interval of the partition $(I_{j})_{j=1}^{N}$. It is necessary to control the number of particles inside each one of these intervals for the given configurations. This leads us to define $\sum_{x \in I_{j}} \eta(x)=\sigma_{j}(\eta)$,  the number of particles inside the interval $I_{j}$ for the configuration $\eta$.

\begin{claim} If $\eta_{0} \sim \mu_{\rho} $, $\xi_{0} \sim \mu_{\rho'}$ are initial configurations with $ \rho < \rho'$ and $\bar{\rho}=\sfrac{1}{2}(\rho+\rho')$ then
\begin{displaymath}
\PP \left[\min\{\sigma_{j}(\xi_{0})\} \leq \bar{\rho}L\right] \leq |H|\exp\left\{-\frac{L(\rho'-\rho)^{2}}{8}\right\},
\end{displaymath}
and
\begin{displaymath}
\PP\left[\max\{\sigma_{j}(\eta_{0})\} \geq \bar{\rho}L \right] \leq |H|\exp\left\{-\frac{L(\rho'-\rho)^{2}}{8}\right\}.
\end{displaymath}
\end{claim}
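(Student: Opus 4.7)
The claim is a straightforward concentration estimate followed by a union bound, so I will not expect any real obstacle. The key observation is that, since $\mu_\rho$ and $\mu_{\rho'}$ are product measures on $\{0,1\}^\ZZ$, for each interval $I_j$ of the covering the quantity $\sigma_j(\xi_0) = \sum_{x \in I_j} \xi_0(x)$ is a sum of $L$ independent Bernoulli$(\rho')$ random variables, and $\sigma_j(\eta_0)$ is analogously a sum of $L$ independent Bernoulli$(\rho)$ variables. Their means are $\rho' L$ and $\rho L$ respectively, while the threshold $\bar\rho L$ is exactly midway between them, so each tail event corresponds to a deviation of size $\tfrac{1}{2}(\rho'-\rho)L$.

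The plan is to apply Hoeffding's inequality (or equivalently a Chernoff bound) for sums of bounded independent random variables. For the $\xi_0$ bound, write
\begin{equation*}
\PP\bigl[\sigma_j(\xi_0) \leq \bar\rho L\bigr] = \PP\bigl[\sigma_j(\xi_0) - \rho' L \leq -\tfrac{1}{2}(\rho'-\rho)L\bigr] \leq \exp\Bigl\{-\tfrac{1}{2}L(\rho'-\rho)^2\Bigr\},
\end{equation*}
which is in fact stronger than the bound $\exp\{-L(\rho'-\rho)^2/8\}$ claimed. A symmetric computation using Hoeffding for the upper tail of Bernoulli$(\rho)$ sums yields the same estimate for $\PP[\sigma_j(\eta_0) \geq \bar\rho L]$.

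To conclude, I would take a union bound over the $N$ intervals of the covering. Since each interval has exactly $L$ integer points (by the convention adopted above the claim, after possibly slightly enlarging $H$), we have $N \leq |H|/L \leq |H|$, so
\begin{equation*}
\PP\bigl[\min_j \sigma_j(\xi_0) \leq \bar\rho L\bigr] \leq N \exp\Bigl\{-\tfrac{1}{2}L(\rho'-\rho)^2\Bigr\} \leq |H|\exp\Bigl\{-\tfrac{L(\rho'-\rho)^2}{8}\Bigr\},
\end{equation*}
and likewise for $\max_j \sigma_j(\eta_0)$. Because the argument uses only product-measure independence within each $I_j$ and a standard large-deviation estimate, there is no serious obstacle; the looser constant $1/8$ in the exponent is simply chosen to leave room for the analogous upper-tail bound and for any Chernoff-type derivation, and it is what will be needed for the subsequent matching argument in the coupling construction.
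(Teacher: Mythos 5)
Your proof is correct and follows essentially the same route as the paper: binomial concentration for each interval $I_j$ (the paper invokes its Corollary \ref{binconcentration}, which is Azuma/Hoeffding applied to a $\text{Binomial}(L,p)$) followed by a union bound over the at most $|H|$ intervals. The only cosmetic difference is that you inline the standard Hoeffding bound $\exp\{-2s^2/L\}$ and thereby obtain the sharper exponent $L(\rho'-\rho)^2/2$, which you then relax to the claimed $L(\rho'-\rho)^2/8$, exactly the constant the paper's appendix corollary produces.
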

\begin{proof}
Since the invariant measures are product measures, the number of particles in a given interval has the distribution of a sum of i.i.d. random variables that assume only value 0 and 1. This claim is a consequence of a simple large deviation bound, see Corollary \ref{binconcentration} in Appendix.
\end{proof}

\begin{remark}\label{remark:concentration}
\par Notice that the last claim implies
\begin{multline}\label{eq:concentration_initial_configuration}
\PP\left[\exists \, j \leq N : \sigma_{j}(\eta_{0}) \geq \sigma_{j}(\xi_{0}) \right] \leq \PP\left[\min\{\sigma_{j}(\xi_{0})\} \leq \bar{\rho}L \right] \\
+ \PP\left[\exists j \leq N : \sigma_{j}(\eta_{0}) \geq \sigma_{j}(\xi_{0}) \geq \bar{\rho}L \right]
 \leq 2|H|\exp\left\{-\frac{L(\rho'-\rho)^{2}}{8}\right\}.
\end{multline}
It is really important to notice also that in the estimate above we do not need to assume independence between the configurations $\eta_{0}$ and $\xi_{0}$.
\end{remark}

\par When two configurations $(\eta,\xi)$ are not in the event above, we call it a \emph{good pair of configurations} and denote this by $ \eta \preceq_{I} \xi $. In a good pair of configurations, the matching is possible.

\par This matching must satisfy two important properties. The first condition is that if two particles are in the same site, they are paired. The other property we need is that two matched particles are in the same interval of the partition $(I_{j})_{j=1}^{N}$.

\par Suppose we are given a pair $(\eta, \xi)$ of good configurations. It is easy to construct a deterministic pairing of the particles inside each of the intervals $(I_{j})_{j=1}^{N}$ satisfying the properties listed above. We fix from now on any deterministic construction. Figure \ref{fig:exclusion_pairing} shows an example of a matching between two configurations.

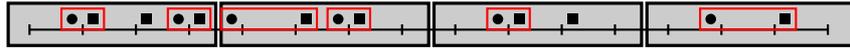
\begin{figure}[h]\label{fig:exclusion_pairing}
\centering
\begin{tikzpicture}[scale=0.7]

\foreach \x in {-7.5,-3.5,0.5,4.5}
	{
	\fill[black!20!] (\x-0.4,-0.3) rectangle (\x+3.5,0.5);
	\draw[very thick](\x-0.4,-0.3) rectangle (\x+3.5,0.5);
	}

\draw[thick] (-7.5,0)--(7.5,0);
\foreach \x in {-7.5,-6.5,-5.5,-4.5,-3.5,-2.5,-1.5,-0.5,0.5,1.5,2.5,3.5,4.5,5.5,6.5,7.5}
		\draw[thick](\x,-0.1)--(\x,0.1);

\foreach \x in {-6.5,-4.5,-1.5,-3.5,1.5,5.5}
	{
	\fill[black] (\x-0.2,0.2) circle (0.1);
	}

\foreach \x in {-6.5,-5.5,-4.5,-2.5,-1.5,1.5,2.5,6.5}
	{
	\fill[black] (\x+0.1,0.1) rectangle (\x+0.3,0.3);
	}
	
\foreach \x in {-6.5,-4.5,-1.5,1.5}
	{
	\draw[red, thick] (\x-0.4,0.4) rectangle (\x+0.4, 0);
	}
\draw[red, thick] (5.1,0.4) rectangle (6.9,0);
\draw[red, thick] (-3.9,0.4) rectangle (-2.1,0);

\end{tikzpicture}
\caption{A matching of two configurations. Balls represent the process $\eta$ and squares represent the configuration $\xi$.}
\end{figure}

\par Now that we have the matching, it is possible to set the evolution in our coupling. Keep in mind that we start with two independent configurations $\eta_{0} \sim \mu_{\rho}$ and $\xi_{0} \sim \mu_{\rho'}$ on $\ZZ$, with $ \rho < \rho'$. We need auxiliary random variables for the evolution: Consider two families of independent Poisson processes $(N_{t}^{x,i})_{t \geq0, x \in \ZZ, i=1,2}$ with rate $\sfrac{1}{2}$. Assume also that the Poisson processes are independent of the configurations $\eta_{0}$ and $\xi_{0}$.

\par For the process $\xi$, associate each edge $(x,x+1)$ with the Poisson process $(N_{t}^{x,2})_{t \geq 0}$ and use the graphical construction given in \eqref{eq:graphical_construction} of Subsection \ref{subsec:ep}. The matching is used to evolve the process $\eta$. If $(\eta_{0},\xi_{0})$ is not a good pair of configurations, we use $(N_{t}^{x,1})_{t \geq0, x \in \ZZ}$ for $\eta$ in the same way we did with the process $\xi$. Suppose now that $ \eta_{0} \preceq_{I} \xi_{0} $ and fix an edge $(x,x+1)$. The occupations for the process $\eta$ in the sites $x$ and $x+1$ are exchanged according to $N^{x,2}$ if one of these sites has a pair of matched particles. Otherwise, the occupation changes in these sites for the process $\eta$ obey the exponential times given by $N^{x,1}$. Figure \ref{fig:coupling_evolution} presents some examples of evoutions.

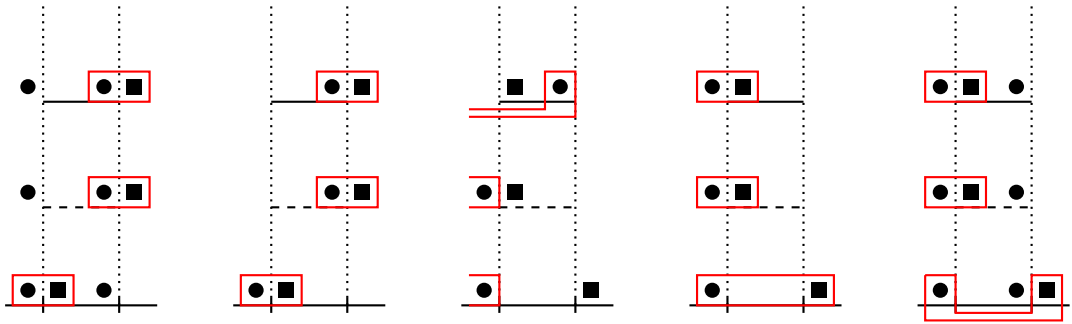
\begin{figure}[h]\label{fig:coupling_evolution}
\begin{center}
\begin{tikzpicture}

\draw[thick] (-1,0)--(1,0);
\foreach \x in {-0.5,0.5}
	{
	\draw[thick](\x,-0.1)--(\x,0.1);
	}

\draw[dotted, thick](-0.5,0)--(-0.5,4);
\draw[dotted, thick](0.5,0)--(0.5,4);

\draw[dashed, thick](-0.5,1.3)--(0.5,1.3);
\draw[thick](-0.5,2.7)--(0.5,2.7);

\foreach \x in {-0.5,0.5}
	{
	\fill[black] (\x-0.2,0.2) circle (0.1);
	}

\fill[black] (-0.4,0.1) rectangle (-0.2,0.3);
	
	\draw[red, thick] (-0.9,0.4) rectangle (-0.1, 0);

\foreach \x in {-0.5,0.5}
	{
	\fill[black] (\x-0.2,1.5) circle (0.1);
	}

\fill[black] (0.6,1.4) rectangle (0.8,1.6);

\draw[red, thick] (0.1,1.7) rectangle (0.9,1.3);

\foreach \x in {-0.5,0.5}
	{
	\fill[black] (\x-0.2,2.9) circle (0.1);
	}

\fill[black] (0.6,2.8) rectangle (0.8,3);

\draw[red, thick] (0.1,3.1) rectangle (0.9, 2.7);

\begin{scope}[shift={(3,0)}]
\draw[thick] (-1,0)--(1,0);
\foreach \x in {-0.5,0.5}
	{
	\draw[thick](\x,-0.1)--(\x,0.1);
	}

\draw[dotted, thick](-0.5,0)--(-0.5,4);
\draw[dotted, thick](0.5,0)--(0.5,4);

\draw[dashed, thick](-0.5,1.3)--(0.5,1.3);
\draw[thick](-0.5,2.7)--(0.5,2.7);

\fill[black] (-0.7,0.2) circle (0.1);
\fill[black] (-0.4,0.1) rectangle (-0.2,0.3);
	
	\draw[red, thick] (-0.9,0.4) rectangle (-0.1, 0);

\fill[black] (0.3,1.5) circle (0.1);
\fill[black] (0.6,1.4) rectangle (0.8,1.6);

\draw[red, thick] (0.1,1.7) rectangle (0.9,1.3);

\fill[black] (0.3,2.9) circle (0.1);
\fill[black] (0.6,2.8) rectangle (0.8,3);

\draw[red, thick] (0.1,3.1) rectangle (0.9, 2.7);

\end{scope}

\begin{scope}[shift={(6,0)}]

\draw[thick] (-1,0)--(1,0);
\foreach \x in {-0.5,0.5}
	{
	\draw[thick](\x,-0.1)--(\x,0.1);
	}

\draw[dotted, thick](-0.5,0)--(-0.5,4);
\draw[dotted, thick](0.5,0)--(0.5,4);

\draw[dashed, thick](-0.5,1.3)--(0.5,1.3);
\draw[thick](-0.5,2.7)--(0.5,2.7);

\fill[black] (-0.7,0.2) circle (0.1);
\fill[black] (0.6,0.1) rectangle (0.8,0.3);

\draw[red, thick] (-0.9,0.4) -- (-0.5, 0.4)--(-0.5,0)--(-0.9,0);

\fill[black] (-0.7,1.5) circle (0.1);
\fill[black] (-0.4,1.4) rectangle (-0.2,1.6);
	
\draw[red, thick] (-0.9,1.7) -- (-0.5, 1.7) -- (-0.5,1.3) -- (-0.9, 1.3);

\fill[black] (0.3,2.9) circle (0.1);
\fill[black] (-0.4,2.8) rectangle (-0.2,3);
	
\draw[red, thick] (-0.9,2.6) -- (0.1,2.6) -- (0.1,3.1) -- (0.5, 3.1) -- (0.5,2.5) -- (-0.9,2.5);

 \end{scope}

\begin{scope}[shift={(9,0)}]

\draw[thick] (-1,0)--(1,0);
\foreach \x in {-0.5,0.5}
	{
	\draw[thick](\x,-0.1)--(\x,0.1);
	}

\draw[dotted, thick](-0.5,0)--(-0.5,4);
\draw[dotted, thick](0.5,0)--(0.5,4);

\draw[dashed, thick](-0.5,1.3)--(0.5,1.3);
\draw[thick](-0.5,2.7)--(0.5,2.7);

\fill[black] (-0.7,0.2) circle (0.1);
\fill[black] (0.6,0.1) rectangle (0.8,0.3);

\draw[red, thick] (-0.9,0.4) rectangle (0.9,0);

\fill[black] (-0.7,1.5) circle (0.1);
\fill[black] (-0.4,1.4) rectangle (-0.2,1.6);
	
\draw[red, thick] (-0.9,1.7) rectangle (-0.1, 1.3);

\fill[black] (-0.7,2.9) circle (0.1);
\fill[black] (-0.4,2.8) rectangle (-0.2,3);
	
\draw[red, thick] (-0.9,3.1) rectangle (-0.1, 2.7);

 \end{scope}

\begin{scope}[shift={(12,0)}]

\draw[thick] (-1,0)--(1,0);
\foreach \x in {-0.5,0.5}
	{
	\draw[thick](\x,-0.1)--(\x,0.1);
	}

\draw[dotted, thick](-0.5,0)--(-0.5,4);
\draw[dotted, thick](0.5,0)--(0.5,4);

\draw[dashed, thick](-0.5,1.3)--(0.5,1.3);
\draw[thick](-0.5,2.7)--(0.5,2.7);

\foreach \x in {-0.5,0.5}
	{
	\fill[black] (\x-0.2,0.2) circle (0.1);
	}

\fill[black] (0.6,0.1) rectangle (0.8,0.3);

\draw[red, thick] (-0.9,0.4) -- (-0.5,0.4) -- (-0.5,-0.1) -- (0.5,-0.1) -- (0.5,0.4) -- (0.9, 0.4) -- (0.9,-0.2) -- (-0.9,-0.2) -- (-0.9,0.4);

\foreach \x in {-0.5,0.5}
	{
	\fill[black] (\x-0.2,1.5) circle (0.1);
	}
\fill[black] (-0.4,1.4) rectangle (-0.2,1.6);
	
\draw[red, thick] (-0.9,1.7) rectangle (-0.1, 1.3);

\foreach \x in {-0.5,0.5}
	{
	\fill[black] (\x-0.2,2.9) circle (0.1);
	}
\fill[black] (-0.4,2.8) rectangle (-0.2,3);
	
\draw[red, thick] (-0.9,3.1) rectangle (-0.1, 2.7);

 \end{scope}

\end{tikzpicture}
\caption{Some cases of the evolution in our coupling. We use the same conventions of Figure \ref{fig:exclusion_pairing}. The Poisson process $N^{1}$ is represented by the lines and $N^{2}$ is represented by the dashed lines.}
\end{center}
\end{figure}

\par By construction, $\xi$ performs an exclusion process. We need to see that the same happens for the process $\eta$.

\begin{claim} The process $\eta$ in this coupling is an exclusion process.
\end{claim}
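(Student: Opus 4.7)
The plan is to derive the marginal infinitesimal generator of $\eta$ from the joint dynamics of the coupling and verify it coincides with the generator $L$ in \eqref{eq:generator}. The key observation I want to make rigorous is that at each edge $(x,x+1)$ the total rate at which $\eta$ attempts an exchange is always $\tfrac{1}{2}$, regardless of whether the matched-pair regime or the unmatched one is currently in force.

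First I would describe the joint dynamics on the augmented state $(\eta_t, \xi_t, \pi_t)$, where $\pi_t$ is the current pairing (determined by the initial pairing together with the rule that paired particles move together once they have met). For each edge $(x,x+1)$ the transitions split into two regimes. If no matched pair sits at $x$ or $x+1$, then $N^{x,1}$ triggers an exchange in $\eta$ only and, independently, $N^{x,2}$ triggers an exchange in $\xi$ only, each at rate $\tfrac{1}{2}$. If a matched pair sits at $x$ or $x+1$, then $N^{x,2}$ simultaneously exchanges both $\eta$ and $\xi$ at rate $\tfrac{1}{2}$, while $N^{x,1}$ is inert.

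Next I would apply the joint generator $\mathcal{L}$ to a local function $f$ depending only on $\eta$. In the first regime the $\xi$-only transitions leave $f$ invariant, so only the $N^{x,1}$ clock contributes, giving $\tfrac{1}{2}(f(\eta^{x,x+1}) - f(\eta))$. In the second regime the simultaneous exchange contributes $\tfrac{1}{2}(f(\eta^{x,x+1}) - f(\eta))$ as well, via $N^{x,2}$. Both regimes yield the same expression, so summing over edges gives
\[ \mathcal{L} f = \frac{1}{2} \sum_{x \in \ZZ} \bigl( f(\eta^{x,x+1}) - f(\eta) \bigr), \]
which agrees with $Lf$ in \eqref{eq:generator}: the factors $\eta(x)(1-\eta(x+h))$ there merely encode that $f(\eta^{x,x+h}) - f(\eta) = 0$ when $\eta(x) = \eta(x+h)$.

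The main point to check carefully is that the computed rate of the transition $\eta \to \eta^{x,x+1}$ depends only on $\eta$, not on $\xi$ or on $\pi$; this is what allows the marginal $\eta$ to be autonomously Markov in the first place. The computation above confirms this: in both regimes the rate equals $\tfrac{1}{2}$ whenever $\eta(x) \neq \eta(x+1)$. Combined with $\eta_0 \sim \mu_\rho$ and the standard identification of a Markov process by its generator and initial distribution, this shows that the marginal law of $\eta$ is exactly that of the exclusion process with density $\rho$.
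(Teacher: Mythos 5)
Your argument is correct and is a cleaner, more systematic packaging of the same underlying observation that the paper uses, but it does take a different route. Both proofs hinge on the fact that the $\eta$-exchange rate at each edge is $\tfrac12$ regardless of regime; you extract this via a generator computation on the augmented state $(\eta,\xi,\pi)$ plus a lumpability check (the rate of $\eta\to\eta^{x,x+1}$ does not depend on $\xi$ or $\pi$), whereas the paper argues more constructively: it first uses a Borel--Cantelli argument to find, on each side, infinitely many edges whose both clocks are silent up to time $T$, thereby breaking $\ZZ$ into finite blocks that evolve autonomously, and then observes inside a block that ``waiting for whichever clock is currently in force'' always yields a fresh $\text{Exp}(\tfrac12)$ residual time by memorylessness, so the block process is exactly a finite-volume exclusion process. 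The advantage of the paper's decomposition is precisely what your proposal leaves implicit: on the infinite lattice the joint process must be shown to be well posed before one can talk about its generator acting on local functions, and the finite-block reduction settles this. Your approach buys a more transparent identification of the marginal generator (and an explicit statement of the lumping criterion, which the paper never names), but to be fully rigorous it should either borrow the paper's finite-block reduction or invoke a standard graphical-construction well-posedness argument before passing to generators.
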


\begin{proof}
We will prove that up to time $T>0$ the process $\eta$ is an exclusion process. Notice that, by Borel Cantelli Lemma, there exist infinitely many edges that do not ring for neither type up to time $T$. This implies that we can split the integer lattice into intervals that do not exchange particles until time $T$. It is not hard to see that in each one of these intervals the process $\eta$ behaves like an exclusion process: Simply wait until the first of the clocks we are using rings and then, if necessary, update the clocks to mark the next interchange time, according to the coupling. This is exactly an exclusion process in a finite set. This observation implies the claim.
\end{proof}

\par There are some features about this construction that are important to mention. Observe that, for positive times, it is possible to have two particles on the same site (one from each process) that are not matched. The second observation is that the process $\xi$ is independent of $\eta_{0}$, since its evolution depends only on the Poisson processes and its own initial condition.

\par Finally, observe that the distance between a pair of matched particles (that we call $(Z_{s})_{s \geq 0}$) follows the law of a continuous time symmetric random walk $(X_{s})_{s \geq 0}$ sped up by a factor of 2 that dies when it reaches the origin. Besides, since the matched particles lie in the same interval of the partition $(I_{j})_{j=1}^{N}$, the initial position of $Z_{s}$ is at most $L$.Using the reflection principle for random walks and also the heat kernel estimates presented in Appendix \ref{ap_heatkernel} we obtain:
\begin{align}\label{eq:meeting_in_one_step}
\PP & \left[ \begin{array}{cl}
\text{a fixed pair matched of particles} \\ \text{do not meet before time $t$}
\end{array}
\right] \leq \max_{0 \leq k \leq L}  \PP_{k}\left[\inf_{u\leq t}Z_{u} >0 \right] \nonumber \\
& \leq \max_{0 \leq k \leq L}  \PP_{k}\left[\inf_{u\leq 2t}X_{u} >0 \right] = \max_{0 \leq k \leq L} \PP_{0}\left[\sup_{u\leq 2t}X_{u} <k \right] \\
& = \PP_{0}\left[\sup_{u\leq 2t}X_{u} <L \right] = 1-\PP_{0}\left[\sup_{u\leq 2t}X_{u} \geq L \right] \nonumber \\
& \leq 1-2\PP_{0}\left[X_{2t} > L \right] = \PP_{0}\left[|X_{2t}| \leq L \right] \nonumber \\
& = \sum_{k=-L}^{L}\PP_{0}\left[X_{2t} =k \right] \leq \frac{\useconstant{c:continuous_heat_kernel}(2L+1)}{\sqrt{2t}} \leq t^{\sfrac{-1}{8}}, \nonumber
\end{align}
if $t$ is large enough, since $L=\lfloor t^{\sfrac{1}{4}} \rfloor$.

\par The decay obtained in the last estimate is not good enough to get the bounds we need in the error term. To improve this, we change the pairs at some fixed times, obeying the same matching rule. This implies that the particles that already met remain together and give a new chance for those that did not meet their pair yet.

\par Let the \emph{coupling times} be the sequence $\{kt^{\sfrac{3}{4}}\}_{k=1}^{\lfloor t^{\sfrac{1}{4}} \rfloor}$. At these times, we remake the pairing and continue the evolution as explained before. Notice that if a particle has met its couple before some coupling time, then in the new pairing, this particle receives the same partner, since they are in the same site.

\par Let us now list all the possible ways that domination might fail to hold. First, since all the pairings are made inside the interval $H$, we must consider the case where some particle of the process $\eta$ spends time outside $H$ and at time $t$ is inside the interval $I$. To bound this probability, we can simply observe that the endpoints of the interval $H$ are at linear distance from the interval $I$ and use concentration on the number of particles that can make such journey.

\par Once we know all the particles remain inside $H$ all the time up to time $t$, we look at the coupling times. At these times, the matching is remade. Hence, if the configurations are not good for any of them, our coupling fails. To bound this probability we will make use of Remark \ref{remark:concentration}.

\par Now, if we assure also that in all coupling times the configurations are good, the only possibility is that a particle of the process $\eta$ does not find its couple in any of its allowed attempts. With the aid of \eqref{eq:meeting_in_one_step} we can bound this last probability. Our task now is to estimate the probability of all events described above.

\bigskip

\par We begin by setting
\begin{equation}\label{eq:excursion_outside_event}
A=\left\{\begin{array}{cl}
 \text{there are particles of the process $\eta$ that spend time} \\ \text{outside $H$ before time $t$ and are inside $I$ at time t}\end{array}\right\}.
\end{equation}

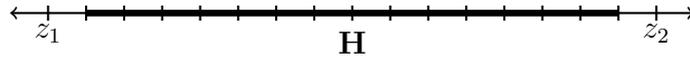
\begin{figure}[h]\label{fig:points}
\begin{center}
\begin{tikzpicture}

\draw[thick, <->] (-4.5,0)--(4.5,0);
\foreach \x in {-4,-3.5,-3,-2.5,-2,-1.5,-1,-0.5,0,0.5,1,1.5,2,2.5,3,3.5,4}
	{
	\draw[thick](\x,-0.1)--(\x,0.1);
	}
\draw[line width=2.5](-3.5,0)--(3.5,0);

\node[below] at (-4,0) {$z_{1}$};
\node[below] at (4,0) {$z_{2}$};
\node[below] at (0,-0.1) {\textbf{H}};
\end{tikzpicture}
\caption{The interval H and the points $z_{1}$ and $z_{2}$.}
\end{center}
\end{figure}

\par Let $z_{1}$ be the rightmost site at the left of $H$ and $z_{2}$ the leftmost site at the right of $H$ (see Figure \ref{fig:points}). If a particle makes an excursion outside $H$, it must necessarily pass through $z_{1}$ or $z_{2}$. Since the number of clocks that ring up to time $t$ in the neighboring edges of each of these points is a random variable with Poisson distribution with parameter $t$, we have a good control on the number of particles that spend some time in $z_{1}$. Now, if a particle passes through $z_{1}$ (or $z_{2}$, by symmetry) and at time $t$ it is in the interval $I$, it necessarily jumped at least $\lceil 3t \rceil$ steps in time at most $t$. Since we also know that all particles evolve as random walks, we conclude that in time $t$, the number of jumps that a given particle performs is distributed as Poisson with parameter $t$. Since the particle has at most time $t$ to travel from outside $H$ to $I$ we can estimate:

\begin{align}\label{eq:excursion_outside}
\PP [A] & \leq 2\PP\left[\begin{array}{cl}
\text{the number of clocks in the neighbouring edges of $z_{1}$} \\ \text{that ring before time $t$ is bigger than $3t$}\end{array}\right] \nonumber \\
& +2\PP\left[\begin{array}{cl}
\text{there are at most $3t$ particles that passes through $z_{1}$} \\ \text{before time $t$ and at least one of them is inside $I$ at time $t$ }
\end{array}\right]  \\
& \leq 2\Big(\PP[\text{Po}(t)>3t]+3t\PP[\text{Po}(t)>3t]\Big) \nonumber \\
& \leq 2(e^{-t}+3te^{-t}) \leq (6t+2)e^{-t}. \nonumber
\end{align}

\par Now we focus on the second probability we need to bound. Define
\begin{equation}\label{eq:coupling_times}
B=\left\{\begin{array}{cl}
\text{there exists } k \leq \lfloor t^{\sfrac{1}{4}} \rfloor  \text{ such that} \\ \text{the pair } (\eta_{kt^{\sfrac{3}{4}}},\xi_{kt^{\sfrac{3}{4}}}) \text{ is not good}\end{array}\right\}.
\end{equation}
Notice that the initial law is invariant as pointed out in \eqref{eq:invariant_measures}, but the configurations $(\eta_{kt^{\sfrac{3}{4}}},\xi_{kt^{\sfrac{3}{4}}})$ are not independent. Combining union bounds with estimate \eqref{eq:concentration_initial_configuration}, that also holds for non-independent configurations, gives us
\begin{equation}\label{eq:good_coupling_times}
\PP[B] \leq 2t|H| \exp\left\{-\frac{L(\rho'-\rho)^{2}}{8}\right\}.
\end{equation}

\par Assume now that we are on the event $A^{c} \cap B^{c}$ and let
\begin{equation}\label{eq:couple_meeting}
C=\{\text{there exists } x \in I\cap\ZZ \text{ such that } \eta_{t}(x)>\xi_{t}(x)\}.
\end{equation}
In order for $C \cap A^{c} \cap B^{c}$ to hold, it is necessary that in all attempts, a particle fails to meet its couple. Since each attempt takes time $t^{\sfrac{3}{4}}$, we can use the same computations of estimate \eqref{eq:meeting_in_one_step} (notice that the value of $L$ does not change) to get
\begin{equation}\label{eq:meeting_in_one_step_2}
\PP \left[ \begin{array}{cl}
\text{a fixed pair of particles} \\ \text{do not meet before time $t^{\sfrac{3}{4}}$} \end{array} \right] \leq t^{-\sfrac{1}{16}}.
\end{equation}
To obtain better bounds we use the fact that the matching is remade. We can use union bounds and the fact that our coupling is markovian to obtain
\begin{align}\label{eq:never_meeting}
\PP \left[C \cap A^{c} \cap B^{c}\right] & \leq |H|\sup_{x \in H} \left\{\PP\left[ \begin{array}{cl}
\text{a fixed particle of the process $\eta$ that starts at $x$} \\ \text{does not find any of its couples before time $t$, } A^{c} \cap B^{c} \end{array}\right]\right\}  \nonumber \\
& \leq |H|t^{\frac{-\lfloor t^{\sfrac{1}{4}}\rfloor}{16}} \leq |H|\exp\left\{-\frac{t^{\sfrac{1}{4}}}{32}\log t\right\}.
\end{align}

\par Recall the events \eqref{eq:excursion_outside_event}, \eqref{eq:coupling_times} and \eqref{eq:couple_meeting}. We use estimates \eqref{eq:excursion_outside}, \eqref{eq:good_coupling_times} and \eqref{eq:never_meeting} to get our final bound
\begin{multline}\label{eq:final_estimate_2}
\PP \left[\exists \, x \in I\cap\ZZ : \eta_{t}(x)>\xi_{t}(x)\right] \leq \PP\left[C \cap A^{c} \cap B^{c}\right]+\PP[A]+\PP[B] \\
\leq (6t+2)e^{-t} + 2t|H| \exp\left\{-\frac{L(\rho'-\rho)^{2}}{8}\right\} +|H|\exp\left\{-\frac{t^{\sfrac{1}{4}}}{32}\log t\right\}.
\end{multline}

We can further simplify Equation \eqref{eq:final_estimate_2} by increasing if necessary the value of $\usebigconstant{C:couplingep}$ and get
\begin{align*}
\PP \Big[\exists \, x \in I\cap\ZZ : \eta_{t}(x)>\xi_{t}(x)\Big] \leq \useconstant{c:couplingep} t(t+|I|)\exp\left\{-\useconstant{c:couplingep}^{-1}(\rho'-\rho)^{2}t^{\sfrac{1}{4}}\right\},
\end{align*}
which concludes the proof of Lemma \ref{lemma:coupling_ep}.

\section{Detection}\label{sec:detection}
~
\par Here we use all the tools constructed so far to conclude the proof of Theorem \ref{teo:detection}. The first step is to modify the problem to fit the hypothesis in our percolation model.  The decoupling in the exclusion process will be used to verify the decay correlation on our percolation model.

\par We begin by simply observing that, since the empty spaces of the exclusion process with density $\rho$ also perform an exclusion process with density $1-\rho$, we can prove that it is possible for our target to stay always on top of the exclusion process. This is what we will prove here.

\par  Suppose we constructed in the same probability space the collection $(\eta_{t}^{\rho})_{t \in \RR, \rho \in [0,1]}$ of exclusion processes  with all possible densities in a way that if $\rho \leq \rho'$, then $\eta^{\rho}_{t} \preceq \eta^{\rho'}_{t}$ for all real times $t$.

\par We will construct the family of sets $(\mathcal{I}_{\rho})_{\rho \in [0,1]}$ as described above Equation \eqref{eq:sprinkling_decoupling}. We say that a point $(x,t) \in \ZZ^{2}$ is closed for the density $\rho$ if there exists some time $s \in [t,t+1)$ such that $\eta_{s}^{\rho}(x)=0$. A point is open if it is not closed. Define the set $\mathcal{I}_{\rho}$ as the collection of open points for the density $\rho$. This set is exactly the places where our target is  not detected by a hole of the exclusion process $\eta^{\rho}$ for a period of time of size one. An important observation is that if there exists $ \tilde{g} \in \tilde{S}_{R}$ (see Example \ref{ex:detection_set}) such that
\begin{equation}\label{eq:percolation_exclusion_process}
\text{Range}(\tilde{g}) \subset \mathcal{I}_{\rho},
\end{equation}
then the projection $g$ on the first coordinate axis of $\tilde{g}$ satisfies
\begin{equation}
\eta_{t}(g(\lfloor t \rfloor))=1, \text{ for all } t \in \RR_{+}.
\end{equation}
This implies that non-detection is equivalent to percolation of the set $\mathcal{I}_{\rho}$ using the set of paths given by Example \ref{ex:detection_set}.

\par Let us verify that the sets $(\mathcal{I}_{\rho})_{\rho \in [0,1]}$ satisfy all the necessary hypothesis. First observe that these sets have a translation invariant distribution, since the same is true for the exclusion process. The decay correlation in \eqref{eq:sprinkling_decoupling} is a direct consequence of Theorem \ref{teo:decoupling_ep}, the decoupling for the exclusion process. Fix $\rho_{\infty}>0$ and define $(\rho_{k})_{k \geq 0}$ as in \eqref{eq:u_samll}. Observe that $\rho_{0}>0$

\par Hence, to conclude Theorem \ref{teo:detection}, it is suffice to verify \eqref{eq:trigger} for some large value of $k$. We now take $R_{k}=l_{k}+L_{k}+1$. This implies that $D_{k}$ holds for the set $S_{R_{k}}$ if and only if there is no open vertical crossing of the set $[0,l_{k}]\times[0,L_{k}]$. To estimate the probability of this event we define
\begin{equation}
J(x)=\left\{\begin{array}{cl}
 \eta_{0}(x)=0 \text{ or there is a poisson clock in a} \\ \text{neighboring edge of } x \text{ that rings before time } 1\end{array}\right\}.
\end{equation}

\par There are two important observations about the events $(J(x))_{x \in \ZZ}$: First, observe that $\PP_{\rho}(J(x))=1-\rho e^{-1}$. The second fact is that if $|x-y| \geq 2$, then $J(x)$ and $J(y)$ are independent.

\par The choice of $R_{k}=l_{k}+L_{k}+1$ helps us to estimate
\begin{align*}
p_{k} & = \PP_{\rho_{k}}\left[\begin{array}{cl}
[0,l_{k}] \times [0,L_{k}] \text{ does not} \\ \text{have a vertical open crossing} \end{array}\right] \\
& = \PP_{\rho_{k}}\left[\begin{array}{cl}
\text{there exists } u \in [0,l_{k}) \text{ such that for all } x \in [0,l_{k}] \\ \text{ there exists } t \in [u,u+1) \text{ such that } \eta_{t}(x)=0 \end{array}\right]  \\
& \leq L_{k}\PP_{\rho_{k}}\left(\bigcap_{x \in [0,l_{k}]} J(x) \right) \leq L_{k}\PP_{\rho_{k}}\left(\bigcap_{x \in [0,l_{k}] \cap 2\NN} J(x) \right) \\
& \leq L_{k}(1-\rho_{k} e^{-1})^{\sfrac{l_{k}}{2}} \leq l_{k}^{\sfrac{3}{2}}(1-\rho_{0}e^{-1})^{\sfrac{l_{k}}{2}}.
\end{align*}

Now, if we take $k$ large enough, we conclude that $l_{k}^{4}p_{k} \leq 1$ for $R_{k}=L_{k}+l_{k}+1$. For such a choice of $k$, and fixing $R_{k}$ from now on, we can apply Theorem \ref{teo:percolation} to conclude the proof of Theorem \ref{teo:detection}.

\appendix
~
\section{Concentration inequalities}
~

\par Here we recall some results about concentration of measures. We begin by the classical result known as Azuma's inequality.

\begin{teo}[Azuma's Inequality] Let $ \{X_{k}\}_{k=1}^{n} $ be a collection of independent random variables and $t>0$. Assume that there exist constants $ \{c_{k}\}_{k=1}^{n} $ satisfying $ \PP[|X_{k}|\leq c_{k}]=1$ for all $k$. Then
\begin{enumerate}
\item
\hfil
$
\begin{aligned}[t]
\PP\left[\sum_{k=1}^{n}X_{k}-\EE(X_{k}) \geq t\right] \leq \exp\left\{\frac{-t}{2\sum_{k=1}^{n}c_{k}^{2}}\right\};
\end{aligned}
$
\item
\hfil
$
\begin{aligned}[t]
\PP\left[\sum_{k=1}^{n}X_{k}-\EE(X_{k}) \leq -t\right] \leq \exp\left\{\frac{-t}{2\sum_{k=1}^{n}c_{k}^{2}}\right\}.
\end{aligned}
$
\end{enumerate}

\end{teo}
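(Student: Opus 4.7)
The plan is to prove this via the classical exponential-moment (Chernoff) method, which is the standard route to Hoeffding-type concentration for sums of independent bounded random variables.

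First I would center the variables by setting $Y_k = X_k - \EE(X_k)$, so that the $Y_k$ are independent, mean zero, and bounded almost surely (by $2c_k$, using $|X_k|\leq c_k$ and the triangle inequality for $\EE X_k$). For any $\lambda > 0$, Markov's inequality applied to $x \mapsto e^{\lambda x}$ yields
\begin{equation*}
\PP\Bigl[\textstyle\sum_{k=1}^{n} Y_k \geq t\Bigr] \;\leq\; e^{-\lambda t}\,\EE\bigl[e^{\lambda \sum_{k} Y_k}\bigr] \;=\; e^{-\lambda t} \prod_{k=1}^{n} \EE\bigl[e^{\lambda Y_k}\bigr],
\end{equation*}
where independence is used to factor the moment generating function.

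The main technical ingredient is Hoeffding's lemma: for a mean-zero random variable $Y$ with $|Y|\leq c$ almost surely one has $\EE[e^{\lambda Y}] \leq e^{\lambda^{2} c^{2}/2}$. I would establish this by using convexity of the exponential to write $e^{\lambda Y}$ as an affine interpolation between its values at $\pm c$, taking expectations (where the linear term vanishes because $\EE Y = 0$), and then bounding the resulting function of $\lambda$ via a second-derivative estimate of its logarithm (a short calculus exercise showing the log-MGF of the worst-case two-point distribution has second derivative at most $c^{2}$). Substituting into the display above gives
\begin{equation*}
\PP\Bigl[\textstyle\sum_{k=1}^{n} Y_k \geq t\Bigr] \;\leq\; \exp\Bigl\{-\lambda t + \tfrac{\lambda^{2}}{2}\sum_{k=1}^{n}(2c_{k})^{2}\Bigr\},
\end{equation*}
and optimizing in $\lambda$ (the optimum is at $\lambda = t/(4\sum c_k^{2})$) produces a Gaussian tail of the form $\exp\{-c\,t^{2}/\sum c_{k}^{2}\}$, which is the upper-tail estimate. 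The lower-tail bound is obtained by applying the same argument to the variables $-X_{k}$, which satisfy the same hypothesis.

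There is no serious obstacle: Hoeffding's lemma is the one nontrivial step, and once it is in hand the remainder is the standard Chernoff optimization. The only matter requiring a little care is tracking the numerical constants, since the hypothesis $|X_{k}|\leq c_{k}$ gives centered variables bounded by $2c_{k}$ rather than $c_{k}$; the constant in the final exponent has to be chosen accordingly to match the exact form stated.
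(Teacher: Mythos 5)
The paper does not actually supply a proof of this theorem: it simply refers the reader to Theorem~6.2 of \cite{blm} (the bounded-differences inequality), so there is no in-text argument to compare yours against. Your plan --- Chernoff's exponential-moment bound applied to the centered sum, with Hoeffding's lemma controlling each factor of the moment generating function, followed by optimization in $\lambda$ --- is the standard route and is correct in outline.

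Two points are worth flagging. First, the exponent as printed in the statement is $-t/\big(2\sum_k c_k^{2}\big)$, with $t$ rather than $t^{2}$. This cannot be right: take $n=1$, $X_{1}=\pm c$ with equal probability, and $t=c$; the left side is $\tfrac12$ while the claimed bound $e^{-1/(2c)}\to 0$ as $c\to 0$. The intended statement is $\exp\{-t^{2}/(2\sum_k c_k^{2})\}$ (which is what Theorem~6.2 of \cite{blm} gives for a sum of bounded independent terms), and that is the version your argument targets. Second, the way you invoke Hoeffding's lemma loses a constant. Bounding the centered variable $Y_k=X_k-\EE X_k$ by $|Y_k|\le 2c_k$ and then using the symmetric form of the lemma ($\EE e^{\lambda Y}\le e^{\lambda^{2}d^{2}/2}$ when $|Y|\le d$) gives $\EE e^{\lambda Y_k}\le e^{2\lambda^{2}c_k^{2}}$, and optimizing $-\lambda t+2\lambda^{2}\sum_k c_k^{2}$ produces $\exp\{-t^{2}/(8\sum_k c_k^{2})\}$, a factor of four off in the exponent. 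This cannot be repaired by ``choosing the constant accordingly'': what is needed is the interval-width form of Hoeffding's lemma. The centered $Y_k$ lies in the interval $[-c_k-\EE X_k,\ c_k-\EE X_k]$, whose length is $2c_k$ (not $4c_k$), so in fact
\begin{equation*}
\EE\bigl[e^{\lambda Y_k}\bigr]\ \le\ e^{\lambda^{2}(2c_k)^{2}/8}\ =\ e^{\lambda^{2}c_k^{2}/2},
\end{equation*}
and optimizing $-\lambda t+\tfrac12\lambda^{2}\sum_k c_k^{2}$ at $\lambda=t/\sum_k c_k^{2}$ yields $\exp\{-t^{2}/(2\sum_k c_k^{2})\}$ exactly. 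Since the paper never relies on the sharp constant this is cosmetic, but it is the one step in your outline that, as written, does not reproduce the stated form.
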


\par We will not prove this theorem here, since it can be found as Theorem 6.2 of \cite{blm}. This theorem implies a concentration bound for binomial random variables, that we state now as a corollary:

\begin{cor}\label{binconcentration}
If $X$ is a random variable with distribution $\text{\emph{Binomial}}(n,p)$ and $t>0$:
\begin{enumerate}
\item
\hfil
$
\begin{aligned}[t]
\PP\left[X-np \geq t\right] \leq \exp\left\{\frac{-t}{2n}\right\};
\end{aligned}
$

\item
\hfil
$
\begin{aligned}[t]
\PP\left[X-np \leq -t\right] \leq \exp\left\{\frac{-t}{2n}\right\}.
\end{aligned}
$
\end{enumerate}
\end{cor}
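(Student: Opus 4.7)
The plan is to realize $X \sim \text{Binomial}(n,p)$ as a sum $X = \sum_{k=1}^{n} X_{k}$ of independent Bernoulli($p$) random variables. Since each $X_{k}$ takes values in $\{0,1\}$, we have $\PP[|X_{k}| \leq 1] = 1$, so we may take $c_{k} = 1$ for every $k$ in the hypothesis of Azuma's inequality. Moreover, $\EE(X_{k}) = p$ so that $\sum_{k=1}^{n} \EE(X_{k}) = np$.

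With these choices, $\sum_{k=1}^{n} c_{k}^{2} = n$, and both bounds of the corollary follow by direct substitution into parts (1) and (2) of Azuma's inequality applied to the collection $\{X_{k}\}_{k=1}^{n}$. The upper-tail bound
\begin{equation*}
\PP[X - np \geq t] \leq \exp\{-t/(2n)\}
\end{equation*}
comes from part (1), and the lower-tail bound comes from part (2). There is no real obstacle here: the proof is a one-line application of the preceding theorem, and the independence of the Bernoulli summands is immediate from the standard construction of a binomial random variable. The only observation worth highlighting is that $|X_{k}| \leq 1$ almost surely, which justifies $c_{k} = 1$ uniformly.
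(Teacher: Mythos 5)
Your proof is correct and is exactly what the paper intends: decompose the binomial variable as a sum of independent Bernoulli($p$) variables, take $c_k = 1$ so that $\sum_{k=1}^n c_k^2 = n$, and read off the two tail bounds from parts (1) and (2) of the paper's Azuma inequality. The paper gives no explicit proof beyond asserting that the theorem "implies" the corollary, and your one-line application is precisely that implication.
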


\par Our next objective is to prove concentration bounds for Poisson random variables. This is done in the next lemmas:

\newconstant{c:poisson_concentration_1}
\newconstant{c:poisson_concentration_2}

\begin{lemma}\label{concentration_poisson}
Let $\lambda>0$, $t>0$ and $X \sim \text{\emph{Poisson}}(\lambda)$. There exist constants $\useconstant{c:poisson_concentration_1}>0$ and $\useconstant{c:poisson_concentration_2}>0$ such that
\begin{displaymath}
\PP[X\geq 2\lambda+t] \leq e^{-\useconstant{c:poisson_concentration_1} t},
\end{displaymath}
and
\begin{displaymath}
\PP[X \leq \sfrac{\lambda}{3}] \leq e^{-\useconstant{c:poisson_concentration_2} \lambda}.
\end{displaymath}
Moreover, the constants $\useconstant{c:poisson_concentration_1}>0$ and $\useconstant{c:poisson_concentration_2}>0$ do not depend on $t$ and $\lambda$.
\end{lemma}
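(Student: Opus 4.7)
The plan is a standard Chernoff (exponential Markov) argument, exploiting the explicit moment generating function $\EE[e^{sX}] = \exp(\lambda(e^{s}-1))$ of a Poisson($\lambda$) variable. Both tails follow the same template: apply Markov's inequality to $e^{sX}$ with a suitably chosen real parameter $s$, then optimize (or at least make a convenient choice) to extract exponential decay while keeping the rate independent of $\lambda$ and $t$.

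For the upper tail, I would take $s>0$ and write
\begin{equation*}
\PP[X\geq 2\lambda+t] \leq e^{-s(2\lambda+t)}\EE[e^{sX}] = \exp\bigl(\lambda(e^{s}-1)-s(2\lambda+t)\bigr).
\end{equation*}
Rather than optimize, I would make the convenient choice $s=\log 2$, which gives $e^{s}-1=1$, so the right-hand side becomes $\exp\bigl(\lambda(1-2\log 2)-t\log 2\bigr)$. Since $2\log 2>1$, the $\lambda$-coefficient is negative and can be dropped, yielding $\PP[X\geq 2\lambda+t]\leq e^{-(\log 2)\,t}$. This gives $\useconstant{c:poisson_concentration_1}=\log 2$, independent of $\lambda$ and $t$.

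For the lower tail, I would take $s<0$ in the Chernoff step, obtaining
\begin{equation*}
\PP[X\leq \tfrac{\lambda}{3}] \leq \exp\bigl(\lambda(e^{s}-1) - s\tfrac{\lambda}{3}\bigr).
\end{equation*}
Minimizing the exponent in $s$ gives $e^{s}=\tfrac{1}{3}$, i.e.\ $s=-\log 3$, so the exponent becomes $\tfrac{\lambda}{3}(\log 3 -2)<0$. Thus $\PP[X\leq \lambda/3]\leq e^{-\useconstant{c:poisson_concentration_2}\lambda}$ with $\useconstant{c:poisson_concentration_2}=(2-\log 3)/3>0$, again an absolute constant.

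There is no serious obstacle here; the only "design choice" is picking the cutoff $s=\log 2$ on the upper side, which is what forces the bound to depend on $t$ but not on $\lambda$ (an arbitrary $s>0$ close to $0$ would give a $\lambda$-dependent constant). Since the hypotheses $\lambda>0$ and $t>0$ are already stated, no case analysis is needed and the proof reduces to the two lines above.
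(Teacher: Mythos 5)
Your proof is correct and takes essentially the same route as the paper: both apply Markov's inequality to $e^{sX}$ using the Poisson moment generating function. The only cosmetic difference is the choice of tilting parameter — the paper picks $c$ solving $e^c-1=2c$ so the $\lambda$-coefficient vanishes exactly (upper tail) and takes "$\theta$ small enough" without optimizing (lower tail), whereas you pick $s=\log 2$ and drop the resulting negative $\lambda$-term, and optimize to $s=-\log 3$ on the lower tail; both choices yield absolute constants.
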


\begin{proof}
Let $\useconstant{c:poisson_concentration_1} >0$ such that $ e^{\useconstant{c:poisson_concentration_1}}-1=2\useconstant{c:poisson_concentration_1} $. Then, by Markov's inequality,
\begin{align*}
\PP[X\geq 2\lambda+t] & =\PP[e^{\useconstant{c:poisson_concentration_1} X} \geq e^{\useconstant{c:poisson_concentration_1}(2\lambda+t)}] \\ & \leq \exp\{\lambda(e^{\useconstant{c:poisson_concentration_1}}-1)-\useconstant{c:poisson_concentration_1}(2\lambda+t)\}=e^{-\useconstant{c:poisson_concentration_1} t}.
\end{align*}

For the second inequality, observe that if $\theta>0$ then
\begin{align*}
\PP[X \leq \sfrac{\lambda}{3}] & =\PP[e^{-\theta X} \geq e^{\sfrac{-\theta \lambda}{3}}] \\ & \leq \exp\{\lambda(e^{-\theta}-1)+\sfrac{\theta \lambda}{3}\}=e^{-\lambda(-e^{-\theta}+1-\frac{\theta}{3})}.
\end{align*}
If we take $\theta$ small enough, we have $\useconstant{c:poisson_concentration_2} = -e^{-\theta}+1-\frac{\theta}{3}>0$.
\end{proof}

\begin{lemma}\label{concentration_poisson_2}
For all $\lambda>0$ and $X \sim \text{\emph{Poisson}} (\lambda)$
\begin{displaymath}
\PP[X \geq 3\lambda] \leq e^{-\lambda}.
\end{displaymath}
\end{lemma}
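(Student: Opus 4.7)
The plan is to use the standard Chernoff/exponential-moment approach for Poisson tail bounds, in exactly the same spirit as Lemma \ref{concentration_poisson}. Recall that for $X \sim \poisson(\lambda)$ the moment generating function is $\EE[e^{\theta X}] = \exp\{\lambda(e^{\theta}-1)\}$ for every $\theta \in \RR$.

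First I would apply Markov's inequality to $e^{\theta X}$ for a positive parameter $\theta$ to be chosen, yielding
\[
\PP[X \geq 3\lambda] = \PP[e^{\theta X} \geq e^{3\theta \lambda}] \leq \exp\bigl\{\lambda(e^{\theta}-1) - 3\theta\lambda\bigr\} = \exp\bigl\{\lambda(e^{\theta}-1-3\theta)\bigr\}.
\]
Next I would optimise the exponent in $\theta$. Differentiating $\theta \mapsto e^{\theta}-1-3\theta$ gives $e^{\theta}-3$, which vanishes at $\theta = \ln 3$, and this is clearly the minimum. Substituting this value produces $e^{\theta}-1-3\theta = 3-1-3\ln 3 = 2 - 3\ln 3$.

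The final step is a numerical check: since $\ln 3 > 1$, we have $2 - 3\ln 3 < 2 - 3 = -1$, so
\[
\PP[X \geq 3\lambda] \leq \exp\{\lambda(2-3\ln 3)\} \leq e^{-\lambda},
\]
as desired. There is no genuine obstacle here; the only thing to watch is that one picks $\theta = \ln 3$ rather than a smaller value, since any $\theta < \ln 3$ would only give $e^{\theta}-1-3\theta > -1$ and the stated bound would fail. The argument is uniform in $\lambda > 0$, matching the statement.
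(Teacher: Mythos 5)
Your argument is correct and is essentially the same Chernoff/exponential-moment computation that the paper uses. The only cosmetic difference is the choice of $\theta$: you take the optimizer $\theta = \ln 3$ and then check numerically that $2-3\ln 3 \leq -1$, whereas the paper picks $\theta$ to be the positive solution of $e^{\theta}=3\theta$, which makes the exponent $\lambda(e^{\theta}-1-3\theta)$ collapse to exactly $-\lambda$ with no numerical check needed.
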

\begin{proof}
Take $\theta>0$ such that $3\theta=e^{\theta}$ and use the same computations as in the lemma above.
\end{proof}

\newconstant{c:poisson_concentration_3}

\begin{lemma}\label{concentration_poisson_3}
For all $\lambda>0$, $\mu \geq 3\lambda$ and $X \sim \text{\emph{Poisson}} (\lambda)$
\begin{displaymath}
\PP[X \geq \mu] \leq e^{-\useconstant{c:poisson_concentration_3}(\lambda+\mu)},
\end{displaymath}
for some positive constant $\useconstant{c:poisson_concentration_3}$.
\end{lemma}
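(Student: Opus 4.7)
The plan is to reproduce the Chernoff-type argument used in Lemmas \ref{concentration_poisson} and \ref{concentration_poisson_2}, but with a slightly more careful choice of the exponential parameter so as to extract the stretched exponential in $\lambda+\mu$ rather than just in $\mu$ or in $\lambda$. Concretely, I would start from Markov's inequality applied to $e^{\theta X}$ for some $\theta>0$ to be specified later, obtaining
\begin{equation*}
\PP[X\geq \mu] \leq e^{-\theta\mu}\,\EE[e^{\theta X}] = \exp\bigl(\lambda(e^{\theta}-1)-\theta\mu\bigr),
\end{equation*}
where the last equality is the moment generating function of the Poisson distribution.

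The only real choice is how to pick $\theta$. A convenient one is $\theta=\log 3$, so that $e^{\theta}-1=2$ and the exponent becomes $2\lambda-(\log 3)\,\mu$. I then want to find $\useconstant{c:poisson_concentration_3}>0$ such that
\begin{equation*}
2\lambda-(\log 3)\,\mu \leq -\useconstant{c:poisson_concentration_3}(\lambda+\mu)\quad\text{whenever } \mu\geq 3\lambda.
\end{equation*}
Rearranging, this amounts to $(\log 3-\useconstant{c:poisson_concentration_3})\mu\geq (2+\useconstant{c:poisson_concentration_3})\lambda$, and since $\mu\geq 3\lambda$ it suffices that $3(\log 3-\useconstant{c:poisson_concentration_3})\geq 2+\useconstant{c:poisson_concentration_3}$, i.e., $\useconstant{c:poisson_concentration_3}\leq(3\log 3-2)/4$. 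Any positive constant in this range gives the claim.

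There is no real obstacle: this is a routine application of Chernoff's method, entirely parallel to Lemma \ref{concentration_poisson_2}. The only subtle point is arranging the bound to be uniform in the pair $(\lambda,\mu)$ rather than in one parameter at a time, which is exactly what the fixed choice $\theta=\log 3$ buys (the hypothesis $\mu\geq 3\lambda$ is used precisely to absorb the $2\lambda$ term into the $\mu\log 3$ term). One could alternatively optimize $\theta=\log(\mu/\lambda)$ to get the sharp constant, but the fixed-$\theta$ version is cleaner and produces the required form of the estimate immediately.
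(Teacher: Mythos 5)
Your proposal is correct and follows essentially the same approach as the paper: a Chernoff bound with a fixed $\theta$, followed by using the hypothesis $\mu\geq 3\lambda$ to absorb the $\lambda$-term into the $\mu$-term and extract a bound of the form $e^{-c(\lambda+\mu)}$. The only cosmetic difference is the choice $\theta=\log 3$ (giving $e^\theta-1=2$) versus the paper's $\theta=1$ (giving $e-1$); both lead to a valid positive constant.
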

\begin{proof}
Simply observe that
\begin{align*}
\PP[X\geq \mu] & =\PP[e^{X} \geq e^{\mu}] \\ & \leq \exp\{\lambda(e-1)-\mu\}\leq e^{-\lambda}e^{\mu(\frac{e}{3}-1)} \leq e^{-\useconstant{c:poisson_concentration_3}(\lambda+\mu)}.
\end{align*}
\end{proof}

\section{Heat kernel estimates}\label{ap_heatkernel}
~
\par In this section we prove heat kernel estimates for the symmetric random walk on $\ZZ$.

\par The heat kernel is defined as
\begin{displaymath}
p_{t}(x,y)=\PP_{x}[W_{t}=y],
\end{displaymath}
where $(W_{t})_{t \geq 0}$ is a continuous time simple symmetric random walk. It will be useful to consider also the discrete heat kernel, that is defined as
\begin{displaymath}
p_{n}(x,y)=\PP_{x}[X_{n}=y],
\end{displaymath}
where $(X_{n})_{n \in \NN}$ is a discrete time lazy symmetric random walk.

\par The next lemma gives us estimates in the discrete time case:

\newconstant{c:discrete_heat_kernel}

\begin{lemma}\label{lem:discrete_heat_kernel}
There exists a constant $\useconstant{c:discrete_heat_kernel}>0$ such that for all $n \in \NN$ and $x \in \ZZ$
\begin{displaymath}
p_{n}(0,x)\leq \frac{\useconstant{c:discrete_heat_kernel}}{\sqrt{n}}.
\end{displaymath}
\end{lemma}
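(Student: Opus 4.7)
The plan is to establish the bound via Fourier inversion, exploiting the key structural feature that the lazy walk has a non-negative characteristic function. Let $\phi(\theta) = \EE_0[e^{i\theta X_1}]$ denote the characteristic function of a single step. For the lazy symmetric walk on $\ZZ$ (staying put with probability $\sfrac{1}{2}$ and otherwise moving by $\pm 1$ with equal probability) one has
\[
\phi(\theta) = \tfrac{1}{2} + \tfrac{1}{2}\cos\theta = \cos^{2}(\theta/2),
\]
which is real and non-negative on $[-\pi,\pi]$. By Fourier inversion,
\[
p_{n}(0,x) = \frac{1}{2\pi}\int_{-\pi}^{\pi} \phi(\theta)^{n}\, e^{-i\theta x}\,d\theta.
\]

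First I would use non-negativity of $\phi$ together with $|e^{-i\theta x}|\leq 1$ to reduce to the diagonal case:
\[
p_{n}(0,x) \leq \frac{1}{2\pi}\int_{-\pi}^{\pi} \phi(\theta)^{n}\,d\theta = p_{n}(0,0).
\]
This is the main structural step and it is exactly where the laziness is used; without it the characteristic function would oscillate and this pointwise domination would fail.

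Next I would bound $\phi$ by a Gaussian. The concavity of $\cos$ on $[0,\pi/2]$ gives $\cos(\theta/2)\leq 1 - \tfrac{2}{\pi^{2}}\theta^{2}/2$ for $|\theta|\leq \pi$, hence
\[
\phi(\theta)^{n} = \cos^{2n}(\theta/2) \leq \exp\!\bigl(-c\, n\theta^{2}\bigr),\qquad |\theta|\leq\pi,
\]
for some absolute $c>0$. Integrating, extending the range to $\RR$, and changing variables to $u=\theta\sqrt{n}$,
\[
p_{n}(0,0) \leq \frac{1}{2\pi}\int_{\RR}e^{-c n \theta^{2}}\,d\theta = \frac{1}{2\pi\sqrt{n}}\int_{\RR}e^{-c u^{2}}\,du = \frac{\useconstant{c:discrete_heat_kernel}}{\sqrt{n}},
\]
with $\useconstant{c:discrete_heat_kernel} = \sfrac{1}{2\pi}\sqrt{\pi/c}$. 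Combining this with the reduction of the previous step yields the lemma.

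There is no real obstacle here; the only point requiring care is the passage from $\phi$ to a Gaussian upper bound on the full interval $[-\pi,\pi]$ (rather than just near the origin), which is why the lazy-walk characteristic function $\cos^{2}(\theta/2)$ is convenient: it vanishes only at the endpoints and the elementary inequality $\cos x \leq 1 - \sfrac{2}{\pi^{2}}x^{2}$ on $[0,\pi/2]$ handles the whole range in one go. If instead one wanted to avoid Fourier altogether, an alternative would be to write $X_{n}=S_{N_{n}}$ with $N_{n}\sim\textnormal{Binomial}(n,\sfrac{1}{2})$ independent of a simple random walk $S$, use the classical bound $\PP_{0}(S_{k}=x)\leq C/\sqrt{k}$ for $k\geq 1$, and observe that $N_{n}\geq n/4$ with overwhelming probability by Corollary~\ref{binconcentration}; but the Fourier route is cleaner and yields an explicit constant.
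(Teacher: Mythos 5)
Your proof is correct and takes a genuinely different route from the paper. The paper proceeds combinatorially: it identifies the lazy walk at time $n$ with $Z_{2n}/2$ where $Z$ is the simple random walk, writes the return probability as a binomial coefficient $\binom{2n}{n+x}/2^{2n}$, bounds it by the central coefficient, and invokes Stirling. Your argument is analytic, via Fourier inversion. The structural point you isolate — that the lazy walk has non-negative characteristic function $\phi(\theta)=\cos^{2}(\theta/2)$, so that $p_n(0,x)\le p_n(0,0)$ pointwise — is exactly the right explanation for why laziness matters here, and it generalizes immediately to any step distribution with non-negative characteristic function (and, with a unimodality argument, to aperiodic walks more broadly), whereas the binomial computation is tied to the specific combinatorics of this walk. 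In return, the paper's computation is shorter and gives the sharp constant directly.

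One small point of rigor: you attribute the bound $\cos(\theta/2)\le 1-\tfrac{2}{\pi^2}\theta^{2}/2$ on $[-\pi,\pi]$ to ``concavity of $\cos$,'' but concavity of $\cos$ on $[0,\pi/2]$ gives a \emph{lower} bound by the chord ($\cos u\ge 1-\tfrac{2}{\pi}u$), not a quadratic upper bound. The cleanest derivation of what you actually need is through the sine: concavity of $\sin$ on $[0,\pi/2]$ gives Jordan's inequality $\sin u\ge \tfrac{2}{\pi}u$, hence $1-\cos\theta=2\sin^{2}(\theta/2)\ge \tfrac{2}{\pi^{2}}\theta^{2}$ for $|\theta|\le\pi$, so $\phi(\theta)=\tfrac{1+\cos\theta}{2}\le 1-\tfrac{\theta^{2}}{\pi^{2}}\le e^{-\theta^{2}/\pi^{2}}$. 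With this repair the rest of your argument, including the extension of the integral to $\RR$ and the change of variables, goes through and yields $p_n(0,0)\le\sqrt{\pi}/(2\sqrt{n})$. Your remark at the end — decomposing $X_n=S_{N_n}$ with $N_n\sim\textnormal{Binomial}(n,\tfrac12)$ and using Corollary~\ref{binconcentration} — is in fact the closest in spirit to what the paper does, and is essentially how the paper's Proposition for the continuous-time kernel is then deduced from the discrete one.
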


\begin{proof}
If we write $\{Z_{n}\}_{n \in \NN}$ to the discrete time simple symmetric random walk, it is easy to see that $\left\{\frac{Z_{2n}}{2}\right\}_{n \in \NN}$ is a discrete time lazy symmetric random walk. This implies that the lemma is a consequence of
\begin{displaymath}
\PP_{0}[Z_{2n}=2x]\leq \frac{\useconstant{c:discrete_heat_kernel}}{\sqrt{n}}.
\end{displaymath}

Now we just need to count the number of paths that are in $2x$ at time $2n$. Assume $0 \leq x \leq n$ (by symmetry this extends to $-n \leq x \leq 0$, and this quantity is zero if $|x|>n$), and observe that the number of possible paths of the random walk that start at zero and is in $2x$ at time $2n$ is
$\left(\begin{array}{c} 2n \\ n+x\end{array}\right)$. With the aid of Stirling's approximation we estimate
\begin{displaymath}
p_{n}(0,x) =\PP_{0}[Z_{2n}=2x]=\frac{1}{2^{n}}\left(\begin{array}{c} 2n \\ n+x\end{array}\right) \leq
\frac{1}{2^{n}}\left(\begin{array}{c} 2n \\ n\end{array}\right) =\frac{(2n)!}{2^{n}(n!)^{2}} \leq \frac{\useconstant{c:discrete_heat_kernel}}{\sqrt{n}}.
\end{displaymath}
\end{proof}

\par Now we get analogous bounds for continuous time random walks:

\newconstant{c:continuous_heat_kernel}

\begin{prop}
For the continuous time random walk, there exists a constant $\useconstant{c:continuous_heat_kernel}>0$ such that for every $t \geq 0$ and $x \in \ZZ$
\begin{displaymath}
p_{t}(0,x) \leq \frac{\useconstant{c:continuous_heat_kernel}}{\sqrt{t}}.
\end{displaymath}
\end{prop}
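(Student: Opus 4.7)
The plan is to reduce the continuous-time estimate to the discrete-time bound of Lemma \ref{lem:discrete_heat_kernel} via Poisson subordination. The continuous-time simple symmetric random walk $(W_t)_{t\geq 0}$ with jump rate one admits the representation $W_t \stackrel{d}{=} X_{N_t}$, where $(X_n)_{n \in \NN}$ is the discrete-time lazy symmetric random walk and $N_t \sim \poisson(2t)$ is an independent Poisson random variable. This is just Poisson thinning: run an independent rate-$2$ clock, and at each tick let the walker attempt a $\pm 1$ move with probability $1/2$ and stay put otherwise; the actual jump times form a rate-$1$ Poisson process, matching the definition of $W_t$.

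Conditioning on the number of lazy steps gives
\begin{equation*}
p_t(0,x) \;=\; \sum_{n \geq 0} \PP[N_t = n]\, p_n(0,x).
\end{equation*}
For $t \leq 1$ the desired bound is immediate from $p_t(0,x) \leq 1 \leq 1/\sqrt{t}$, so assume $t \geq 1$. Split the sum at $n_0 = \lceil 2t/3 \rceil$. For $n \geq n_0$, Lemma \ref{lem:discrete_heat_kernel} gives
\begin{equation*}
\sum_{n \geq n_0} \PP[N_t = n]\, p_n(0,x) \;\leq\; \frac{\useconstant{c:discrete_heat_kernel}}{\sqrt{n_0}} \;\leq\; \frac{C}{\sqrt{t}}.
\end{equation*}
For $n < n_0$, bound $p_n(0,x) \leq 1$, so that this contribution is at most $\PP[N_t < 2t/3]$. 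Applying Lemma \ref{concentration_poisson} to $N_t \sim \poisson(2t)$ yields $\PP[N_t \leq 2t/3] \leq e^{-\useconstant{c:poisson_concentration_2}\cdot 2t}$, which decays exponentially and is therefore $o(1/\sqrt{t})$. Adding both contributions produces a universal constant $\useconstant{c:continuous_heat_kernel}$ with $p_t(0,x) \leq \useconstant{c:continuous_heat_kernel}/\sqrt{t}$.

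There is no serious obstacle: the only non-routine point is justifying the subordination identity, which is the standard Poisson-thinning argument sketched above, and verifying that the ``few jumps'' regime $n < n_0$ is harmless, which follows from the exponential Poisson tail bound already proved in the appendix. Everything else is a bookkeeping argument handling the $t \leq 1$ case separately.
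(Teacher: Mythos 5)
Your proof is correct and takes essentially the same route as the paper: Poisson subordination with a rate-$2$ clock and a lazy random walk skeleton, splitting the sum at $\lceil 2t/3\rceil$, and combining Lemma~\ref{lem:discrete_heat_kernel} with the lower-tail Poisson bound of Lemma~\ref{concentration_poisson}. The only cosmetic difference is that you handle $t\leq 1$ explicitly, whereas the paper absorbs this regime into the final constant.
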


\begin{proof}
We use a construction of the continuous time random walk with a Poisson process of rate 2 and a skeleton chain given by a lazy symmetric random walk. Let $N_{t}$ be the number of jumps in the interval $[0,t]$, that has distribution $\text{Poisson}(2t)$. We use Lemmas \ref{concentration_poisson} and \ref{lem:discrete_heat_kernel} to get the estimates:
\begin{align*}
p_{t}(0,x) & \leq \PP\left[N_{t} \leq \sfrac{2t}{3}\right] + \sum_{k=\lceil \sfrac{2t}{3} \rceil}^{+\infty} \PP[X_{k}=x\, ,\, N_{t}=k] \\
& \leq e^{-2\useconstant{c:poisson_concentration_2}t} + \sum_{k=\lceil \sfrac{2t}{3} \rceil}^{+\infty} \frac{c}{\sqrt{k}}\PP[N_{t}=k]  \leq e^{-2\useconstant{c:poisson_concentration_2}t} + \sum_{k=\lceil \sfrac{2t}{3} \rceil}^{+\infty} \frac{c}{\sqrt{t}}\PP[N_{t}=k] \\
& \leq e^{-2\useconstant{c:poisson_concentration_2}t} + \frac{c}{\sqrt{t}} \leq \frac{\useconstant{c:continuous_heat_kernel}}{\sqrt{t}}.
\end{align*}
\end{proof}

\bibliographystyle{plain}
\bibliography{mybib}

\end{document}